\documentclass[leqno]{scrartcl}
\usepackage[title,titletoc]{appendix}
\usepackage{amsmath, amsthm,amsfonts, amssymb}
\numberwithin{equation}{section}
\usepackage{mathtools}
\usepackage{thmtools}
\usepackage{stmaryrd}
\usepackage{todonotes}
\usepackage{mathrsfs}
\usepackage{enumitem}
\usepackage{comment}
\usepackage{pgfplots}
\pgfplotsset{compat=1.18}
\usepackage{tikz-cd}
\usepackage[hidelinks]{hyperref}
\usepackage[nameinlink]{cleveref}
\usepackage{lmodern}
\usepackage{microtype}
\usepackage{pgfplotstable}

\usepackage[
  backend=biber,
  style=authoryear,
  sorting=nyt,
  maxcitenames=2,
  maxbibnames=99,
  giveninits=true,
  uniquename=init
]{biblatex}
\DeclareDelimFormat{nameyeardelim}{\addcomma\space}
\newcommand{\from}{\colon}
\newcommand{\RR}{\mathbb{R}}
\newcommand{\CC}{\mathbb{C}}

\newcommand{\SSS}{\mathbb{S}}

\newcommand{\lie}[1]{\mathfrak{#1}}
\newcommand{\ljump}{\llbracket {}}
\newcommand{\rjump}{\rrbracket {}}
\newcommand{\harmonic}{\mathscr H}
\newcommand{\tangent}{\mathrm{tan}}
\newcommand{\normal}{\mathrm{nor}}

\DeclareMathOperator{\Ad}{Ad}

\DeclareMathOperator{\supp}{\supp}

\newcommand{\vol}{ \mathrm{vol}}

\DeclareMathOperator{\im}{im}

\theoremstyle{definition}
\newtheorem{definition}{Definition}[section]

\newlength{\forallsep}
\newcommand{\forallspace}{\hspace{\forallsep}}

\theoremstyle{plain}
\newtheorem{thm}[definition]{Theorem}
\newtheorem{lemma}[definition]{Lemma}
\newtheorem{prop}[definition]{Proposition}
\newtheorem{cor}[definition]{Corollary}
\newtheorem{remark}[definition]{Remark}

\RedeclareSectionCommand[
  afterskip=-.5em,
  font=\normalfont\bfseries
]{subsection}

\title{Finite element discretization of Yang--Mills connections}

\title{Finite element discretization of Yang--Mills connections}

\author{%
  \textsc{\normalsize Geir Bogfjellmo}\\
  {\small\itshape
    Department of Mathematics, Norwegian University of Life Sciences}\\
  {\small\itshape
    P.O. Box 5003, 1432 Ås, Norway}\\[0.4em]
  \textsc{\normalsize Charles Curry}\\
  {\small\itshape
    Department of Mathematical Sciences,
    Norwegian University of Science and Technology}\\
  {\small\itshape
    P.O. Box 191, 2802 Gjøvik, Norway}\\[0.4em]
   \textsc{\normalsize and}\\[0.4em]
  \textsc{\normalsize Andreas Myklebust*}\\%
  {\small\itshape
    Department of Mathematics, Norwegian University of Life Sciences}\\
  {\small\itshape
    P.O. Box 5003, 1432 Ås, Norway}\\
    {\normalfont\small *Corresponding author: andreas.myklebust@nmbu.no}
}

\date{}
\begin{document}\maketitle

\begin{abstract}
    We propose a finite element method for Yang--Mills connections on nontrivial principal bundles with abelian structure group. Local connection forms satisfy internal jump conditions induced by the transition functions of the bundle. We discretize these forms in broken finite element exterior calculus spaces and enforce the jump conditions weakly by Lagrange multipliers. We prove well-posedness of the resulting saddle-point problem and derive an a priori convergence estimate. A numerical experiment for the Hopf fibration illustrates the method and exhibits linear convergence when the sphere is approximated by a piecewise linear mesh.
\end{abstract}

\section{Introduction}
    Connections on principal bundles are fundamental objects in mathematics and physics. In differential geometry, they encode parallel transport, while in physics they appear as gauge fields. For a connection on a (pseudo-)Riemannian manifold, the Yang--Mills functional is a measure of the total curvature of the connection. The critical points of this functional are the Yang--Mills connections. The basic example in physics comes from electromagnetism, where solutions to Maxwell's equations correspond to Yang--Mills connections.
    
    We investigate finite element methods for computing Yang--Mills connections. Such methods have been studied previously \parencite{berchenko2021charge,christiansen2006constraint}. Christiansen and Winther explicitly noted that ``Extension to nontrivial principal bundles seems to be a challenging geometric problem and is under investigation'' \parencite[p.~99]{christiansen2006constraint}. Our contribution addresses this challenge for abelian structure groups.

    A direct discretization poses a dimensionality challenge: a connection on a principal $G$-bundle $\pi\from P\to M$ is a $\lie{g}$-valued 1-form on $P$, so discretizing it there quickly encounters the curse of dimensionality in most interesting cases.
    However, the conditions required for a 1-form to be a connection mean that the connection is fully determined by a collection of locally defined $\lie{g}$-valued 1-forms $\omega_i$, one on each trivializing chart $U_i \subset M$. On overlaps, these 1-forms are related by gauge transformations involving the transition functions between the charts. Any finite element discretization must respect this structure and, in particular, must handle the jump conditions imposed by the transition functions across element boundaries.

    The framework of Finite Element Exterior Calculus (FEEC) \parencite{arnold2006finite,arnold2018finite} provides a natural language for discretizing differential forms and their exterior derivatives.

    The key idea of this paper is to work with ``broken'' or ``discontinuous'' FEEC spaces---defined locally on each cell of a triangulation---and to enforce the gauge-compatibility jump conditions weakly via Lagrange multipliers, in the spirit of hybridized finite element methods \parencite{AriHybrid}. Concretely, given a triangulation $\mathcal{T}$ of $M$ and local sections $\sigma_i\from U_i\to P$, we solve the Yang--Mills equation locally on each cell $T_i\subset U_i$ and enforce the tangential and normal jump conditions prescribed by the transition functions $g_{ij}$ on each facet.

    In the present paper, we restrict the problem to the case where the structure group $G$ is abelian. This class already includes physically important gauge theories, most notably electromagnetism, where $G = \mathrm U(1)$. It does not, however, include many other important theories in physics involving groups such as $\mathrm{SU} (2)$ or $\mathrm{SU}(3)$. This assumption makes the Yang--Mills functional quadratic and simplifies the jump conditions, yielding a linear saddle-point problem that can be analyzed in the standard mixed-method framework \parencite{boffi2013mixed}.

    The analysis proceeds from the geometric jump conditions to an infinite-dimensional saddle-point formulation and then to its finite element discretization. After proving well-posedness, we derive an a priori error estimate and validate the method numerically on the Hopf fibration $\pi\from \SSS^3\to \SSS^2$.

\section{Yang--Mills connections}
    This section introduces some background material. Most of the definitions and terminology follow standard treatments \parencite{hamilton2017mathematical,BleeckerDavid1981Gtav,KN61vol1}. Throughout, $M$ is an $n$-dimensional oriented, compact, connected Riemannian manifold without boundary, with metric $h(\cdot,\cdot)$. We also let $G$ be an $a$-dimensional compact Lie group with Lie algebra $\lie{g}=T_eG$, equipped with an $\Ad$-invariant inner product $\langle\cdot,\cdot\rangle_{\lie g}$.
\subsection{Principal bundles.}
    A principal $G$-bundle, $\pi \colon P\rightarrow M$, is a fiber bundle where $G$ acts fiberwise on $P$ from the right. For any $g\in G$, define $r_g\colon P \to P$ by $r_g(p) = p\cdot g$. A \emph{local section} on a domain $U\subset M$ is a smooth function $\sigma\from U\to P$ such that $\pi\circ \sigma\from U\to M$ is the identity on $U$. 
    
    A nontrivial principal bundle is characterized by the nonexistence of a \emph{global section $\sigma\from M\to P$.}
    
    If we have two overlapping domains $U_\alpha $ and $U_\beta$ in $M$ with corresponding sections $\sigma_\alpha$ and $\sigma_\beta$, there exists a \emph{transition function} $g_{\alpha \beta}\colon U_\alpha \cap U_\beta \to G$ such that
    \begin{equation}\label{eq: transition function}
    \sigma_\beta(x) = \sigma_\alpha(x) \cdot g_{\alpha \beta}(x) \quad \forall x\in U_\alpha\cap U_\beta  .
    \end{equation}
    The transition functions can be defined for any two overlapping domains, and they will satisfy the identities 
    $(g_{\alpha \beta})^{-1}(x) = g_{\beta \alpha }(x)$ and $g_{\alpha\gamma}=g_{\alpha\beta}\cdot g_{\beta\gamma}$.
     \subsection{Lie algebra-valued forms.}
    Let  $C^\infty \Lambda^k(M,\lie g) $ denote the set of smooth $k$-forms on $M$ with values in $\lie g$. If $\{e_1, \dotsc, e_a\}$ is a basis for $ \lie g$, any $k$-form 
    $\tau\in C^\infty \Lambda^k(M,\lie g) $ can be written as $\tau = \sum_j \tau^je_j$ with $\tau^j \in C^\infty \Lambda^k(M)$. 
    We define the \emph{wedge bracket} of two forms $\tau\in C^\infty\Lambda^k(M, \lie{g})$ and $ \varphi \in C^\infty\Lambda^\ell(M, \lie{g})$, to be the $(k+\ell)$-form
    \[
        [ \tau \wedge \varphi](X_1, \dots, X_{k+\ell}) = \sum_{i,j}\tau^i \wedge \varphi ^j(X_1, \dots, X_{k+\ell}) [e_i, e_j].
    \]
    Notice that when $G$ is abelian, $[\tau \wedge \varphi ] = 0 $.
    
    We define the pointwise inner product of two Lie algebra-valued forms $\tau_x, \varphi_x \in \Lambda^k(T_x^\ast M)\otimes\lie{g}$ to be
    \begin{equation}\label{eq: innerprod}
        \langle \tau_x, \varphi_x \rangle_x = \sum_{i,j} \langle \tau^i_x, \varphi^j_x\rangle_h \langle e_i, e_j\rangle_{\lie{g}},
    \end{equation}
    where $\langle \cdot, \cdot \rangle_h$ is the inner product of $k$-forms derived from the Riemannian metric, and $\langle \cdot, \cdot\rangle_{\lie{g}}$ is the $\Ad$-invariant inner product on $\lie{g}$. 
    The \emph{exterior derivative} $d\colon C^\infty\Lambda^k(M, \lie g ) \to C^\infty\Lambda^{k+1}(M, \lie g ) $ is defined by taking the ordinary exterior derivative of each component,
    \[
        d\tau = \sum_j (d\tau^j)e_j. 
    \]
    The  \emph{Hodge star operator} $\star \from  C^\infty \Lambda^k(M,\lie g) \to  C^\infty \Lambda^{n-k}(M,\lie g)$ is defined by taking the regular Hodge star component-wise
    \[
        \star \tau  = \sum_j (\star\tau^j)e_j.
    \]
    The \emph{codifferential} $\delta \colon C^\infty \Lambda^{k+1}(M) \to C^\infty \Lambda^{k}(M)$ is defined by \[\delta\tau = (-1)^{nk +1}\star d \star\tau . \]
\subsection{Connections on a principal bundle.}
  
    A \emph{connection} of the principal $G$-bundle $\pi\from P \to M$ is a 1-form $\omega \from TP\to \lie{g}$ which at every point $p \in P$ satisfies
        \begin{subequations}\label{eq: connection definition}
        \begin{align}
        \Ad_{g^{-1}}\omega(X) = & \mathrlap{\omega((r_{g})_\ast X) \, ,}  && \forall X\in T_pP \\
        \shortintertext{and}
         \omega \left ( \frac{d}{dt}\Big|_{t = 0} p \cdot e^{tA} \right)   =& \mathrlap{A \, ,} && \forall A \in \lie{g} \, . 
        \end{align}
        \end{subequations}
        
A connection defines a canonical bundle decomposition $TP=\mathcal{H}\oplus \mathcal{V}$, where $\mathcal{H}=\ker \omega$ is the \emph{horizontal bundle}, and $\mathcal{V}=\ker \pi_*$ is the \emph{vertical bundle}.
    
    While connections are defined as forms on $P$, the conditions in \eqref{eq: connection definition} imply that they are determined by local forms on $M$. 
    Let $\sigma \from U\to \pi^{-1}(U) \subset P$ be a local section on $U\subset M$. If $\omega$ is a connection on $P$, then the pullback $\sigma^\ast\omega$ is a 1-form on $U$ with values in $\lie{g}$, and any 1-form $\tau \in C^\infty\Lambda^1(U, \lie{g})$ is the pullback of a unique connection on $\pi^{-1}(U)$. 
    If we have multiple sections, say for instance that $\{U_\alpha  ,\sigma_\alpha \}_\alpha$ is a cover of $M$ with corresponding sections $\sigma_\alpha \from U_\alpha  \to P$, and we let $\omega_\alpha \coloneqq\sigma^\ast _\alpha \omega$. Then the local forms $\omega_\alpha$ and $\omega_\beta$ with intersecting domains must satisfy
    \begin{equation}\label{eq: transition forms}
    \omega_\beta - \Ad_ {g_{\alpha \beta }^{-1} }  \omega_\alpha   = g_{\alpha \beta }^\ast \theta \quad \text{ on } U_\alpha  \cap U_\beta \, ,
    \end{equation} 
    where $g_{\alpha \beta}\from U_\alpha \cap U_\beta \to G$ is the transition function \eqref{eq: transition function},
    and $\theta$ is the (left-invariant) Maurer--Cartan form on $G$. 
    \begin{lemma}
    \label{lem: compatibility}
    Equation \eqref{eq: transition forms} is compatible with composition, i.e., if
    \begin{subequations}
    \begin{align}
        \omega_\beta - \Ad_ {g_{\alpha \beta }^{-1} }  \omega_\alpha   &= g_{\alpha \beta }^\ast \theta \label{eq: trf1}\\
        \text{and}& \nonumber\\
        \omega_\gamma - \Ad_ {g_{\beta \gamma}^{-1} }  \omega_\beta   &= g_{\beta \gamma}^\ast \theta \label{eq: trf2} \\
        \text{then}& \nonumber\\
         \omega_\gamma - \Ad_ {g_{\alpha \gamma}^{-1} }  \omega_\alpha   &= g_{\alpha \gamma}^\ast \theta \label{eq: trf3}
    \end{align}
    \end{subequations}
    \end{lemma}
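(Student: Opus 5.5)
We work on the triple overlap $U_\alpha\cap U_\beta\cap U_\gamma$, where \eqref{eq: trf1} and \eqref{eq: trf2} both hold and the cocycle identity $g_{\alpha\gamma}=g_{\alpha\beta}\cdot g_{\beta\gamma}$ is available. The plan is to eliminate $\omega_\beta$ and then recognize the remaining terms as the pullback of the Maurer--Cartan form under a pointwise product.

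First, substitute \eqref{eq: trf1}, in the form $\omega_\beta=\Ad_{g_{\alpha\beta}^{-1}}\omega_\alpha+g_{\alpha\beta}^\ast\theta$, into \eqref{eq: trf2}. Since $\Ad$ is a homomorphism, we have $\Ad_{g_{\beta\gamma}^{-1}}\Ad_{g_{\alpha\beta}^{-1}}=\Ad_{(g_{\alpha\beta}g_{\beta\gamma})^{-1}}=\Ad_{g_{\alpha\gamma}^{-1}}$. This gives
\[
\omega_\gamma-\Ad_{g_{\alpha\gamma}^{-1}}\omega_\alpha=\Ad_{g_{\beta\gamma}^{-1}}\,g_{\alpha\beta}^\ast\theta+g_{\beta\gamma}^\ast\theta .
\]
It therefore remains to prove the Maurer--Cartan product rule: for smooth maps $f,h\colon U\to G$,
\[
(fh)^\ast\theta=\Ad_{h^{-1}}f^\ast\theta+h^\ast\theta .
\]
Applying this with $f=g_{\alpha\beta}$ and $h=g_{\beta\gamma}$ yields \eqref{eq: trf3}.

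To prove the product rule, take $x\in U$ and $X\in T_xU$. Write $\mu\colon G\times G\to G$ for multiplication. Then
\[
d(fh)(X)=d\mu_{(f(x),h(x))}\bigl(df(X),dh(X)\bigr)=(r_{h(x)})_\ast df(X)+(l_{f(x)})_\ast dh(X).
\]
Apply $\theta_{f(x)h(x)}=(l_{(fh)(x)^{-1}})_\ast$ to each term. The second term becomes $(l_{h^{-1}})_\ast dh(X)=h^\ast\theta(X)$. The first term becomes $(l_{h^{-1}})_\ast(r_h)_\ast(l_{f^{-1}})_\ast df(X)$, which equals $\Ad_{h^{-1}}f^\ast\theta(X)$ because left and right translations commute and $\Ad_{h^{-1}}=(l_{h^{-1}}\circ r_h)_\ast$ at $e$.

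The only step requiring care is this Leibniz-type computation for the differential of the pointwise product. In particular, we must keep track of which translation produces $\Ad_{h^{-1}}$ rather than $\Ad_h$, so that the convention matches the left-invariant $\theta$ used in \eqref{eq: transition forms}.

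In the abelian case relevant later, $\Ad$ is the identity and $\theta$ pulls back additively. The statement then reduces to adding \eqref{eq: trf1} and \eqref{eq: trf2} and using $(g_{\alpha\beta}g_{\beta\gamma})^\ast\theta=g_{\alpha\beta}^\ast\theta+g_{\beta\gamma}^\ast\theta$.
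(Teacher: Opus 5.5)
Your proposal is correct and follows the paper's proof. Both substitute \eqref{eq: trf1} into \eqref{eq: trf2} and then use $\Ad_{g_{\alpha\gamma}^{-1}}=\Ad_{g_{\beta\gamma}^{-1}}\Ad_{g_{\alpha\beta}^{-1}}$ together with the Maurer--Cartan product rule $g_{\alpha\gamma}^\ast\theta=g_{\beta\gamma}^\ast\theta+\Ad_{g_{\beta\gamma}^{-1}}g_{\alpha\beta}^\ast\theta$. The only difference is that the paper just asserts this product rule, whereas you derive it correctly from the differential of the multiplication map.
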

    \begin{proof}
        Insert \eqref{eq: trf1} into \eqref{eq: trf2} and use that $g_{\alpha\gamma}= g_{\alpha\beta}g_{\beta \gamma}$, which implies the identities
        \begin{align*}
            \Ad_{g_{\alpha\gamma}^{-1}}&=\Ad_{g_{\beta\gamma}^{-1}} \Ad_{g_{\alpha\beta}^{-1}} \\
            g_{\alpha\gamma}^\ast \theta&= g_{\beta\gamma}^\ast \theta + \Ad_{g_{\beta\gamma}^{-1}}g_{\alpha\beta}^\ast \theta
        \end{align*}
    \end{proof}
    
    If $G$ is a matrix Lie group, we can write \eqref{eq: transition forms} as
    \[
         \omega _\beta-  g_{\alpha \beta}^{-1} \omega_\alpha g_{\alpha \beta}  = g_{\alpha \beta }^{-1} d g_{\alpha \beta} \quad \text{ on } U_\alpha  \cap U_\beta  \, , 
    \]
    which for an abelian group $G$ simplifies to
      \[
    \omega_\beta- \omega_\alpha  =g_{\alpha \beta}^{-1}dg_{\alpha\beta} \quad \text{ on } U_\alpha  \cap U_\beta \, ,
    \]
    because $\Ad_g{} = I$. 
    Conversely, given a collection $\{U_\alpha,\sigma_\alpha\}_\alpha$, local 1-forms $\{\omega_\alpha\}_\alpha$ satisfying \Cref{eq: transition forms} define a connection \parencite[Proposition~1.4]{KN61vol1}. We use this equivalent description because it allows us to work only on the base manifold $M$.
\subsection{Curvature of a connection.}
    Let $\omega$ be a connection on $P$. 
    
    The \emph{curvature} of $\omega$ is denoted by $F_\omega$ and defined by
    \[
        F_\omega :=  d\omega + \frac 1 2 [\omega \wedge\omega]
    \]
    When $G$ is abelian, the formula simplifies to $F_\omega = d\omega $. If there is no room for confusion, we will drop the subscript and simply write $F$.

    We next describe how $F_\omega$ interacts with local sections.
    Let $\sigma_\alpha\from U_\alpha\to P$ and $\sigma_\beta\from U_\beta\to P$ be two sections of $P$ with nonempty overlap $U_\alpha\cap U_\beta$. Then $\sigma_\alpha ^\ast F = F_\alpha$ and $\sigma_\beta ^\ast F = F_\beta$ are related by
    \begin{equation}\label{eq: Ad-equivariance curvature}
    F_\beta  = \Ad_{g^{-1}_{\alpha\beta}} F_\alpha 
    \end{equation}
    on the intersection $U_\alpha \cap U_\beta$ \parencite[Theorem~5.6.3]{hamilton2017mathematical}. Using the $\Ad{}$-invariant inner product $\langle \cdot, \cdot \rangle_\lie g $, we see from \eqref{eq: Ad-equivariance curvature} that at any point $x\in U_\alpha \cap U_\beta$ we have \[
    \| F_\beta\|_x ^2= \| \Ad _{g_{\alpha\beta}^{-1}} F_\alpha \| _x^2 = \|F_\alpha \|_x ^2\, .
    \]
    We will abuse notation and denote this well-defined function by $\| F_\omega\| ^2 $.
    \subsection{Yang--Mills action functional.}
    The \emph{action} $\mathcal S(\omega)$ of a connection is obtained by integrating the squared magnitude of its curvature over the base manifold
    \begin{equation}\label{eq: action}
        \mathcal S ( \omega) =  \frac 1 2 \int_M \| F_\omega \| ^2 \vol \, =  \frac 1 2 \int_M \langle F_{\omega} , F_\omega \rangle _x \vol \, .
    \end{equation}
    We are interested in finding stationary points of the \emph{Yang--Mills action functional} $\mathcal S$. A connection that is a stationary point of $\mathcal S$ is called a \emph{Yang--Mills connection}, and the problem of finding such a connection is the \emph{Yang--Mills variational problem}. Later we will also consider $\mathcal S(\omega)$ for collections of local forms $\omega$ that need not satisfy \eqref{eq: transition forms}. This is useful because the connection conditions will be imposed separately as constraints.

\subsection{Triangulation of the manifold.}
    We now fix a triangulation $\mathcal{T} = \{ T_i \}_i$ of the manifold $M$. We assume that $M$ and $\mathcal{T}$ are sufficiently well-behaved.
    Specifically, we assume that the triangulation forms a simplicial complex and that each $m$-dimensional simplex $T_i$ is contained in an $m$-dimensional submanifold and in a trivializing subset $U_i$ with a fixed section $\sigma_i \from U_i \to \pi^{-1}(U_i)\subset P$.
    We use $\mathcal T$ to denote the set of $n$-cells and $T$ to denote an arbitrary cell. Similarly, $\mathcal F$ denotes the set of $(n-1)$-dimensional facets, and $f$ denotes an arbitrary facet, i.e., the intersection of two cells. If $T_i$, $T_j$, and $f$ all appear in an equation, then $f = T_i \cap T_j$. We have inclusions $i_f\colon f \to M$ and $i_{\partial T}\colon \partial T \to M$.
    The image of any $n$-dimensional simplex $T$, via a chart, is assumed to be a Lipschitz domain.

    We will be interested in forms defined locally on each cell $T_i\in \mathcal{T}$.
    \[C^\infty \Lambda^k(\mathcal T, \lie{g}):= \bigoplus_{T_i \in \mathcal T} C^\infty \Lambda^k(T_i, \lie{g})\]
    Here we do not assume any continuity across cell boundaries. For elements in $C^\infty \Lambda^k(\mathcal T, \lie{g})$, we write $\varphi=(\varphi_{i})_{i}$, where $\varphi_{i}$ is the component on $T_i$.
 \subsection{Triangulated connection forms.}
    We will now describe a connection $\omega$ on $P$ in terms of the triangulation $\mathcal T$.
    Assume each $T_i$ is contained in an open set $U_i$ with corresponding section $\sigma_i \colon U_i \to P$. A connection can be defined by a collection of local forms
    \[
        \{\omega_j= \sigma_j^\ast \omega \in C^\infty \Lambda^1(U_j, \lie{g})\}_j 
    \]
    satisfying
    \begin{equation}\label{eq: Ujumpcondition}
        \omega_j -\Ad _{g_{ij}^{-1} }\omega_i = g_{ij}^\ast \theta
    \end{equation}
   where $g_{ij}$ is the transition function between overlapping $U_i$ and $U_j$.

   Restricting smooth local connection forms to the cells gives forms $\omega_i \in C^\infty \Lambda^1(T_i, \lie{g})$ that satisfy \eqref{eq: Ujumpcondition} on their common facets. On each interface $T_i \cap T_j$, this condition splits into tangential and normal parts. We take these necessary interface conditions as the definition of a triangulated connection form.
    \begin{definition}[Triangulated connection form]
        Let the following be given
            \begin{itemize}
             \item A triangulation $\mathcal T$ of $M$.
            \item For each $T_i\in \mathcal{T}$, an open neighborhood $U_i \subset M$ and a corresponding section $\sigma_i\colon U_i \to P$.
            \end{itemize}
            Then we say that $ \{\omega_i\} \in C^\infty\Lambda^1(\mathcal T,\lie{g}) $ is a \emph{triangulated connection form}, if on each $ f= T_i \cap T_j$ we have
        \begin{subequations}\label{eq: Tangential and normal}
        \begin{align}\label{Tangential}
            \hspace{1.5cm}i^\ast _f\omega_j -\Ad _{g_{ij}^{-1} } i_f^\ast \omega_i =& i_f^\ast g_{ij}^\ast \theta &  & \text{(tangential part)} \\
            \intertext{and}
            \hspace{1.5cm}i_f^\ast (\star  \omega_j) - \Ad _{g_{ij}^{-1} }i_f^\ast(\star \omega_i) =& i_f^\ast  (\star g_{ij}^\ast \theta) & &\text{(normal part)}         \label{Normal}
        \end{align}          
        \end{subequations}
        where $g_{ij}$ is such that $\sigma_j = \sigma_ig_{ij}$ on each $f$. 
    \end{definition}
    For refinements $\mathcal T_h$ of $\mathcal T$, the sections will be inherited from the base triangulation $\mathcal T$. By this we mean that for $\tilde T_j \in \mathcal T_h$,
    \[
        \tilde T_j \subset {T_i} \in \mathcal T \Longrightarrow  \tilde\sigma_j = \sigma_i \, .
    \]
    Thus, after restricting each coarse-cell form to its child cells, triangulated connection forms on $\mathcal T$ also satisfy \eqref{eq: Tangential and normal} on the refined mesh.
\begin{lemma}
If $\{\omega_i\}_i$ is a triangulated connection form, then the local forms determine a unique continuous, piecewise-smooth $\lie{g}$-valued 1-form $\omega$ on $P$ such that $\sigma_i^\ast \omega=\omega_i$ in the interior of each $T_i$. In each cell interior, $\omega$ is a smooth connection.
\end{lemma}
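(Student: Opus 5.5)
The plan is to build $\omega$ cell by cell on $P$ and then check that the pieces agree on the preimages of the facets.

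\textbf{Construction on a single cell.} For each $T_i$, the section $\sigma_i$ gives a trivialization $\Phi_i\colon T_i\times G\to \pi^{-1}(T_i)$, $(x,g)\mapsto \sigma_i(x)\cdot g$. On $\pi^{-1}(T_i)$ I define
\[
\omega^{(i)} \coloneqq \Ad_{g^{-1}}\pi^\ast\omega_i + g^\ast\theta ,
\]
where $g\colon \pi^{-1}(T_i)\to G$ is the second component of $\Phi_i^{-1}$. This is the standard local formula, e.g.\ as in \parencite[Proposition~1.4]{KN61vol1}. One checks both conditions in \eqref{eq: connection definition}:
\begin{itemize}
\item equivariance under $r_h$ follows from $g\mapsto gh$ together with the invariance properties of $\theta$;
\item the vertical normalization holds because $g^\ast\theta$ restricted to a fibre is the Maurer--Cartan form.
\end{itemize}
Also $\sigma_i^\ast\omega^{(i)}=\omega_i$, since $g\circ\sigma_i\equiv e$. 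So in the interior of $T_i$ this is a smooth connection. It is uniquely determined by $\omega_i$, because a connection is fixed by its pullback along a section together with \eqref{eq: connection definition}.

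\textbf{Gluing and uniqueness.} Set $\omega=\omega^{(i)}$ on $\pi^{-1}(\operatorname{int}T_i)$. The cell interiors together with the preimages of the lower-dimensional skeleton exhaust $P$, and the latter have measure zero. Hence $\omega$ is determined uniquely as a piecewise-smooth form by the requirement $\sigma_i^\ast\omega=\omega_i$ in each cell interior. It remains to show that $\omega$ is continuous, i.e.\ that the one-sided limits $\omega^{(i)}$ and $\omega^{(j)}$ agree on $\pi^{-1}(f)$ for each facet $f=T_i\cap T_j$. Lower-dimensional faces then follow: a point lies in finitely many cells, and continuity across each adjacent pair of cells around it, combined with \Cref{lem: compatibility} for the transition functions, gives agreement of all limits.

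\textbf{Main obstacle: continuity across a facet.} This is the key step. Both $\omega^{(i)}$ and $\omega^{(j)}$ satisfy \eqref{eq: connection definition}, so their difference $\eta=\omega^{(j)}-\omega^{(i)}$ on $\pi^{-1}(f)$ has two properties:
\begin{itemize}
\item it vanishes on vertical vectors;
\item it is $\Ad$-equivariant.
\end{itemize}
Hence it suffices to check $\sigma_j^\ast\eta=0$ at points of $f$, as a full covector on $T_xM$ and not just tangentially. Using $\sigma_j=\sigma_i g_{ij}$, the standard computation behind \eqref{eq: transition forms} gives
\[
\sigma_j^\ast\omega^{(i)}=\Ad_{g_{ij}^{-1}}\omega_i+g_{ij}^\ast\theta .
\]
So I need $\omega_j-\Ad_{g_{ij}^{-1}}\omega_i-g_{ij}^\ast\theta=0$ as a full 1-form at $x\in f$. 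Call this 1-form $\beta$; it takes values in $\lie g$ and is evaluated pointwise on $T_xM$, which splits as $T_xf\oplus\RR\nu$ with $\nu$ the unit normal to $f$.
\begin{itemize}
\item The tangential condition \eqref{Tangential} gives $i_f^\ast\beta=0$, i.e.\ $\beta$ kills $T_xf$.
\item For the normal part, $i_f^\ast(\star\beta)$ equals, up to sign, $\beta(\nu)$ times the induced volume form of $f$. Since $\Ad_{g_{ij}^{-1}}$ acts on $\lie g$ and commutes with $\star$ componentwise, \eqref{Normal} is exactly $i_f^\ast(\star\beta)=0$, hence $\beta(\nu)=0$.
\end{itemize}
Therefore $\beta=0$ at $x$, and so $\eta=0$ on $\pi^{-1}(f)$. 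The one technical care point is that $\omega_i$ is smooth up to the boundary of $T_i$ and $g_{ij}$ is defined on a neighbourhood $U_i\cap U_j$ of $f$, so these pointwise identities make sense as one-sided limits.
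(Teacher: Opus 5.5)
Your argument is correct, but it is organized differently from the paper's proof.

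\textbf{The paper's route.} The paper stays on the base manifold. It extends each $\omega_i$ to a continuous form $\tilde\omega_i$ on the whole neighbourhood $U_i$ by setting $\tilde\omega_i=g_{ji}^\ast\theta+\Ad_{g_{ji}^{-1}}\omega_j$ on each $\mathring T_j$. It then checks well-definedness on facets, via the tangential and normal conditions, and at points shared by several cells, via \Cref{lem: compatibility}. Finally, the family $\{\tilde\omega_i\}$ satisfies \eqref{eq: Ujumpcondition}, so it defines $\omega$ on $P$ through \parencite[Proposition~1.4]{KN61vol1}.

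\textbf{Your route.} You build the connection $\omega^{(i)}$ on $\pi^{-1}(T_i)$ by the explicit local formula and glue on $P$ itself. Using horizontality and equivariance of $\omega^{(j)}-\omega^{(i)}$, you reduce agreement across a facet to the vanishing of $\beta=\omega_j-\Ad_{g_{ij}^{-1}}\omega_i-g_{ij}^\ast\theta$ at points of $f$.

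\textbf{Comparison.} The key fact is the same in both proofs: the tangential and normal conditions together force the full covector identity on $T_xM$. You actually justify this through $T_xM=T_xf\oplus\RR\nu$ and $i_f^\ast(\star\beta)=\pm\beta(\nu)\vol_f$, whereas the paper only asserts it in its case (i).

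Working on $P$ also reduces points on lower-dimensional faces to transitivity of equality of forms on $P$. So \Cref{lem: compatibility}, which you cite, is not actually needed in your version. What you do need is that the cells containing such a point are connected to one another through facets containing it. This holds for triangulations of manifolds. The paper uses the same fact tacitly in its case (ii), since the jump conditions are imposed only on facets.

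In exchange, the paper's route keeps everything on $M$, in line with its stated aim of working only on the base. It also yields continuous local connection forms on the open sets $U_i$, not just on the closed cells.
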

\begin{proof}
    The construction consists of extending each $\omega_i\in C^\infty\Lambda^1(T_i,\lie{g})$ to a form $\tilde{\omega}_i\in C^0\Lambda^1(U_i,\lie{g})$ by using the forms $\omega_j$ on adjacent cells $T_j$ and \eqref{eq: Ujumpcondition}.
    By construction, the forms $\tilde{\omega}_i$ will satisfy \eqref{eq: Ujumpcondition} and thus define a connection on $P$. 
    We need to verify that the construction is unique at each point in $U_i$.

    On the interior of any $T_j$, we define
    \[
    \tilde{\omega}_i\big\lvert_{\mathring{T_j}}= g_{ji}^\ast\theta + \Ad_{g_{ji}^{-1}}\omega_j
    \]
    If $p$ is on the boundary of two (or more) cells, we can choose between two (or more) $\omega_j$ in the equation above. We need to verify that the resulting $\tilde{\omega}_i(p)$ is independent of the choice of $\omega_j$.
    There are two cases
    \begin{enumerate}[label=(\roman*), ref=\roman*]
    \item $p\in T_i\cap T_j$. In this case \eqref{eq: Tangential and normal} ensures that $\omega_i(v) =  (g_{ji}^\ast\theta)(v) + \Ad_{g_{ji}^{-1}}\omega_j(v)$ for every vector $v\in T_p(M)$.
    \item $p\in U_i\cap T_j \cap T_k$. In this case, we have a choice between $\omega_j$ and $\omega_k$ when defining $\tilde{\omega_i}(p)$.
    However, since $\omega_j, \omega_k$ satisfy \eqref{eq: Tangential and normal}, we have
    \[
    g_{ji}^\ast\theta + \Ad_{g_{ji}^{-1}}\omega_j=  g_{ki}^\ast\theta + \Ad_{g_{ki}^{-1}}\omega_k 
    \]
    on $T_j \cap T_k$ by Lemma \ref{lem: compatibility}.
    Thus, the possible choices for defining $\tilde{\omega}_i(p)$ coincide.
    \end{enumerate}
\end{proof}
In general, the resulting $\omega$ is only $C^0$, rather than $C^\infty$, across facets and is therefore not a smooth connection globally.

\subsection{Orientations with respect to the triangulation.}
    To keep track of differences (``jumps'') between cells, we fix a sign convention for every neighboring pair of cells $T_i, T_j$. We assume that each facet $f\in \mathcal F$ is assigned an orientation and hence a well-defined $(n-1)$-volume form.
    
    Given this orientation, there is a unique skew function $\varepsilon \colon \mathcal T \times \mathcal T\to \{-1, 0, 1\}$ satisfying:
    \begin{itemize}
        \item $\varepsilon_{ji}=0$ if $T_i \cap T_j$ is not $(n-1)$-dimensional.
        \item When $f=T_i\cap T_j $ is a $(n-1)$-dimensional facet, then $\varepsilon_{ji} = \pm 1$. 
        \item For every $\varphi \in C^\infty \Lambda^{n-1}(\mathcal T)$, the identity
    \begin{equation}\label{eq: epsdef}
       \sum _{T_i\in \mathcal T} \int _{\partial T_i }i_{\partial T_i }^\ast \varphi_i=\sum_{\varepsilon_{ji} =1} \int_f \varepsilon_{ji} i^\ast_f\varphi_{j} + \varepsilon_{ij}i^\ast_f \varphi_{i} = \sum_{\varepsilon_{ji} =1} \int_f \varepsilon_{ji}( i^\ast_f\varphi_{j} - i^\ast_f \varphi_{i}) 
    \end{equation}
    holds, where in each summand $f = T_i \cap T_j$ and $i_f^\ast$ is the pullback of the injection $i_f\colon f\to M$.
    \end{itemize}

    \begin{figure}[!htbp]
	\centering
	\begin{tikzpicture}[scale=1.18, every node/.style={font=\small}]
		\coordinate (A) at (0,1.1);
		\coordinate (B) at (2,0);
		\coordinate (C) at (2,2.2);
		\coordinate (D) at (4,1.1);
		
		\draw[thick] (A) -- (B) -- (C) -- cycle;
		\draw[thick] (D) -- (C) -- (B) -- cycle;
		
		\node at (1.7,1.6) {$T_i$};
		\node at (2.3,1.6) {$T_j$};
		\node at (2,2.45) {$f$};
		
		\draw[thick,->] (1.3,1.4) arc[start angle=90,end angle=365,radius=0.3]; 
		\draw[thick,->] (2.6,0.8) arc[start angle=-90,end angle=185,radius=0.3];  
		
		\draw[very thick,->] (2,1.65) -- (2,0.55);

		\node at (2,-0.45) {$\varepsilon_{ij}=-1,\qquad \varepsilon_{ji}=1$};
	\end{tikzpicture}
	\caption{Two oriented cells sharing a facet $f$.}
    Alt text: Two triangles $T_i$ to the left, and $T_j$ to the right, which share a facet. The facet is marked with an arrow, indicating the orientation of the the facet. Each of the two triangles have a semi-circle arrow, indicating the counter-clockwise orientation. The number $\varepsilon_{ij}$ is -1, because the arrows on $T_i$ and $f$ point in opposite directions. The number $\varepsilon_{ji}$ is 1, because the arrows on $T_j$ and $f$ point in the same direction.
	\label{fig:epsilon-orientation}
\end{figure}

    When we refine $\mathcal T$ to $\mathcal T_h$, the orientation of each facet $f_h\in\mathcal F_h$ is chosen so that, if $f_h\subset f$ for some $f\in\mathcal F$, then $f_h$ inherits the orientation of $f$.

\subsection{Constraint equation with respect to the triangulation.}
    For each $f \in \mathcal F$ the constraint can be written in two equivalent ways.
    \[
        \omega_j - \Ad_{g_{ij}^{-1}}\omega_i = g_{ij}^\ast\theta \Longleftrightarrow
        \omega_i - \Ad_{g_{ji}^{-1}}\omega_j = g_{ji}^\ast\theta
    \]
    We choose to enforce the constraint involving $g_{ij}$ where $\varepsilon_{ji} = 1$. 
    In the following facet integrals, the test forms are $\lie{g}$-valued and the Lie-algebra components of each wedge product are paired using $\langle\cdot,\cdot\rangle_{\lie{g}}$; we suppress this pairing in the notation. With the orientation fixed above, the tangential and normal jumps take values in $\oplus_{f \in \mathcal F} C^\infty\Lambda^1(f,\lie{g})$ and $\oplus_{f \in \mathcal F} C^\infty\Lambda^{n-1}(f,\lie{g})$, respectively. Pairing these jumps with test forms defines two linear maps
    \begin{equation}\label{eq: jumpdef}
    \begin{aligned}
    \ljump \omega^\tangent \rjump_{\Ad}(\varphi) &= \sum _{\varepsilon_{ji} =1}\int_f ( i^\ast_f \omega_j-\Ad_{g_{ij}^{-1}}i^\ast_f\omega_i )\wedge i_f^\ast(\star\varphi)\\  
  \ljump \omega^\normal\rjump_{\Ad}(q) &= \sum _{\varepsilon_{ji} =1} \int_f i^\ast_f q\wedge (i^\ast_f( \star \omega_j)-\Ad_{g_{ij}^{-1}}(i^\ast_f\star\omega_i)) \\
    \end{aligned}
    \end{equation}
    called the \emph{twisted tangential jump} and the \emph{twisted normal jump} of $\omega$, respectively. When $\Ad = I$, we will just call them the \emph{tangential jump} and \emph{normal jump}. In the abelian case, we omit the $\Ad$ subscript and often use the notation $\ljump \omega, \varphi\rjump = \ljump \omega^\tangent \rjump(\varphi)$ and $\ljump q, \omega \rjump = \ljump \omega^\normal\rjump(q)$. The motivation for this notation will become clear in \Cref{sec: Function spaces on the base manifold}.
    
    Similarly, we collect the right-hand sides of \eqref{eq: Tangential and normal} into forms $\psi_1\in\oplus_{f \in \mathcal F} C^\infty\Lambda^1(f,\lie{g})$ and $\psi_2\in\oplus_{f \in \mathcal F} C^\infty\Lambda^{n-1}(f,\lie{g})$:
    \begin{equation}
        \begin{aligned}
        \psi_1 &= \sum _{\varepsilon_{ji} =1} i_f^\ast g_{ij}^\ast \theta \\  
        \psi_2  &= \sum _{\varepsilon_{ji} =1} i_f^\ast  (\star g_{ij}^\ast \theta).
        \end{aligned}
        \label{eq: jumpdefpsi}
    \end{equation}
    with corresponding maps $G_\psi \colon C^\infty\Lambda^{2}(M,\lie{g}) \to \mathbb R $ and $F_\psi \colon C^\infty\Lambda^{0}(M,\lie{g})  \to \mathbb R$ given by
    \begin{equation}\label{eq: prescribed jumps}
    \begin{aligned}
    G_\psi(\varphi) = & 
      \sum_{\varepsilon_{ji} = 1} \int _f  i_f^\ast(g_{ij}^{-1}dg_{ij}) \wedge i_f^\ast(\star \varphi) \\
     F_\psi(q) =& 
      \sum_{\varepsilon_{ji} = 1} \int _f   i_f^\ast q\wedge i_f^\ast(\star g_{ij}^{-1}dg_{ij})
    \end{aligned}
    \end{equation}
    If $\omega \in C^\infty\Lambda^1(\mathcal T,\lie{g})$ and
\[
    (\ljump \omega ^\tangent\rjump_{\Ad} - G_\psi)(\varphi) = \sum_{\varepsilon_{ji} =1} \int_f \big ( (i^\ast_f \omega_j-\Ad_{g_{ij}^{-1}}i^\ast_f\omega_i)- i_f^\ast(g_{ij}^{-1}dg_{ij}) \big ) \wedge i_f^\ast(\star \varphi)  =0 
\]
for all $\varphi \in C^\infty\Lambda^2(M,\lie{g})$, and
\[
    (\ljump \omega ^\normal\rjump_{\Ad} - F_\psi)(q) = \sum_{\varepsilon_{ji} =1} \int_f i_f^\ast(q)\wedge \big ((i^\ast_f( \star \omega_j)-\Ad_{g_{ij}^{-1}}(i^\ast_f\star\omega_i))- i_f^\ast(\star g_{ij}^{-1}dg_{ij}) \big ) =0
\]
for all $q \in C^\infty \Lambda^0(M,\lie{g})$, then \Cref{eq: Tangential and normal} is satisfied. We define two constraint functions $C_\tangent : \omega \mapsto (\ljump \omega ^\tangent\rjump_{\Ad} - G_\psi)$ and $C_\normal : \omega \mapsto (\ljump \omega ^\normal\rjump_{\Ad} - F_\psi)$. For each $\omega$, $C_\tangent(\omega) \colon C^\infty \Lambda ^{2}(M,\lie{g}) \to \mathbb R$ and $C_\normal(\omega) \colon C^\infty \Lambda ^{0}(M,\lie{g}) \to \mathbb R$ are linear maps.

    \todo[inline,disable]{Are these sums really well defined? We are summing over traces taking values in different spaces. The jump form seems well defined to me, and collects the data correctly. But I would prefer to work with the `facet jump' form considered later, where the sum is replace with a direct sum. Moreover it's what we actually work with in the finite element formulation - Charles}
    \todo[inline,disable]{Comment to comment: This wasn't explained that well. It is now changed such that it is clear that we are taking traces to one facet in each summand. Also, I think that if we have $(a_1,0, \dots, 0), (0, \dots, 0,a _n) \in A_1 \oplus \dots \oplus A_n$, then it must be okay to write $a_1 +a_n$ instead of $(a_1, 0 , \dots, 0) + (0, \dots, 0, a_n)$}
    \subsection{Discretized variational problem.}
    The definition of a triangulated connection form allows us to make the action \eqref{eq: action} defined above more concrete.
    Let $\omega \in \bigoplus_i C^\infty\Lambda^1(T_i, \lie{g})$. That is, $\omega$ is a 1-form defined locally on each cell $T_i$. If the constraints \eqref{Tangential} and \eqref{Normal} are satisfied, then $\omega$ is a triangulated connection form.
    The Yang--Mills variational problem becomes
    \begin{equation}\label{eq: YMgeneral}
    \begin{aligned}
    &\text{minimize}\ \mathcal S(\omega)= \frac 1 2 \sum _i\| F_i \|_{L^2\Lambda^2(T_i)}  ^ 2 \\  
    &\text{subject to: \Cref{eq: Tangential and normal} for each facet $f \in \mathcal F$}
    \end{aligned}
    \end{equation}
    The second line in the equation above enforces the triangulated connection conditions and is equivalent to $\omega$ belonging to the constraint manifold defined by $C_\tangent(\omega) = 0$ and $C_\normal(\omega) = 0$. To work with Lagrange multipliers rigorously, we take a completion of $C^\infty\Lambda^1(\mathcal T)$ to form a Hilbert space. We then make sense of $\ljump \omega ^\tangent\rjump$ and $\ljump \omega ^\normal\rjump$ in the Hilbert space setting.
    In the Lagrangian framework, if $\omega$ is a solution to the Yang--Mills variational problem, there exist $p$ and $\lambda$ such that $(\omega,p,\lambda)$ is a stationary point of the Lagrangian
    \begin{equation*}
    \mathcal L(\omega,p,\lambda) \coloneqq \mathcal S(\omega) + \langle C_\tangent(\omega),\lambda \rangle + \langle C_\normal(\omega),p \rangle
    \end{equation*}
    This is the basic idea of our approach. To obtain a computational method, we must specify the function spaces, evaluate the jump terms, and remove the nullspaces of the Lagrangian. The next section constructs the relevant Hilbert-space completions and defines the jump operators on them.
\section{Function spaces on the base manifold}\label{sec: Function spaces on the base manifold}
    This section is primarily concerned with defining traces and jumps. A differential form $\tau$ that is smooth on each $T\in\mathcal T$ has no jumps if and only if $\tau\in H\Lambda^k(M)$; we extend this characterization to forms that lie locally in $H\Lambda^k(T)$.

    Let $M$ be a compact Riemannian manifold, and $\mathcal{T}$ a fixed triangulation of $M$, as defined in the previous section.
    The first challenge when considering a finite element approach to connection forms is the presence of jumps. 
    We will consider the following Hilbert spaces for $k$-forms on $M$. For now, we consider real-valued forms; Lie algebra-valued forms are discussed at the end of the section.
\subsection{Differential form Sobolev spaces.}
For $\tau, \varphi  \in C^\infty _0 \Lambda^k(M)$ their $L^2$-inner product is 
\[
    \langle \tau , \varphi \rangle _{L^2} = \int_M \tau \wedge \star \varphi  = \int_M \langle \tau, \varphi \rangle_ h \vol  \, .
\]
\begin{definition}\label{def: Sobolev spaces}
Define the following Sobolev spaces:
\begin{enumerate}[label=\textup{\roman*.}, ref=\textup{\roman*}]
    \item $L^2\Lambda^k(M) = \overline {C^\infty\Lambda ^k(M)}^{L^2}$, the completion of smooth forms with respect to the $L^2$ induced norm.
    \item $H\Lambda^k(M) \coloneqq \{ \tau \in L^2\Lambda^k(M) : d\tau \in L^2\Lambda ^{k+1}(M) \}$, where $d \tau \in L^2\Lambda^{k+1}(M)$ means that there exists $\alpha \in L^2\Lambda^{k+1}(M)$ such that
\[
    \langle \tau , \delta \varphi \rangle _{L^2}  = \langle \alpha , \varphi \rangle _{L^2} \quad \forall \varphi \in C_0^\infty \Lambda^{k+1}(M).
\]
\item $H^\star \Lambda^k(M)\coloneqq \{\tau \in L^2\Lambda^k(M) : \delta \tau \in L^2\Lambda^{k-1}(M) \}$, where $\delta \tau \in L^2\Lambda^{k-1}(M)$ is again defined weakly.
\item $\displaystyle H\Lambda^k(\mathcal{T}) \coloneqq \oplus_{T_i \in \mathcal T} H\Lambda ^k(T_i)$ where $H\Lambda^k(T_i)$ is defined analogously to $H\Lambda^k(M)$.
\item $H^\star\Lambda^k(\mathcal{T}) \coloneqq \oplus_{T_i \in \mathcal T} H^\star\Lambda ^k(T_i)$
\end{enumerate}
\end{definition}
All the spaces defined above are considered to be contained in $L^2\Lambda^k(M)$. 
We have $H\Lambda^k(M) \subset H \Lambda^k(\mathcal{T})$ and $H^\star \Lambda^k(M) \subset H^\star \Lambda^k(\mathcal{T})$ as closed subspaces.
The spaces $H \Lambda^k(\mathcal{T})$ and $H^\star \Lambda^k(\mathcal{T})$ are sometimes called ``broken'' spaces in literature.
We define two differential operators: 
\begin{align*}
    d^\mathcal T\from H\Lambda^k(\mathcal T) & \to H\Lambda^{k+1}(\mathcal T) &\delta^\mathcal T \from H^\star \Lambda^k(\mathcal T) &\to H^\star\Lambda^{k-1}(\mathcal T) \\
    d^\mathcal T \tau &= \sum_{T_i\in \mathcal T} d\tau_i & \delta ^\mathcal T \tau &= \sum_{T_i\in \mathcal T} \delta \tau_i,
\end{align*}
where $\tau_i$ is the restriction of $\tau$ to $T_i.$
The operators $d^\mathcal{T}$ and $\delta^\mathcal{T}$ are extensions of  $d$ and $\delta$, in the sense that for any $\tau$ in $H\Lambda^k(M)$ or $H^\star\Lambda^{k}(M)$ we have $d^\mathcal T \tau = d \tau $ and $\delta^\mathcal T \tau = \delta \tau$ respectively. The spaces $H\Lambda^k(\mathcal T)$ and $H^\star \Lambda^k(\mathcal T)$ are equipped with the inner products
\begin{align*}
    \langle \tau ,\varphi \rangle _{H} =& \langle \tau,\varphi \rangle _{L^2} + \langle d^\mathcal T \tau , d^\mathcal T\varphi \rangle_{L^2} \\
     \langle \tau ,\varphi \rangle _{H^\star} =& \langle \tau,\varphi \rangle _{L^2} + \langle \delta^\mathcal T \tau , \delta^\mathcal T\varphi \rangle_{L^2}
\end{align*} 
and the usual induced norms obtained by taking square roots. 
\subsection{Hodge decomposition.}
Let $\harmonic \Lambda^k(M)=\left\{\omega \in L^2 \Lambda^k(M) \mid d\omega=0 \text{ and } \delta\omega=0\right\}$ denote the harmonic forms. Throughout, $\harmonic$ is short for ``harmonic'', just as $C^\infty$ is short for ``smooth''. Since the manifold $M$ is compact, the exterior derivative gives rise to a closed Hilbert complex, and we therefore have a Hodge decomposition \parencite[Theorem~4.5]{arnold2018finite}.
\begin{thm}
    Let $M$ be a compact Riemannian manifold without boundary. Then for each $0\leq k \leq n$, we have an $L^2$-orthogonal decomposition
    \[
      L^2\Lambda^k(M) =   dH\Lambda^{k-1}(M) \oplus \delta H^\star \Lambda ^{k+1}(M) \oplus \harmonic \Lambda^k(M)
    \]
\end{thm}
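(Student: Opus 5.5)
The plan is to derive the decomposition from the abstract Hilbert complex framework, by checking that the de Rham complex on $M$ is a closed Hilbert complex. Consider the complex
\[
0\to H\Lambda^0(M)\xrightarrow{d} H\Lambda^1(M)\xrightarrow{d}\cdots\xrightarrow{d} H\Lambda^n(M)\to 0 .
\]
Each $d\colon L^2\Lambda^k(M)\to L^2\Lambda^{k+1}(M)$, with domain $H\Lambda^k(M)$, is a closed, densely defined unbounded operator. It is densely defined because $C^\infty\Lambda^k(M)\subset H\Lambda^k(M)$. It is closed because $d$ is defined weakly in Definition~\ref{def: Sobolev spaces}: if $\tau_m\to\tau$ and $d\tau_m\to\alpha$ in $L^2$, we may pass to the limit in $\langle\tau_m,\delta\varphi\rangle=\langle d\tau_m,\varphi\rangle$. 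Since $\partial M=\emptyset$, the test space $C_0^\infty\Lambda^{k+1}(M)$ is simply $C^\infty\Lambda^{k+1}(M)$.

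The key steps are as follows.

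First, identify the Hilbert adjoint $d^\ast$ of $d_{k-1}$ with the weak codifferential $\delta$, and its domain with $H^\star\Lambda^k(M)$. On a closed manifold, $\int_M d\tau\wedge\star\varphi=\int_M\tau\wedge\star\delta\varphi$ holds for smooth forms by Stokes' theorem, with no boundary term. The domain of the adjoint is $\{\varphi : \tau\mapsto\langle d\tau,\varphi\rangle \text{ is } L^2\text{-bounded}\}$, and a density argument (mollification in charts with a partition of unity) shows this set equals the weak definition of $H^\star\Lambda^k(M)$.

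Second, apply the general Hilbert space fact that $L^2=\overline{\im d_{k-1}}\oplus\ker d_{k-1}^\ast$ and $\ker d_k=\overline{\im d_{k-1}}\oplus(\ker d_k\cap\ker d_{k-1}^\ast)$. This gives the weak Hodge decomposition
\[
L^2\Lambda^k=\overline{dH\Lambda^{k-1}}\oplus\overline{\delta H^\star\Lambda^{k+1}}\oplus\harmonic\Lambda^k .
\]
Orthogonality of the first two pieces follows from $d\circ d=0$ and the adjointness above, and the harmonic space is exactly the orthogonal complement of the first two.

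Third, and this is the main obstacle, show that the ranges $dH\Lambda^{k-1}(M)$ and $\delta H^\star\Lambda^{k+1}(M)$ are closed, so the closures can be dropped. The approach is the standard compactness route (as in \textcite{arnold2018finite}): prove that the inclusion $H\Lambda^k(M)\cap H^\star\Lambda^k(M)\hookrightarrow L^2\Lambda^k(M)$ is compact. On a closed manifold, $H\Lambda^k\cap H^\star\Lambda^k=H^1\Lambda^k$ by elliptic regularity for $d+\delta$; this can be verified locally in charts via Gaffney's inequality with no boundary terms. Rellich's theorem, applied on the finitely many charts of the compact $M$, then gives compactness. By the abstract result (compactness property $\Rightarrow$ the Hilbert complex is closed, with finite-dimensional harmonic forms), a Poincaré inequality $\|\tau\|\le C\|d\tau\|$ holds for $\tau\perp\ker d$. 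This inequality implies that the range of $d$ is closed, and by the closed range theorem the range of $\delta=d^\ast$ is closed as well. Substituting these closed ranges into the second step yields the stated orthogonal decomposition for every $0\le k\le n$.
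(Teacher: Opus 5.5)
Your proposal is correct and takes the same route as the paper, which simply asserts that the de Rham complex on the compact manifold $M$ is a closed Hilbert complex and then cites the abstract Hodge decomposition theorem for closed Hilbert complexes. What you add is the justification behind that citation: identifying $d^\ast$ with the weak $\delta$ via density of smooth forms, and deducing closed ranges from the compactness property (Gaffney/elliptic regularity plus Rellich).
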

For a general differential form $\tau\in L^2\Lambda^k(M)$ we can write it as the sum $\tau = df + \delta g +  h$. For a general element $\tau$, we let $\harmonic \tau$ denote the projection of $\tau$ onto $\harmonic \Lambda^k(M)$. We then have that $\harmonic \tau = h = \harmonic h $. 

\subsection{Weak jumps and traces.}
\todo[inline,disable]{As per my comment following the definition of the jumps, is this really well-defined?}
Suppose the differential forms appearing in \eqref{eq: jumpdef} are real valued. Then \Cref{eq: jumpdef} is of the form
\begin{equation}\label{eq: weak jump smooth}
    \begin{aligned}
    \ljump \omega^\tangent,\varphi\rjump &= \sum _{\varepsilon_{ji} =1}\int_f ( i^\ast_f \omega_j-i^\ast_f\omega_i )\wedge i_f^\ast(\star\varphi)\\  
  \ljump q, \omega^\normal\rjump &= \sum _{\varepsilon_{ji} =1} \int_f i^\ast_f q\wedge(i^\ast_f( \star \omega_j)-i^\ast_f(\star\omega_i)) \\
    \end{aligned}
\end{equation}
Concentrating on the tangential jump first, we have by \eqref{eq: epsdef}, Stokes theorem, and the identity $\star \delta = (-1)^{k+1}d\star$ for $(k+1)$-forms
\begin{equation}\label{eq: Stokes tangent jump }
    \begin{aligned}
    \ljump \omega^\tangent,\varphi  \rjump=& 
    \sum _{\varepsilon_{ji} =1}\int_f ( i^\ast_f \omega_j-i^\ast_f\omega_i )\wedge i_f^\ast(\star\varphi)\\ =&
    \sum_{T_i \in \mathcal T} \int_{\partial T_i} i^\ast \omega_i \wedge i^\ast (\star\varphi) \\
=& \sum_{T_i \in \mathcal T} \int_{T_i} d\omega_i \wedge \star\varphi  + (-1)^k \omega_i \wedge d(\star \varphi)\\
=& \sum_{T_i\in \mathcal T} \int_{T_i} d\omega_i \wedge \star\varphi  -\omega_i \wedge \star \delta\varphi
    \end{aligned}
\end{equation}
The last line is well-defined even when $\omega \in H\Lambda^k(\mathcal T)$ and $\varphi \in H^\star\Lambda^{k+1}(\mathcal T)$.
We can do a similar thing for the normal jump
\begin{equation}\label{eq: Stokes normal jump }
    \begin{aligned}
    \ljump q,\omega^\normal \rjump=& 
    \sum _{\varepsilon_{ji} =1}\int_f  i_f^\ast q\wedge( i^\ast_f (\star\omega_j)-i^\ast_f(\star \omega_i ))\\ =&
    \sum_{T_i \in \mathcal T} \int_{\partial T_i} i^\ast q \wedge i^\ast (\star \omega_i)  \\
=& \sum_{T_i \in \mathcal T} \int_{T_i} dq \wedge \star\omega_i  + (-1)^{k-1} q \wedge d(\star \omega_i)\\
=& \sum_{T_i\in \mathcal T} \int_{T_i} dq \wedge \star\omega_i  -q \wedge \star \delta\omega_i
    \end{aligned}
\end{equation}
where, again, the last line is well-defined even when $\omega \in H^\star\Lambda^k(\mathcal T)$ and $q\in H\Lambda^{k-1}(\mathcal T)$. Both equations have the same form, which we use to define a bilinear map.
\begin{definition}
     The \emph{jump form} is the bilinear form $\ljump \cdot, \cdot \rjump_{\mathcal T} \colon H\Lambda^k(\mathcal{T})\times H^\star \Lambda^{k+1}(\mathcal T) \to \mathbb R$ defined by 
\begin{equation*}
\begin{aligned}
    \ljump \tau , \varphi \rjump_{\mathcal T} = & \sum_{i} \int_{T_i}d\tau _i\wedge \star \varphi_i  - \tau_i \wedge \star \delta \varphi_i \\ = &
    \langle d^\mathcal T \tau , \varphi \rangle_{L^2} - \langle \tau, \delta ^\mathcal T \varphi \rangle _{L^2} \, . 
\end{aligned}
\end{equation*}

\end{definition}
Here we have taken the domain to be all of $H\Lambda^k(\mathcal{T})\times H^\star \Lambda^{k+1}(\mathcal T)$, even though we are mostly interested in the cases where one of the arguments is in a non-broken space. Since $\mathcal T$ is a fixed triangulation on $M$, we omit the subscript $\mathcal T$ for this triangulation but retain it for refinements $\mathcal T_h$.

\begin{lemma}
The jump form $\ljump \cdot, \cdot \rjump \colon H\Lambda^k(\mathcal{T})\times H^\star \Lambda^{k+1}(\mathcal T) \to \mathbb R$ is continuous. 
\end{lemma}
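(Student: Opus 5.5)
The plan is to bound the jump form directly from its definition using Cauchy--Schwarz in $L^2$. Since the form is bilinear, continuity is equivalent to boundedness. So it suffices to find a constant $C$ (we expect $C=1$ up to a factor $\sqrt 2$) with
\[
\lvert \ljump \tau,\varphi\rjump\rvert \le C\,\|\tau\|_{H}\,\|\varphi\|_{H^\star}
\quad\text{for all } \tau\in H\Lambda^k(\mathcal T),\ \varphi\in H^\star\Lambda^{k+1}(\mathcal T).
\]

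First I will make sure every term is well defined. For $\tau\in H\Lambda^k(\mathcal T)$ we have $d^{\mathcal T}\tau\in L^2\Lambda^{k+1}(M)$, viewed in $L^2\Lambda^{k+1}(M)$ via the direct sum over cells. Similarly $\delta^{\mathcal T}\varphi\in L^2\Lambda^k(M)$. Both pairings in
\[
\ljump \tau,\varphi\rjump=\langle d^{\mathcal T}\tau,\varphi\rangle_{L^2}-\langle \tau,\delta^{\mathcal T}\varphi\rangle_{L^2}
\]
are therefore ordinary $L^2$ inner products of square-integrable forms. Here the $L^2$ norm on the broken space is just the sum of the cellwise $L^2$ norms, since the facets have measure zero.

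Next I apply Cauchy--Schwarz to each term and obtain
\[
\lvert\ljump \tau,\varphi\rjump\rvert\le \|d^{\mathcal T}\tau\|_{L^2}\|\varphi\|_{L^2}+\|\tau\|_{L^2}\|\delta^{\mathcal T}\varphi\|_{L^2}.
\]
Applying the discrete Cauchy--Schwarz inequality in $\RR^2$ to the vectors $(\|d^{\mathcal T}\tau\|,\|\tau\|)$ and $(\|\varphi\|,\|\delta^{\mathcal T}\varphi\|)$ bounds the right-hand side by
\[
\bigl(\|\tau\|_{L^2}^2+\|d^{\mathcal T}\tau\|_{L^2}^2\bigr)^{1/2}\bigl(\|\varphi\|_{L^2}^2+\|\delta^{\mathcal T}\varphi\|_{L^2}^2\bigr)^{1/2}=\|\tau\|_H\|\varphi\|_{H^\star}.
\]
This gives the bound with $C=1$.

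There is no real obstacle here. The only point needing care is that the definition uses the weak, cellwise $d$ and $\delta$, and the broken norms are exactly the sums of the cellwise norms. The argument never uses traces on facets; working only with volume integrals is precisely what makes the jump form continuous on the full broken spaces. The same argument extends verbatim to $\lie g$-valued forms, using the pointwise inner product \eqref{eq: innerprod}.
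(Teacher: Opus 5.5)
Your proof is correct and is essentially the paper's argument: Cauchy--Schwarz on each of the two terms, followed by Cauchy--Schwarz in $\RR^2$, giving the bound with constant $1$. The only difference is bookkeeping. The paper applies these inequalities cellwise and then sums over cells with a discrete Cauchy--Schwarz. You apply them directly to the global broken $L^2$ pairings, which is slightly shorter and equivalent, since the squared broken norms are sums of squared cellwise norms (not sums of norms, as you wrote in passing).
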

\begin{proof}
For $\tau \in H\Lambda^k(\mathcal{T})$ and $\varphi \in H^\star \Lambda^{k+1}(\mathcal T)$, we have
\begin{equation*}\begin{aligned}
  | \ljump \tau , \varphi \rjump | \leq  &   \sum _{T_i\in \mathcal T} \big( \|d\tau_i\|_{L^2\Lambda(T_i)} \|\varphi_i\|_{L^2\Lambda(T_i)} + \|\tau_i\|_{L^2\Lambda(T_i)} \|\delta\varphi_i \|_{L^2\Lambda(T_i)} \big )
  \\  \leq &
    \sum _{T_i\in \mathcal T}\big\{ \|\tau_i\|_{L^2\Lambda(T_i)}^2 + \|d\tau_i\|_{L^2\Lambda(T_i)} ^2\big\}^{1/2}\big\{\|\varphi_i\|_{L^2\Lambda(T_i)} ^2+  \|\delta\varphi _i\|_{L^2\Lambda(T_i)}^2\big\}^{1/2}\\ 
  \leq & \Big\{ \sum _{T_i\in \mathcal T}\| \tau_i \|_{H\Lambda(T_i) }^2\Big\}^{1/2}
   \Big\{ \sum _{T_i \in \mathcal T} \| \varphi_i \|^2 _{H^\star\Lambda(T_i)}\Big\} ^{1/2} \\ = &  \| \tau \| _{H\Lambda(\mathcal T)} \| \varphi \|_{H^\star\Lambda(\mathcal T)}.
\end{aligned}
\label{eq: jumpineq}
\end{equation*}
\end{proof}
We can identify the non-broken spaces as the kernels of this bilinear map.
\begin{prop}

\label{lem: nojump}
We can identify
\begin{align*}
    H\Lambda ^k(M) = & \{ \tau \in H\Lambda ^k(\mathcal T) : \ljump \tau, \varphi \rjump = 0 \quad \forall \varphi \in H^\star \Lambda^{k+1}(M)\}\\
    H^\star \Lambda^k(M) = &  \{\tau \in H^\star \Lambda ^ k (\mathcal T) : \ljump q, \tau \rjump = 0 \quad \forall q \in H\Lambda^{k-1}(M) \}
\end{align*}
\end{prop}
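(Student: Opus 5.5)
We prove the first identity; the second follows by the same argument with the roles of $d$ and $\delta$ exchanged, or by applying the Hodge star, which maps $H\Lambda^{k}$ to $H^\star\Lambda^{n-k}$ and exchanges $d$ and $\delta$ up to sign. The proof has two inclusions.

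\emph{Inclusion $\subseteq$.} Let $\tau\in H\Lambda^k(M)$. Then $d^{\mathcal T}\tau=d\tau$, so for $\varphi\in H^\star\Lambda^{k+1}(M)$ we have
\[
\ljump\tau,\varphi\rjump=\langle d\tau,\varphi\rangle_{L^2}-\langle\tau,\delta\varphi\rangle_{L^2}.
\]
We must show this vanishes, i.e.\ that $d$ and $\delta$ are adjoint on the unbroken spaces of a closed manifold. By the definition of the weak derivative, the identity holds for $\varphi\in C^\infty\Lambda^{k+1}(M)$; since $M$ has no boundary, $C^\infty_0=C^\infty$. To extend it to all $\varphi\in H^\star\Lambda^{k+1}(M)$ we use density of smooth forms in $H^\star\Lambda^{k+1}(M)$. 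This density is standard on a compact manifold (partition of unity plus mollification in charts), and it is equivalent to the fact, used above for the Hodge decomposition, that $\delta$ on $H^\star$ is the Hilbert adjoint of $d$ on $H\Lambda$. Continuity of the jump form (the preceding lemma) then lets us pass to the limit.

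\emph{Inclusion $\supseteq$.} Let $\tau\in H\Lambda^k(\mathcal T)$ with $\ljump\tau,\varphi\rjump=0$ for all $\varphi\in H^\star\Lambda^{k+1}(M)$. Put $\alpha:=d^{\mathcal T}\tau\in L^2\Lambda^{k+1}(M)$. Every $\varphi\in C^\infty\Lambda^{k+1}(M)$ lies in $H^\star\Lambda^{k+1}(M)$ and satisfies $\delta^{\mathcal T}\varphi=\delta\varphi$. The hypothesis therefore gives
\[
\langle\tau,\delta\varphi\rangle_{L^2}=\langle\alpha,\varphi\rangle_{L^2}\quad\text{for all smooth }\varphi,
\]
which is exactly the definition of $d\tau=\alpha\in L^2$ in \Cref{def: Sobolev spaces}. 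Hence $\tau\in H\Lambda^k(M)$. This direction needs only smooth test forms, so it is immediate.

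For the normal statement, the same two steps apply with $\ljump q,\tau\rjump=\langle dq,\tau\rangle-\langle q,\delta^{\mathcal T}\tau\rangle$. For $\supseteq$, test with smooth $q$; this identifies $\delta^{\mathcal T}\tau$ as the weak codifferential of $\tau$. For $\subseteq$, use density of smooth forms in $H\Lambda^{k-1}(M)$.

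The main obstacle is the density step in $\subseteq$, namely that $C^\infty\Lambda$ is dense in $H\Lambda(M)$ and in $H^\star\Lambda(M)$ on a closed Riemannian manifold. I would either cite it as the standard Friedrichs-type result, or note that it follows from the closed-Hilbert-complex framework of \textcite{arnold2018finite} already invoked for the Hodge decomposition, where the weak $d$ and $\delta$ are mutually adjoint.
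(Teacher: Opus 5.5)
Your argument is correct and is essentially the paper's: the nontrivial inclusion comes from testing the vanishing jump against smooth $\varphi$, which is exactly the weak definition of $d\tau = d^{\mathcal T}\tau\in L^2\Lambda^{k+1}(M)$. The difference is where density of smooth forms in $H^\star\Lambda^{k+1}(M)$ enters. The paper calls the inclusion of $H\Lambda^k(M)$ in the kernel trivial and invokes density in the other direction, where only smooth test forms are needed. You instead correctly place density in the first inclusion: it is the Friedrichs-type adjointness of weak $d$ and $\delta$ on a closed manifold, which that inclusion requires.
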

\begin{proof}
We only prove the first equality. The proof of the second is analogous.

It is trivial that $H\Lambda^k(M)$ is contained in the kernel.
To prove the other inclusion, we use that $C^\infty \Lambda^{k+1}(M)$ is dense in $H^\star \Lambda^{k+1}(M)$. We therefore have
\[
\begin{aligned}
\ljump \tau , \varphi \rjump &= \sum _{T_i \in \mathcal T} \int_{T_i} \langle d\tau_i, \varphi\rangle -\langle\tau, \delta \varphi _i\rangle \vol\\
&= \int_M \langle \alpha, \varphi\rangle- \langle \tau, \delta \varphi\rangle \vol =0
\end{aligned}
\]
where $\alpha=\sum_{T_i \in \mathcal T}d\tau_i$, for all $\varphi\in C^\infty \Lambda^{k+1}(M)$.
As $\alpha\in L^2\Lambda^{k+1}(M)$, this proves that $\tau \in H\Lambda^{k}(M)$.
\end{proof}
\begin{definition}
    For $\tau \in H\Lambda^{k}(\mathcal{T})$ its \emph{tangential jump}  $\ljump \tau^\tangent\rjump $ is the linear functional acting on $H^\star \Lambda^{k+1}(M)$ by $\varphi \mapsto \ljump \tau, \varphi\rjump$.
Likewise, for $\tau \in H^\star \Lambda^{k}(\mathcal{T})$, the \emph{normal jump} $\ljump \tau ^\normal\rjump \colon \varphi \mapsto  \ljump \varphi, \tau\rjump$ is a linear functional acting on  $H\Lambda^{k-1}(M)$. This generalizes \Cref{eq: jumpdef}.
\end{definition}

\subsection{Refinement of the triangulation.}
We need to consider conforming refinements of a triangulation $\mathcal T$. Let $\mathcal T_h$ denote a refinement of $\mathcal{T}$ where $h$ is the diameter of the triangulation. All the constructions on $\mathcal T$ can also be carried out for $\mathcal T_h$. Let $\ljump  \cdot , \cdot \rjump_{\mathcal T_h}  \colon H\Lambda^k(\mathcal T_h) \times H^\star \Lambda^{k+1}(\mathcal T_h) \to \mathbb R $ denote the refined jump form. It is clear that $H\Lambda ^k(\mathcal T) \subset H\Lambda^k(\mathcal T_h)$, and $H^\star \Lambda^{k+1} (\mathcal T) \subset H^\star\Lambda^{k+1}(\mathcal T_h)$.
The next lemma shows that the refined jump form extends the original jump form in a natural sense.
\begin{lemma}\label{lem: Refining does not add jumps}
    If $\tau\in H\Lambda^k(\mathcal T) $ and $\mu \in H ^\star \Lambda ^{k + 1}(\mathcal T)$, then
    $d^{\mathcal T_h}\tau =  d^{\mathcal T}\tau$ and $\delta^{\mathcal T_h}\mu =  \delta^{\mathcal T}\mu$. Furthermore $\ljump \tau , \mu \rjump_{\mathcal T_h} = \ljump \tau , \mu \rjump_{\mathcal T}$, $\ljump \tau ^\tangent \rjump_{\mathcal T_h} = \ljump \tau^\tangent \rjump_{\mathcal T}$ and $\ljump \mu ^\normal \rjump_{\mathcal T_h} = \ljump \mu^\normal \rjump_{\mathcal T}$
\end{lemma}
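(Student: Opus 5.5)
The plan is to handle the differential operators first, since everything else follows from them together with the definition of the jump form.

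Let $\tau\in H\Lambda^k(\mathcal T)$ and let $\tilde T\in\mathcal T_h$ be a refined cell, contained in the coarse cell $T_i\in\mathcal T$. Then $\tau|_{\tilde T}=\tau_i|_{\tilde T}$. The weak derivative $d\tau_i\in L^2\Lambda^{k+1}(T_i)$ satisfies $\langle \tau_i,\delta\varphi\rangle_{L^2(T_i)}=\langle d\tau_i,\varphi\rangle_{L^2(T_i)}$ for all $\varphi\in C_0^\infty\Lambda^{k+1}(T_i)$. We test only with $\varphi\in C_0^\infty\Lambda^{k+1}(\mathring{\tilde T})$, extended by zero to $T_i$. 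This shows that $d\tau_i|_{\tilde T}$ is the weak exterior derivative of $\tau|_{\tilde T}$ on $\tilde T$. In particular $\tau|_{\tilde T}\in H\Lambda^k(\tilde T)$, which also justifies the inclusion $H\Lambda^k(\mathcal T)\subset H\Lambda^k(\mathcal T_h)$. Since the refined cells partition $T_i$ up to null sets, summing over $\tilde T\subset T_i$ and then over $i$ gives $d^{\mathcal T_h}\tau=d^{\mathcal T}\tau$ as elements of $L^2\Lambda^{k+1}(M)$. The identity $\delta^{\mathcal T_h}\mu=\delta^{\mathcal T}\mu$ follows by the identical argument with $d$ and $\delta$ interchanged.

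The jump identity is then a one-line computation from the second expression in the definition of the jump form:
\[
\ljump\tau,\mu\rjump_{\mathcal T_h}=\langle d^{\mathcal T_h}\tau,\mu\rangle_{L^2}-\langle\tau,\delta^{\mathcal T_h}\mu\rangle_{L^2}=\langle d^{\mathcal T}\tau,\mu\rangle_{L^2}-\langle\tau,\delta^{\mathcal T}\mu\rangle_{L^2}=\ljump\tau,\mu\rjump_{\mathcal T}.
\]
The $L^2$ inner products are global integrals over $M$ and do not depend on the mesh.

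For the jump functionals, we note that $\ljump\tau^\tangent\rjump_{\mathcal T_h}$ and $\ljump\tau^\tangent\rjump_{\mathcal T}$ are both functionals on the same space $H^\star\Lambda^{k+1}(M)$. Moreover $H^\star\Lambda^{k+1}(M)\subset H^\star\Lambda^{k+1}(\mathcal T)$, and on this subspace $\delta^{\mathcal T}$ agrees with the global $\delta$. Applying the previous identity with $\mu=\varphi\in H^\star\Lambda^{k+1}(M)$ therefore gives equality of the two functionals. The normal jump is handled the same way, using test forms $q\in H\Lambda^{k-1}(M)\subset H\Lambda^{k-1}(\mathcal T)$.

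The only step needing care is the restriction argument in the first paragraph, namely that restricting a weak derivative to a subdomain yields the weak derivative there. I expect this to be routine, because $C_0^\infty$ test forms on $\tilde T$ extend by zero to admissible test forms on $T_i$. The rest is bookkeeping.
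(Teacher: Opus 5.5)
Your proposal is correct and matches the paper's proof: both test against $\varphi\in C_0^\infty\Lambda^{k+1}$ supported in a single refined cell to identify $d^{\mathcal T_h}\tau$ with $d^{\mathcal T}\tau$ (and likewise for $\delta$), and then obtain the jump identities from $\ljump\tau,\mu\rjump_{\mathcal T_h}=\langle d^{\mathcal T_h}\tau,\mu\rangle_{L^2}-\langle\tau,\delta^{\mathcal T_h}\mu\rangle_{L^2}$, a step the paper leaves implicit. One small index slip: since $\mu$ is a $(k+1)$-form, $\ljump\mu^\normal\rjump$ acts on $q\in H\Lambda^{k}(M)\subset H\Lambda^{k}(\mathcal T)$, not on $H\Lambda^{k-1}(M)$; with that corrected, your argument goes through unchanged.
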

\begin{proof}
    The jump identities follow from the differential identities. We prove that $d^{\mathcal T _h}\tau = d^{\mathcal T}\tau$. Let $\varphi \in C^\infty_0\Lambda^{k+1}(T)$, where $T \in \mathcal T_h$. From the definitions of $d^{\mathcal T_h}$ and $d^{\mathcal T}$, we have
       \begin{equation*}
       \langle d^{\mathcal T_h}\tau , \varphi \rangle = \langle \tau, \delta \varphi \rangle =  \langle d^{\mathcal T}\tau , \varphi \rangle 
        \end{equation*}
        which shows that $d^{\mathcal T_h} \tau  = d^{\mathcal T}\tau $. 
\end{proof}
The following refinement analogue of \Cref{lem: nojump} shows that if a form in $H\Lambda^k(\mathcal T_h)$ has the same jumps as a form in $H\Lambda^k(\mathcal T)$, then it belongs to $H\Lambda^k(\mathcal T)$.
\begin{lemma}\label{lem: No refined jump}
Let $\tau^h \in H\Lambda^k(\mathcal T_h)$, and $\tau \in H\Lambda^k(\mathcal T)$. If
\[
    \ljump \tau^h - \tau , \mu\rjump_{\mathcal T_h} = 0  
\]
for all $\mu \in H^\star \Lambda^{k+1}(M)$, then $\tau^h \in H\Lambda^k(\mathcal T)$. 
\end{lemma}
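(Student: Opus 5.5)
The plan is to reduce the claim to a local statement on each coarse cell $T\in\mathcal T$ and then imitate the argument of \Cref{lem: nojump} cell by cell. Set $\rho = \tau^h - \tau \in H\Lambda^k(\mathcal T_h)$; this uses $H\Lambda^k(\mathcal T)\subset H\Lambda^k(\mathcal T_h)$. Since $H\Lambda^k(\mathcal T)$ is a vector space containing $\tau$, it suffices to show $\rho\in H\Lambda^k(\mathcal T)$, i.e.\ that for every $T\in\mathcal T$ the restriction $\rho|_T$ lies in $H\Lambda^k(T)$. The candidate weak derivative is $\alpha = d^{\mathcal T_h}\rho$, restricted to $T$. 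This lies in $L^2\Lambda^{k+1}(T)$ because it is square integrable on each fine cell $\tilde T\subset T$, and there are finitely many of them.

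The key step is to verify, for a fixed coarse cell $T$ and every $\varphi\in C^\infty_0\Lambda^{k+1}(T)$, that
\[
\int_T \langle \alpha,\varphi\rangle - \langle \rho, \delta\varphi\rangle \vol = 0 .
\]
To do this, extend $\varphi$ by zero to a form on $M$. Since $\varphi$ has compact support in the interior of $T$, the extension is smooth on $M$, hence lies in $C^\infty\Lambda^{k+1}(M)\subset H^\star\Lambda^{k+1}(M)$, and its $\delta$ is the zero extension of $\delta\varphi$. The hypothesis then gives $\ljump \rho,\varphi\rjump_{\mathcal T_h}=0$. Expanding the definition of the jump form,
\[
0=\ljump \rho,\varphi\rjump_{\mathcal T_h} = \langle d^{\mathcal T_h}\rho,\varphi\rangle_{L^2} - \langle \rho,\delta^{\mathcal T_h}\varphi\rangle_{L^2}= \int_T \langle \alpha,\varphi\rangle - \langle\rho,\delta\varphi\rangle\vol,
\]
where all contributions from fine cells outside $T$ vanish because $\varphi$ is zero there. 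This is exactly the weak definition of $d(\rho|_T)=\alpha|_T\in L^2\Lambda^{k+1}(T)$, so $\rho|_T\in H\Lambda^k(T)$. Summing over $T$ gives $\rho\in H\Lambda^k(\mathcal T)$, and therefore $\tau^h = \tau+\rho\in H\Lambda^k(\mathcal T)$.

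The only delicate point is the choice of test space. The definition of $H\Lambda^k(T)$ on a coarse cell uses test forms compactly supported in $T$, and we must check that these are admissible in the hypothesis, which is stated for $\mu\in H^\star\Lambda^{k+1}(M)$. The zero-extension observation settles this: compact support in the interior of $T$ makes the extension globally smooth, with no boundary terms arising on $\partial T$. Beyond that, the argument only needs $\alpha$ to be well defined as a piecewise $L^2$ form, which is immediate.

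As an alternative route, one could first use \Cref{lem: Refining does not add jumps} to rewrite the hypothesis as $\ljump\tau^h,\mu\rjump_{\mathcal T_h}=\ljump\tau,\mu\rjump_{\mathcal T}$ for all $\mu$. The difference argument above makes this rewriting unnecessary.
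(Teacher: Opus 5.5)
Your proof is correct and rests on the same mechanism as the paper's: testing the jump form against smooth global forms exhibits the broken derivative $d^{\mathcal T_h}$ as a weak derivative. The two arguments differ in how that mechanism is used.

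The paper applies \Cref{lem: nojump} on the refined mesh $\mathcal T_h$ to $\tau^h-\tau$. This gives at once the stronger fact $\tau^h-\tau\in H\Lambda^k(M)$, and the conclusion follows from $H\Lambda^k(M)\subset H\Lambda^k(\mathcal T)$.

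You instead rerun that argument on each coarse cell, using test forms compactly supported in its interior; your zero-extension remark is exactly what makes these test forms admissible. This makes your argument self-contained. It also shows that a weaker hypothesis would suffice: vanishing of $\ljump\tau^h-\tau,\mu\rjump_{\mathcal T_h}$ only for $\mu$ supported inside single coarse cells already rules out jumps across the facets that are new in $\mathcal T_h$.

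The price is that you only obtain $\tau^h-\tau\in H\Lambda^k(\mathcal T)$, not the global conformity $\tau^h-\tau\in H\Lambda^k(M)$ that the full hypothesis encodes. A one-line citation of \Cref{lem: nojump} for $\mathcal T_h$ would also have done the job.
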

\begin{proof}
    By \Cref{lem: nojump}, $\tau^h - \tau \in H\Lambda^k(M)$, and $\tau^h = \tau +(\tau^h -\tau) \in H\Lambda^k(\mathcal T)$.
\end{proof}

\subsection{Jump spaces and their duals.}
\label{sec: Gspaces}

Having identified the non-broken spaces as kernels of the jump form $\ljump \cdot, \cdot \rjump $, we proceed to define their orthogonal complements as ``jump'' spaces, where tangential and normal jumps live.
\begin{align*}
\mathcal J \Lambda^k(\mathcal T)&= (H\Lambda^k(M))^{\perp_{H}} \subset H\Lambda ^k(\mathcal T), \\
\mathcal J^\star \Lambda^{k+1}(\mathcal{T}) &=  (H^\star \Lambda^{k+1}(M))^{\perp_{H^\star}} \subset H^\star\Lambda ^{k+1}(\mathcal T). \
\end{align*}
We now want to define effective spaces dual to the jump spaces. That is, we define 
$\mathcal K^\star \Lambda^{k+1}(\mathcal T)$ and $\mathcal{K} \Lambda^{k}(\mathcal{T})$ to be spaces such that the restrictions of the jump form
\[\begin{aligned}
\ljump \cdot, \cdot \rjump &\from \mathcal J \Lambda^k(\mathcal T)\times \mathcal K^\star \Lambda^{k+1}(\mathcal T)\to \mathbb{R}\\
\ljump \cdot, \cdot \rjump &\from \mathcal K \Lambda^k(\mathcal T)\times \mathcal J^\star \Lambda^{k+1}(\mathcal T)\to \mathbb{R}
\end{aligned}\]
are non-degenerate. 

\begin{definition}
\label{def: Kdef}
Define $\mathcal K^\star \Lambda^{k+1}(\mathcal T)\subset H^\star \Lambda^{k+1}(M)$ to be the $H^\star$-orthogonal complement to the right kernel of
\[\ljump \cdot, \cdot \rjump\from H\Lambda^k(\mathcal{T})\times H^\star\Lambda^{k+1}(M)\to \mathbb{R}.\]

That is, $\alpha \in \mathcal K^\star \Lambda^{k+1}(\mathcal T) \subset H^\star \Lambda^{k+1}(M)$ if and only if
\[
\ljump \omega, \phi\rjump =0 \ \text{for all }\omega\in H \Lambda^k (\mathcal{T})
\quad\Longrightarrow\quad \langle\alpha, \phi\rangle_{H^\star}=0
\quad\text{for every }\phi\in H^\star\Lambda^{k+1}(M).
\]

Similarly, we define $\mathcal K\Lambda^{k}(\mathcal{T})$ to be the $H$-orthogonal complement of the left kernel of
\[\ljump \cdot, \cdot \rjump\from H\Lambda^k(M)\times H^\star\Lambda^{k+1}(\mathcal T)\to \mathbb{R}.\]
\end{definition}
We can use these spaces to determine if forms have tangential and normal continuity across facets, similar to \Cref{lem: nojump}
\begin{prop}\label{lem: nojump K}
We can identify
\begin{align*}
    H\Lambda ^k(M) = & \{ \tau \in H\Lambda ^k(\mathcal T) : \ljump \tau, \varphi \rjump = 0 \quad \forall \varphi \in \mathcal K^\star \Lambda^{k+1}(\mathcal T)\}\\
    H^\star \Lambda^k(M) = &  \{\tau \in H^\star \Lambda ^ k (\mathcal T) : \ljump q, \tau \rjump = 0 \quad \forall q \in \mathcal K\Lambda^{k-1}(\mathcal T) \}
\end{align*}
\end{prop}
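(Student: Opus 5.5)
We reduce to \Cref{lem: nojump} by showing that testing against $\mathcal K^\star\Lambda^{k+1}(\mathcal T)$ is as strong as testing against all of $H^\star\Lambda^{k+1}(M)$. We prove only the first identity; the second follows by the same argument with the roles of $d$ and $\delta$, and of left and right kernels, interchanged.

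The inclusion $H\Lambda^k(M)\subseteq\{\tau : \ljump\tau,\varphi\rjump=0\ \forall\varphi\in\mathcal K^\star\Lambda^{k+1}(\mathcal T)\}$ is immediate. By \Cref{lem: nojump}, any $\tau\in H\Lambda^k(M)$ satisfies $\ljump\tau,\varphi\rjump=0$ for all $\varphi\in H^\star\Lambda^{k+1}(M)$. Since $\mathcal K^\star\Lambda^{k+1}(\mathcal T)\subset H^\star\Lambda^{k+1}(M)$ by \Cref{def: Kdef}, the vanishing holds in particular on $\mathcal K^\star\Lambda^{k+1}(\mathcal T)$.

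For the reverse inclusion, let $N\subset H^\star\Lambda^{k+1}(M)$ denote the right kernel
\[
N=\{\phi\in H^\star\Lambda^{k+1}(M) : \ljump\omega,\phi\rjump=0\ \forall\omega\in H\Lambda^k(\mathcal T)\}.
\]
We first check that $N$ is a closed subspace of $H^\star\Lambda^{k+1}(M)$. It is an intersection of kernels of the functionals $\phi\mapsto\ljump\omega,\phi\rjump$, each of which is continuous by the continuity lemma for the jump form. Since $H^\star\Lambda^{k+1}(M)$ is a Hilbert space and $N$ is closed, we obtain the orthogonal decomposition
\[
H^\star\Lambda^{k+1}(M)=N\oplus N^{\perp_{H^\star}}=N\oplus\mathcal K^\star\Lambda^{k+1}(\mathcal T).
\]

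Now let $\tau\in H\Lambda^k(\mathcal T)$ with $\ljump\tau,\varphi\rjump=0$ for all $\varphi\in\mathcal K^\star\Lambda^{k+1}(\mathcal T)$, and take an arbitrary $\varphi\in H^\star\Lambda^{k+1}(M)$. Write $\varphi=\varphi_N+\varphi_K$ with $\varphi_N\in N$ and $\varphi_K\in\mathcal K^\star\Lambda^{k+1}(\mathcal T)$. Bilinearity gives
\[
\ljump\tau,\varphi\rjump=\ljump\tau,\varphi_N\rjump+\ljump\tau,\varphi_K\rjump .
\]
The first term vanishes because $\tau\in H\Lambda^k(\mathcal T)$ and $\varphi_N\in N$. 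The second term vanishes by the hypothesis on $\tau$. Hence $\ljump\tau,\varphi\rjump=0$ for every $\varphi\in H^\star\Lambda^{k+1}(M)$, and \Cref{lem: nojump} yields $\tau\in H\Lambda^k(M)$.

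The only point needing care is the closedness of $N$, which is what makes the orthogonal decomposition available. This follows directly from the continuity lemma, so we expect no genuine obstacle in this argument.
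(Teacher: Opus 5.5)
Your proof is correct and follows the reasoning the paper intends: the paper gives no separate proof of this proposition and treats it as an immediate consequence of \Cref{def: Kdef} and \Cref{lem: nojump}, and your splitting of $H^\star\Lambda^{k+1}(M)$ into the right kernel $N$ and its $H^\star$-orthogonal complement $\mathcal K^\star\Lambda^{k+1}(\mathcal T)$ is what makes that reduction work. Your closedness check via the continuity lemma is valid, because on $H^\star\Lambda^{k+1}(M)$ the broken norm coincides with the $H^\star\Lambda^{k+1}(M)$ norm, since $\delta^{\mathcal T}\varphi=\delta\varphi$ there.
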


The spaces $\mathcal K^\star\Lambda^{k+1}(\mathcal T)$ and $\mathcal K\Lambda^k(\mathcal T)$ defined above can also be shown to be solutions to certain partial differential equations.
\begin{thm}
The identifications
 \begin{align*}
        \mathcal K  \Lambda ^k(\mathcal T)  =& \{ \tau \in H\Lambda ^k(M) : d\tau\in H^\star \Lambda ^{k+1}(\mathcal T) \; \text{ and }  \; \delta^\mathcal T d \tau = -\tau \} \\
          \mathcal K^\star  \Lambda ^{k+1}(\mathcal T)  =& \{ \tau \in H^\star \Lambda ^{k+1}(M) : \delta \tau\in H \Lambda ^{k}(\mathcal T) \; \text{ and }  \; d^\mathcal T \delta \tau = -\tau \} \\
            \mathcal J \Lambda ^k(\mathcal T)  =& \{ \tau \in H\Lambda ^k(\mathcal T) : d^\mathcal T\tau\in H^\star \Lambda ^{k+1}(M) \; \text{ and }  \; \delta d^\mathcal T \tau = -\tau \} \\
          \mathcal J^\star  \Lambda ^{k+1}(\mathcal T)  =& \{ \tau \in H^\star \Lambda ^{k+1}(\mathcal T) : \delta^\mathcal T \tau\in H \Lambda ^{k}(M) \; \text{ and }  \; d \delta^\mathcal T \tau = -\tau \} \\
    \end{align*}
    all hold.
\end{thm}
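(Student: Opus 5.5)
Each of the four identifications is an orthogonal complement in a Hilbert space, rewritten through the defining inner product and the jump form $\ljump \tau,\varphi\rjump = \langle d^\mathcal T\tau,\varphi\rangle_{L^2} - \langle\tau,\delta^\mathcal T\varphi\rangle_{L^2}$. I will treat $\mathcal J\Lambda^k(\mathcal T)$ in detail. The case $\mathcal J^\star$ follows by interchanging the roles of $d$ and $\delta$. The cases $\mathcal K$ and $\mathcal K^\star$ need one extra step, which I handle at the end.

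\emph{Case $\mathcal J\Lambda^k(\mathcal T)$.} Let $\tau\in H\Lambda^k(\mathcal T)$. By definition $\tau\in\mathcal J\Lambda^k(\mathcal T)$ if and only if
\[
\langle \tau,\sigma\rangle_{L^2}+\langle d^\mathcal T\tau, d\sigma\rangle_{L^2}=0 \quad \text{for all } \sigma\in H\Lambda^k(M),
\]
using $d^\mathcal T\sigma=d\sigma$. Testing with $\sigma\in C^\infty\Lambda^k(M)$, this says exactly that $d^\mathcal T\tau\in L^2$ has weak codifferential $\delta d^\mathcal T\tau=-\tau\in L^2$. Hence $d^\mathcal T\tau\in H^\star\Lambda^{k+1}(M)$ and $\delta d^\mathcal T\tau=-\tau$. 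Conversely, if these hold, the identity holds for smooth $\sigma$ by the definition of the weak $\delta$. It then extends to all $\sigma\in H\Lambda^k(M)$ by density of smooth forms in $H\Lambda^k(M)$ on a compact manifold without boundary; this density is the same fact used in the proof of \Cref{lem: nojump}.

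\emph{Case $\mathcal K^\star\Lambda^{k+1}(\mathcal T)$.} Write $N=\{\phi\in H^\star\Lambda^{k+1}(M): \ljump\omega,\phi\rjump=0\ \forall\omega\in H\Lambda^k(\mathcal T)\}$ for the right kernel. Note $\ljump\omega,\phi\rjump=\langle d^\mathcal T\omega,\phi\rangle-\langle\omega,\delta\phi\rangle$.

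Taking $\omega$ supported in a single cell and smooth with compact support there, the condition $\phi\in N$ forces $\delta\phi\in H\Lambda^k(T_i)$ with $d\delta\phi=\phi$ on each cell. Then taking arbitrary $\omega\in H\Lambda^k(\mathcal T)$ forces the boundary terms to vanish. So $N$ consists of exactly those $\phi$ with $\delta\phi\in H\Lambda^k(\mathcal T)$, $d^\mathcal T\delta\phi=\phi$ and the appropriate vanishing traces. This characterization is awkward to invert directly, so I will instead use the map $\Phi\from H\Lambda^k(\mathcal T)\to H^\star\Lambda^{k+1}(M)$, defined by Riesz representation through
\[
\langle \Phi\omega,\phi\rangle_{H^\star}=\ljump\omega,\phi\rjump .
\]
$\Phi$ is bounded by the continuity lemma for the jump form. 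Then $N=(\im\Phi)^{\perp}$, so $\mathcal K^\star=N^\perp=\overline{\im\Phi}$.

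Next I compute $\im\Phi$. Set $\alpha=\Phi\omega$. The defining identity reads
\[
\langle\alpha,\phi\rangle+\langle\delta\alpha,\delta\phi\rangle=\langle d^\mathcal T\omega,\phi\rangle-\langle\omega,\delta\phi\rangle .
\]
Set $\beta=\delta\alpha+\omega\in H\Lambda^k(\mathcal T)$. The identity becomes $\langle\beta,\delta\phi\rangle=\langle d^\mathcal T\omega-\alpha,\phi\rangle$ for all $\phi$. Testing with smooth $\phi$ shows that $\beta$ has a global weak $d$, namely $\beta\in H\Lambda^k(M)$ with $d\beta=d^\mathcal T\omega-\alpha$. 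Substituting $\omega=\beta-\delta\alpha$ gives $d^\mathcal T\omega=d\beta-d^\mathcal T\delta\alpha$, hence $d^\mathcal T\delta\alpha=-\alpha$. This proves one inclusion: every $\alpha\in\im\Phi$ lies in the set $S=\{\alpha\in H^\star\Lambda^{k+1}(M): \delta\alpha\in H\Lambda^k(\mathcal T),\ d^\mathcal T\delta\alpha=-\alpha\}$.

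For the reverse direction, given $\alpha\in S$, choose $\omega=-\delta\alpha$. Reversing the computation with $\beta=0$ shows $\Phi\omega=\alpha$. So $\im\Phi=S$. Finally, $S$ is closed: the graph of $d^\mathcal T$ is closed, and the constraint $d^\mathcal T\delta\alpha=-\alpha$ gives $\|\delta\alpha\|_{H}\le\|\alpha\|_{H^\star}$. Therefore $\overline{\im\Phi}=S$, which is the claimed identification. The case $\mathcal K\Lambda^k(\mathcal T)$ is symmetric.

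The main obstacle is the $\mathcal K$ and $\mathcal K^\star$ cases: identifying the complement of the kernel with the image of the Riesz map, and verifying that this image is closed. I expect the $\mathcal J$ and $\mathcal J^\star$ cases to be routine.
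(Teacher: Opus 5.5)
Your proof is correct. For $\mathcal J\Lambda^k(\mathcal T)$ and $\mathcal J^\star\Lambda^{k+1}(\mathcal T)$ it is essentially the paper's argument. You test orthogonality against smooth forms to get the weak equation, and for the converse you use that two unbroken forms have no jump (your density argument).

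For $\mathcal K$ and $\mathcal K^\star$ you take a genuinely different route. The paper writes out only the $\mathcal K$ case, but its argument transposes directly to $\mathcal K^\star$, and it never needs to describe the kernel. It only uses that forms $\phi\in C_0^\infty\Lambda^{k+1}(T)$ supported inside a single cell lie in the right kernel $N$. Orthogonality of $\alpha$ to these forms says exactly that $d(\delta\alpha)=-\alpha$ weakly on each cell. Conversely, for $\alpha$ in your set $S$ one has $\langle\alpha,\phi\rangle_{H^\star}=-\ljump\delta\alpha,\phi\rjump$, which vanishes for $\phi\in N$ by definition of $N$. That takes two lines and needs no closed-range argument, since an orthogonal complement is automatically closed.

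Your route, $\mathcal K^\star=N^\perp=\overline{\im\Phi}$ with $\im\Phi=S$ and $S$ closed, costs more but yields more. The identity $\Phi(-\delta\alpha)=\alpha$, i.e.\ $\ljump\delta\alpha,\phi\rjump=-\langle\alpha,\phi\rangle_{H^\star}$ for all $\phi\in H^\star\Lambda^{k+1}(M)$, is exactly the duality the paper proves separately afterwards in \Cref{prop: Isometries} and \Cref{cor: isometry}.

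Two small corrections. First, in the paragraph you discard, the claim that $\phi\in N$ forces $\delta\phi\in H\Lambda^k(T_i)$ with $d\delta\phi=\phi$ is false. Testing with $\omega\in C_0^\infty\Lambda^k(T_i)$ gives no information, because every $\phi\in H^\star\Lambda^{k+1}(M)$ already satisfies $\langle d\omega,\phi\rangle=\langle\omega,\delta\phi\rangle$ for such $\omega$. The kernel $N$ consists of the forms whose normal traces vanish on all cell boundaries. The cellwise equation you wrote is, up to sign, the defining equation of $\mathcal K^\star$, not of $N$. You confused testing membership in $N$ with testing orthogonality to $N$, and that is why the direct route looked awkward to you. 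It does not affect your proof, since you abandon that paragraph.

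Second, $\beta=\delta\alpha+\omega$ is a priori only in $L^2\Lambda^k(M)$, not in $H\Lambda^k(\mathcal T)$. Your next step only uses $\beta\in L^2$ and then gives $\beta\in H\Lambda^k(M)$, so this is harmless.
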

\begin{proof}
We prove the first identity. The proofs of the other identities are analogous.
Let $\tau \in \mathcal K \Lambda^k(\mathcal T)$. If $\varphi\in C^\infty\Lambda^{k}(M)$ with support in the single simplex $T$, then $\tau$ is orthogonal to $\varphi$.
\[
    0 = \int_T \tau \wedge \star \varphi + d\tau \wedge \star d\varphi 
\]
which means that $\delta d \tau|_T = -\tau|_T $. Conversely, if $\delta d \tau|_T = -\tau|_T$, and $\varphi$ in the kernel of $\ljump \cdot{}^\tangent \rjump$, then
\[
        \langle \tau, \varphi  \rangle _{H } = \int_T \tau \wedge \star \varphi + d\tau \wedge\star d\varphi = \int_T d\varphi\wedge\star d\tau - \varphi \wedge \delta  d \tau  = \ljump \varphi, d\tau \rjump = 0 
    \]
\end{proof}
We will denote the tangential projections {${H\Lambda^k(\mathcal T) \to \mathcal J\Lambda ^k(\mathcal T)}$} and {${H\Lambda^k(M) \to \mathcal K\Lambda ^k(\mathcal T)}$} by $\tau \mapsto \tau ^\tangent$. Similarly the normal projection will be denoted by $\tau ^\normal$. The projections are such that for $\tau \in H\Lambda^k(\mathcal T) $ and $\varphi \in H^\star \Lambda^{k+1}(M)$ we have the identity $\ljump  \tau , \varphi \rjump = \ljump \tau^\tangent , \varphi^\normal \rjump$, which is the left side of the following commuting diagram
\begin{equation*}
    \begin{tikzcd}[column sep = 5em, row sep = 3em]
    H\Lambda ^k(\mathcal T)\times  H^\star \Lambda ^{k+1} (M) \ar[dr, "\ljump \cdot {,}\cdot \rjump", shift left = 0.2em]\ar[d,"\tangent \times \normal",swap] & & H\Lambda ^k(M)\times  H^\star \Lambda ^{k+1} (\mathcal T) \ar[dl, "\ljump \cdot {,}\cdot \rjump",swap, shift right = 0.2em]\ar[d,"\tangent \times \normal"] \\
          \mathcal J \Lambda ^k(\mathcal T) \times  \mathcal K^\star \Lambda ^{k+1} (\mathcal T) \ar[r , "\ljump \cdot {,} \cdot \rjump",swap]  & \mathbb R &  \mathcal K \Lambda ^k(\mathcal T)\times  \mathcal J^\star \Lambda ^{k+1} (\mathcal T)\ar[l , "\ljump \cdot {,} \cdot \rjump"]
    \end{tikzcd}
\end{equation*}
where the down-arrows are the projections. We have a similar identity for the right side. 

Let $\mathring{H}\Lambda^k(T)\subset H\Lambda^k(T)$ be the subspace with vanishing tangential trace and $\mathring{H}^\star\Lambda^{k+1}(T)\subset H^\star \Lambda^{k+1}(T)$ the subspace with vanishing normal trace \parencite[p.~19]{arnold2006finite}.

\begin{thm}\label{thm: Decomposition J Lambda}
    We have the orthogonal decompositions
    \begin{equation*}
        \begin{split}
            H\Lambda^k(\mathcal T) = & \mathcal J \Lambda^k(\mathcal T) \oplus H\Lambda^k(M)\\ = &
            \mathcal J\Lambda^k(\mathcal T) \oplus \mathcal K \Lambda^k(\mathcal T) \oplus  \big( \oplus_{T \in \mathcal T} \mathring{H}\Lambda^k(T) \big )
        \end{split}
    \end{equation*}
    and
    \begin{equation*}
    \begin{split}
          H^\star\Lambda^k(\mathcal T)= & \mathcal J^\star\Lambda^k(\mathcal T) \oplus H^\star\Lambda^k(M) 
              \\ = &
              \mathcal J^\star\Lambda^k(\mathcal T) \oplus \mathcal K ^\star\Lambda^k(\mathcal T) \oplus  \big( \oplus_{T \in \mathcal T} \mathring{H}^\star\Lambda^k(T) \big )  \\ 
    \end{split}
    \end{equation*}
\end{thm}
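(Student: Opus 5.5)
The first decomposition is essentially a statement about closed subspaces of a Hilbert space. By definition, $\mathcal J\Lambda^k(\mathcal T)$ is the $H$-orthogonal complement of $H\Lambda^k(M)$ inside $H\Lambda^k(\mathcal T)$. So it suffices to know that $H\Lambda^k(M)$ is a closed subspace of $H\Lambda^k(\mathcal T)$, which was noted after \Cref{def: Sobolev spaces}. It also follows from \Cref{lem: nojump}: that result exhibits $H\Lambda^k(M)$ as an intersection of kernels of the functionals $\ljump\cdot,\varphi\rjump$, and these are continuous by the continuity lemma for the jump form. The projection theorem then gives $H\Lambda^k(\mathcal T)=\mathcal J\Lambda^k(\mathcal T)\oplus H\Lambda^k(M)$. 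The same argument, using the second identity of \Cref{lem: nojump}, gives the starred version.

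For the finer decomposition I will split $H\Lambda^k(M)$ further. Let $N\subset H\Lambda^k(M)$ be the left kernel of $\ljump\cdot,\cdot\rjump\colon H\Lambda^k(M)\times H^\star\Lambda^{k+1}(\mathcal T)\to\mathbb R$. Since each $\tau\mapsto\ljump\tau,\varphi\rjump$ is continuous, $N$ is closed. By \Cref{def: Kdef}, $\mathcal K\Lambda^k(\mathcal T)$ is the $H$-orthogonal complement of $N$ in $H\Lambda^k(M)$, so $H\Lambda^k(M)=\mathcal K\Lambda^k(\mathcal T)\oplus N$ orthogonally. Both pieces lie in $H\Lambda^k(M)$, which is orthogonal to $\mathcal J\Lambda^k(\mathcal T)$. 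This yields the triple orthogonal sum, provided we show $N=\oplus_{T}\mathring H\Lambda^k(T)$, with each summand extended by zero to $M$.

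To identify $N$, observe that the jump form splits cellwise:
\[
\ljump\tau,\varphi\rjump=\sum_{T}\Big(\langle d\tau|_T,\varphi_T\rangle_{L^2(T)}-\langle\tau|_T,\delta\varphi_T\rangle_{L^2(T)}\Big),
\]
and $\varphi\in H^\star\Lambda^{k+1}(\mathcal T)$ has independent components on each cell. Therefore $\tau\in N$ if and only if, for every $T$, the cellwise integration-by-parts defect vanishes for all $\varphi_T\in H^\star\Lambda^{k+1}(T)$.

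The key ingredient, and the step I expect to be the main obstacle, is the following characterization on a Lipschitz cell:
\[
\mathring H\Lambda^k(T)=\{\sigma\in H\Lambda^k(T):\ \langle d\sigma,\varphi\rangle=\langle\sigma,\delta\varphi\rangle\ \ \forall\varphi\in H^\star\Lambda^{k+1}(T)\}.
\]
I would take this from the standard trace theory for $H\Lambda$ on Lipschitz domains. There the tangential trace is defined by duality through exactly this Green formula, so vanishing trace is equivalent to this identity; equivalently, $\mathring H\Lambda^k(T)$ is the closure of $C_0^\infty\Lambda^k(T)$ (see \parencite{arnold2006finite}). One can transport this to $T$ via the chart, since $T$ is assumed to be Lipschitz in a chart.

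Granting the characterization, the forward inclusion is direct. If $\tau\in N$, testing with $\varphi$ supported in a single cell shows $\tau|_T\in\mathring H\Lambda^k(T)$ for every $T$.

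For the reverse inclusion, take $\sigma=(\sigma_T)_T$ with $\sigma_T\in\mathring H\Lambda^k(T)$. Each cellwise defect vanishes, so $\ljump\sigma,\varphi\rjump=0$ for all $\varphi\in H^\star\Lambda^{k+1}(\mathcal T)$. In particular this holds for all $\varphi\in H^\star\Lambda^{k+1}(M)$, so \Cref{lem: nojump} gives $\sigma\in H\Lambda^k(M)$. Hence $\sigma\in N$.

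The starred decomposition follows by the same argument with the roles of $d$ and $\delta$ exchanged. Here one uses the right kernel, the second identity of \Cref{lem: nojump}, and the analogous characterization of $\mathring H^\star\Lambda^k(T)$ as the forms with vanishing normal trace.
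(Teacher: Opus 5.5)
The paper states this theorem without proof, so there is no argument of its own to compare against. Your proof is correct, and it is the natural one given \Cref{def: Kdef} and \Cref{lem: nojump}.

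It also matches the only related reasoning in the paper. The proof of the PDE characterization of $\mathcal K\Lambda^k(\mathcal T)$ uses exactly that $\mathcal K\Lambda^k(\mathcal T)$ is $H$-orthogonal to forms compactly supported in a single cell. That is one half of your identification of the left kernel $N$ with $\oplus_T \mathring H\Lambda^k(T)$; you also supply the reverse inclusion via \Cref{lem: nojump}.

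The step you flagged as the main obstacle needs no trace theory. Let $d_0$ denote $d$ with domain $C_0^\infty\Lambda^k(T)$. By the weak definition, $H^\star\Lambda^{k+1}(T)$ is exactly the domain of $d_0^\ast$, and $d_0^\ast=\delta$ there. Your set
\[
\{\sigma\in H\Lambda^k(T): \langle d\sigma,\varphi\rangle=\langle\sigma,\delta\varphi\rangle\ \ \forall\varphi\in H^\star\Lambda^{k+1}(T)\}
\]
is therefore the domain of $d_0^{\ast\ast}$. Testing with $C_0^\infty$ forms identifies the adjoint's value with $d\sigma$. Since $d_0$ is closable, $d_0^{\ast\ast}=\overline{d_0}$, so this set is the $H$-closure of $C_0^\infty\Lambda^k(T)$, on any open cell. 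Lipschitz regularity of the cells is needed only to identify this closure with the trace-zero space, as the paper defines $\mathring H\Lambda^k(T)$ via \parencite{arnold2006finite}.

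The starred case works the same way with $d$ and $\delta$ interchanged. There one uses the right kernel of the pairing on $H\Lambda^{k-1}(\mathcal T)\times H^\star\Lambda^k(M)$, and $\mathring H^\star\Lambda^k(T)$ is the domain of the adjoint of $d$ on $H\Lambda^{k-1}(T)$.
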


\begin{prop}\label{prop: Isometries}
    The following maps are all well-defined and isometries
    \begin{equation*}
        \begin{split}
           \mathcal K ^\star \Lambda^{k+1}(\mathcal T) \xrightarrow{\delta} & \mathcal J\Lambda^k(\mathcal T)\\
             \mathcal K \Lambda^{k}(\mathcal T) \xrightarrow{d} & \mathcal J^\star \Lambda^{k+1}(\mathcal T)\\
                  \mathcal J ^\star \Lambda^{k+1}(\mathcal T) \xrightarrow{\delta^\mathcal T} & \mathcal K\Lambda^k(\mathcal T)\\
              \mathcal J \Lambda^{k}(\mathcal T) \xrightarrow{d^\mathcal T}& \mathcal K^\star \Lambda^{k+1}(\mathcal T)
        \end{split}
    \end{equation*}
    Furthermore, we have the identities
    \begin{equation*}
        \begin{aligned}
            \ljump \delta \lambda, \mu\rjump &=  -\langle \lambda, \mu \rangle_{H^\star} \qquad &&\forall \lambda, \mu  \in  \mathcal{K} ^\star \Lambda^{k+1}(\mathcal T) \\
            \ljump \lambda, d\mu\rjump &=  \langle \lambda, \mu \rangle_{H} \qquad &&\forall \lambda, \mu  \in  \mathcal{K}\Lambda^{k}(\mathcal T) \\
            \ljump \delta^\mathcal{T}\lambda, \mu\rjump &=  -\langle \lambda, \mu \rangle_{H^\star} \qquad &&\forall \lambda, \mu  \in  \mathcal{J}^\star \Lambda^{k+1}(\mathcal{T}) \\
             \ljump \lambda, d^\mathcal{T}\mu\rjump &=  \langle \lambda, \mu \rangle_{H} \qquad &&\forall \lambda, \mu  \in  \mathcal{J}\Lambda^{k}(\mathcal T) \\
        \end{aligned}
    \end{equation*} 
\end{prop}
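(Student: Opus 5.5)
The plan is to derive everything from the PDE characterizations of $\mathcal K$, $\mathcal K^\star$, $\mathcal J$, $\mathcal J^\star$ in the preceding theorem, together with the definition of the jump form. I treat the first map $\delta\colon \mathcal K^\star\Lambda^{k+1}(\mathcal T)\to\mathcal J\Lambda^k(\mathcal T)$ in detail; the other three follow by the same argument with the roles of $d,\delta$ and broken/unbroken spaces exchanged, tracking the signs.

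\emph{Well-definedness.} Let $\lambda\in\mathcal K^\star\Lambda^{k+1}(\mathcal T)$. By the characterization, $\delta\lambda\in H\Lambda^k(\mathcal T)$ and $d^{\mathcal T}\delta\lambda=-\lambda$. Set $\tau=\delta\lambda$. Then $d^{\mathcal T}\tau=-\lambda\in H^\star\Lambda^{k+1}(M)$. Moreover $\delta d^{\mathcal T}\tau=-\delta\lambda=-\tau$. These are exactly the defining conditions for $\mathcal J\Lambda^k(\mathcal T)$, so $\tau\in\mathcal J\Lambda^k(\mathcal T)$.

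\emph{Isometry and the jump identity.} For $\lambda,\mu\in\mathcal K^\star\Lambda^{k+1}(\mathcal T)$, compute directly from the definition of the jump form:
\begin{equation*}
\ljump \delta\lambda,\mu\rjump=\langle d^{\mathcal T}\delta\lambda,\mu\rangle_{L^2}-\langle\delta\lambda,\delta\mu\rangle_{L^2}=-\langle\lambda,\mu\rangle_{L^2}-\langle\delta\lambda,\delta\mu\rangle_{L^2}=-\langle\lambda,\mu\rangle_{H^\star}.
\end{equation*}
For the isometry, note that $\|\delta\lambda\|_H^2=\|\delta\lambda\|_{L^2}^2+\|d^{\mathcal T}\delta\lambda\|_{L^2}^2=\|\delta\lambda\|^2_{L^2}+\|\lambda\|_{L^2}^2=\|\lambda\|_{H^\star}^2$. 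This gives norm preservation, and by polarization inner-product preservation.

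\emph{The remaining maps.} For $d\colon\mathcal K\to\mathcal J^\star$, with $\delta^{\mathcal T}d\mu=-\mu$, the same computation gives $\ljump\lambda,d\mu\rjump=\langle d\lambda,d\mu\rangle-\langle\lambda,\delta^{\mathcal T}d\mu\rangle=\langle\lambda,\mu\rangle_H$. The $\mathcal J$ and $\mathcal J^\star$ cases are identical, using $\delta d^{\mathcal T}\tau=-\tau$ and $d\delta^{\mathcal T}\tau=-\tau$.

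\emph{Main obstacle.} The chief subtlety is that the statement only asserts isometries, not surjectivity, so nothing more is needed. If surjectivity were wanted, I would exhibit inverses: $-d^{\mathcal T}$ maps $\mathcal J\Lambda^k$ into $\mathcal K^\star\Lambda^{k+1}$ by the same check, and $\delta(-d^{\mathcal T}\tau)=\tau$ on $\mathcal J$. Hence the maps are bijective isometries, with inverses given up to sign by the paired maps in the list. The only care needed is checking that membership conditions, such as $d^{\mathcal T}\tau\in H^\star\Lambda^{k+1}(M)$ being unbroken, are supplied by the characterizing equations, and this is immediate.
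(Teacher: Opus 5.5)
Your proposal is correct and follows essentially the same route as the paper. The paper uses the same steps for $\delta\colon\mathcal K^\star\Lambda^{k+1}(\mathcal T)\to\mathcal J\Lambda^k(\mathcal T)$: well-definedness from the PDE characterization, the norm computation, the jump identity via $d^{\mathcal T}\delta\lambda=-\lambda$, and surjectivity via $-d^{\mathcal T}$. The paper does count surjectivity as part of the proof, since \Cref{cor: isometry} needs isometric isomorphisms, so your ``optional'' surjectivity remark is in fact required, and you have supplied it correctly.
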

\begin{proof}
We will show the statement for $\delta \from \mathcal{K}^\star \Lambda^{k+1}(\mathcal T)\to \mathcal J\Lambda^k(\mathcal T)$.
Recall that for each $\lambda \in \mathcal K ^\star \Lambda ^{k+1}(\mathcal T)$, we have $ \delta \lambda \in H\Lambda^k(\mathcal T)$ and $\lambda  = - d^\mathcal T \delta \lambda $. Thus
$d^\mathcal{T} \delta \lambda =-\lambda \in H^\star \Lambda^{k+1}(M)$ and $\delta d^\mathcal{T} \delta \lambda=-\delta\lambda$, proving $\delta \lambda \in \mathcal{J}\Lambda^k(\mathcal{T})$.
Furthermore
\[
    \| \delta \lambda \| ^2_{H} = \| \delta \lambda \|^2_{L^2} + \| d^\mathcal T\delta \lambda \| ^2_{L^2}  = \| \lambda\|^2_{L^2} + \| \delta \lambda \| ^2_{L^2}  = \| \lambda \| _{H^\star}^2.
\]
For the identity, let $\lambda, \mu  \in  \mathcal K ^\star \Lambda^{k+1}(\mathcal T)$, then
\[
\ljump \delta \lambda, \mu\rjump= \langle d^\mathcal{T}\delta \lambda, \mu\rangle -  \langle \delta\lambda, \delta\mu\rangle= -\langle\lambda, \mu\rangle-\langle \delta\lambda, \delta\mu\rangle=-\langle\lambda,\mu\rangle_{H^\star}.
\]
Finally, for each $\omega \in \mathcal J\Lambda ^k(\mathcal T)$ we have $-d^\mathcal T\omega \in \mathcal K^\star \Lambda^{k+1} (\mathcal T) $ and $\delta (-d^\mathcal T \omega) = \omega$. This shows surjectivity.
\end{proof}

Later, we want to define elements in jump spaces corresponding to linear forms on $\mathcal{K} ^\star \Lambda^{k+1}(\mathcal T)$ and vice versa. By combining the Riesz representation theorem with 
the preceding proposition, we get the following corollary.
\begin{cor}
    \label{cor: isometry}
    The jump form $\ljump \cdot, \cdot \rjump$ induces isometries
    \begin{align*}
        \left(\mathcal J\Lambda^k(\mathcal T)\right)' &\leftrightarrow \mathcal K^\star \Lambda ^{k+1}(\mathcal T), & \left(\mathcal K^\star \Lambda ^{k+1}(\mathcal T)\right)' &\leftrightarrow \mathcal J\Lambda^k(\mathcal T),\\ 
        \left(\mathcal J^\star\Lambda^{k+1}(\mathcal T)\right)' &\leftrightarrow \mathcal K \Lambda ^{k}(\mathcal T), & \left(\mathcal K \Lambda ^{k}(\mathcal{T})\right)' &\leftrightarrow \mathcal J^\star\Lambda^{k+1}(\mathcal T).\\
    \end{align*}
  
\end{cor}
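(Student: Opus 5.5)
We prove the first pair of isometries; the second pair follows by the same argument with $\mathcal K\Lambda^k(\mathcal T)$, $\mathcal J^\star\Lambda^{k+1}(\mathcal T)$, $d$ and $\delta^{\mathcal T}$ in place of $\mathcal K^\star\Lambda^{k+1}(\mathcal T)$, $\mathcal J\Lambda^k(\mathcal T)$, $\delta$ and $d^{\mathcal T}$. Consider the map $\Phi\from \mathcal K^\star\Lambda^{k+1}(\mathcal T)\to (\mathcal J\Lambda^k(\mathcal T))'$ given by $\Phi(\mu)(\tau)=\ljump \tau,\mu\rjump$. By the continuity lemma for the jump form, $\Phi(\mu)$ is a bounded functional with $\|\Phi(\mu)\|\le\|\mu\|_{H^\star}$.

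For the reverse inequality and for surjectivity, we use \Cref{prop: Isometries}. The map $\delta\from\mathcal K^\star\Lambda^{k+1}(\mathcal T)\to\mathcal J\Lambda^k(\mathcal T)$ is a surjective isometry. For $\lambda,\mu\in\mathcal K^\star\Lambda^{k+1}(\mathcal T)$ it satisfies $\Phi(\mu)(\delta\lambda)=\ljump\delta\lambda,\mu\rjump=-\langle\lambda,\mu\rangle_{H^\star}$. Equivalently, for every $\tau\in\mathcal J\Lambda^k(\mathcal T)$,
\[
\Phi(\mu)(\tau)=-\langle \delta^{-1}\tau,\mu\rangle_{H^\star}=-\langle \tau,\delta\mu\rangle_{H},
\]
where the second equality holds because $\delta$ is an isometry between Hilbert spaces and therefore preserves inner products by polarization. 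Hence $\Phi(\mu)$ is the Riesz functional of $-\delta\mu\in\mathcal J\Lambda^k(\mathcal T)$. It follows that $\|\Phi(\mu)\|=\|\delta\mu\|_H=\|\mu\|_{H^\star}$, so $\Phi$ is an isometry. The map $\Phi$ is the composition of the surjective isometry $-\delta$ with the Riesz isomorphism $\mathcal J\Lambda^k(\mathcal T)\to(\mathcal J\Lambda^k(\mathcal T))'$, and is therefore onto. This gives $(\mathcal J\Lambda^k(\mathcal T))'\leftrightarrow\mathcal K^\star\Lambda^{k+1}(\mathcal T)$.

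For the dual of $\mathcal K^\star\Lambda^{k+1}(\mathcal T)$, take $\Psi(\tau)(\mu)=\ljump\tau,\mu\rjump$ for $\tau\in\mathcal J\Lambda^k(\mathcal T)$. The formula above gives $\Psi(\tau)(\mu)=-\langle\delta^{-1}\tau,\mu\rangle_{H^\star}$. Thus $\Psi$ is the composition of the isometric isomorphism $-\delta^{-1}=d^{\mathcal T}$, which is the fourth map in \Cref{prop: Isometries}, with the Riesz map of $\mathcal K^\star\Lambda^{k+1}(\mathcal T)$. Consequently $\Psi$ is an isometric isomorphism.

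The one point that needs care is the identity $\delta^{-1}=-d^{\mathcal T}$ on $\mathcal J\Lambda^k(\mathcal T)$, which requires $\delta$ to be injective on $\mathcal K^\star\Lambda^{k+1}(\mathcal T)$. Both facts follow from the characterizations in the preceding theorem. On $\mathcal K^\star\Lambda^{k+1}(\mathcal T)$ we have $d^{\mathcal T}\delta\lambda=-\lambda$, so $\delta\lambda=0$ forces $\lambda=0$. On $\mathcal J\Lambda^k(\mathcal T)$ we have $\delta d^{\mathcal T}\tau=-\tau$, so $\delta(-d^{\mathcal T}\tau)=\tau$. Apart from this, the argument is Riesz representation together with \Cref{prop: Isometries}.
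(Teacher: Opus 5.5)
Your proof is correct and takes the same route as the paper, which derives the corollary in one line by combining the Riesz representation theorem with the isometries and jump identities of \Cref{prop: Isometries}. You make explicit the details the paper leaves implicit, namely the identity $\ljump\tau,\mu\rjump=-\langle\tau,\delta\mu\rangle_H$ (obtained by polarization) and the inverse $\delta^{-1}=-d^{\mathcal T}$ on $\mathcal J\Lambda^k(\mathcal T)$.
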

In particular, for every $F \in (\mathcal J\Lambda^k(\mathcal T))'$ there exists a unique $\lambda \in \mathcal K^\star \Lambda ^{k+1}(\mathcal T)$ such that
\[
        F(\omega) = \ljump \omega , \lambda \rjump \quad \text{for all $\omega \in \mathcal J\Lambda^k(\mathcal T)$,}
\]
and $ \| \lambda \|_{H^\star} = \| F\|_{(\mathcal J\Lambda^k(\mathcal T))'}$.
\begin{definition}
    Let $F \in \big(\mathcal K \Lambda ^{k}(\mathcal{T})\big)'$ and $G \in \big(\mathcal K^\star \Lambda ^{k+1}(\mathcal T)\big)'$. We say any $\phi\in H^\star\Lambda^{k+1}(\mathcal T)$ and $\gamma \in H \Lambda^k(\mathcal T)$ are representatives for $F$ and $G$ respectively, if $\ljump\phi ^\normal \rjump = F$ and $\ljump \gamma^\tangent \rjump = G $.
\end{definition}
By \Cref{cor: isometry} we can always find a representative. 
\begin{lemma}\label{Lem: Harmonic part jump}
    Let $\phi$ and $\tilde{\phi}$ be two representatives of some $F \in \big(\mathcal K \Lambda ^{k}(\mathcal{T})\big)'$. Then $\delta^{\mathcal T}\phi$ and $\delta^{\mathcal T}\tilde \phi$ have the same exact and harmonic part. 
\end{lemma}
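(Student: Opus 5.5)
Let $\psi = \phi - \tilde\phi \in H^\star\Lambda^{k+1}(\mathcal T)$. Since $\ljump \phi^\normal\rjump = \ljump\tilde\phi^\normal\rjump = F$ as functionals on $\mathcal K\Lambda^k(\mathcal T)$, linearity gives $\ljump q, \psi\rjump = 0$ for all $q \in \mathcal K\Lambda^k(\mathcal T)$. By \Cref{lem: nojump K} (second identity, with $k$ shifted by one), this means $\psi \in H^\star\Lambda^{k+1}(M)$. Consequently $\delta^{\mathcal T}\psi = \delta\psi$ is a genuine (unbroken) codifferential of a form in $H^\star\Lambda^{k+1}(M)$.

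The claim therefore reduces to showing that $\delta\psi$ has zero exact part and zero harmonic part in the Hodge decomposition $L^2\Lambda^k(M) = dH\Lambda^{k-1}(M)\oplus \delta H^\star\Lambda^{k+1}(M)\oplus\harmonic\Lambda^k(M)$. This holds because $\delta\psi$ lies in the middle summand $\delta H^\star\Lambda^{k+1}(M)$, which is $L^2$-orthogonal to the other two. For a direct check, take $\eta \in H\Lambda^{k-1}(M)$ and $h\in\harmonic\Lambda^k(M)$. Then $\langle \delta\psi, d\eta\rangle_{L^2} = \langle\psi, dd\eta\rangle_{L^2} = 0$, and $\langle\delta\psi, h\rangle_{L^2} = \langle \psi, dh\rangle_{L^2} = 0$, using the weak adjointness of $d$ and $\delta$ on the closed manifold $M$. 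Alternatively, these follow from the jump form vanishing on unbroken pairs: $\ljump d\eta, \psi\rjump = 0$ and $\ljump h,\psi\rjump=0$, which expand to the same identities since $d(d\eta)=0$ and $dh=0$.

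Finally, write $\delta^{\mathcal T}\phi = \delta^{\mathcal T}\tilde\phi + \delta\psi$ and project onto the exact and harmonic components. The projections of $\delta\psi$ vanish, so these components of $\delta^{\mathcal T}\phi$ and $\delta^{\mathcal T}\tilde\phi$ agree. Only the coexact parts can differ.

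The main point requiring care is the first step, deducing $\psi\in H^\star\Lambda^{k+1}(M)$ from vanishing jumps against $\mathcal K\Lambda^k(\mathcal T)$ only, rather than against all of $H\Lambda^k(M)$. \Cref{lem: nojump K} is exactly what covers this. If one wants to avoid it, one can argue instead that for $q\in H\Lambda^k(M)$ the jump $\ljump q,\psi\rjump$ depends only on the $\mathcal K$-component of $q$. This follows from the orthogonal decomposition of $H\Lambda^k(M)$ in \Cref{thm: Decomposition J Lambda}, since the $\mathring H$ components have vanishing trace and thus contribute no jump.
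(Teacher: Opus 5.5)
Your proof is correct and matches the paper's argument. You use \Cref{lem: nojump K} to get $\phi-\tilde\phi\in H^\star\Lambda^{k+1}(M)$, so $\delta^{\mathcal T}\phi-\delta^{\mathcal T}\tilde\phi=\delta(\phi-\tilde\phi)$ lies in the coexact summand and is $L^2$-orthogonal to the exact and harmonic parts; your explicit orthogonality check and the remark that the jump against $H\Lambda^k(M)$ only sees the $\mathcal K\Lambda^k(\mathcal T)$-component are valid but not needed.
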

\begin{proof}
        We have $\ljump q, \phi - \tilde \phi\rjump =0 $ for all $q \in \mathcal K \Lambda ^{k}(\mathcal{T})$, so $\phi-\tilde{\phi}\in H^\star\Lambda^{k+1}(M)$ by \Cref{lem: nojump K}. Then  $\delta^\mathcal T \phi - \delta^\mathcal T \tilde \phi=\delta (\phi-\tilde{\phi})\in \im \delta \subset L^2\Lambda^k(M)$, and $\delta^\mathcal{T} \phi - \delta^\mathcal T \tilde\phi$ is therefore orthogonal to $\im d\oplus \mathscr{H}\Lambda^ k(M)$.
\end{proof}

\subsection{Lie algebra-valued Sobolev forms.}
We now consider differential forms with values in the Lie algebra $\lie{g}$. All the spaces defined in \Cref{def: Sobolev spaces} have corresponding spaces of Lie algebra-valued forms.

The jumps are defined as follows. Let $\tau \in H\Lambda^k(\mathcal T, \lie g) $ and $\mu \in H^\star\Lambda^{k+1}(M, \lie{g})$, and write 
$\tau=\sum_j \tau^j e_j$, $\mu=\sum_k \mu^k e_k$, where $\{e_i\}_{i=1}^a$ is a basis of $\lie{g}$. Then
\begin{equation}\label{eq: jump lie algebra valued jump}\begin{aligned}
        \ljump \tau , \mu \rjump^{\mathfrak g} =& \int_M \langle d^\mathcal T\tau, \mu \rangle_{x} - \langle \tau, \delta \mu \rangle _x \vol \\ = &
         \sum_{j,k}\langle e_j, e_k \rangle_\lie g \int_M \langle d^\mathcal T\tau^j, \mu^k \rangle_{h} - \langle \tau^j, \delta \mu^k \rangle _h \vol \\ = &
         \sum_{j,k}\langle e_j, e_k \rangle_\lie g \ljump \tau^j , \mu^k\rjump. 
\end{aligned}\end{equation}
Note that the jumps in each component are unrelated. This is in contrast with the twisted jumps in \eqref{eq: jumpdef}. When the structure group $G$ is abelian, the jump constraints are non-twisted, as in \eqref{eq: jump lie algebra valued jump}. We may therefore work componentwise and restrict attention to the case in which the Lie algebra is one-dimensional.

\section{Mixed formulation of the variational problem}\label{sec: Mixed formulation}
We now derive a mixed formulation of the Yang--Mills variational problem. Throughout the remainder of the paper, we work in the following setting:
\begin{itemize}
    \item $M$ is still a connected compact Riemannian manifold without boundary.
    \item The space of harmonic 1-forms on $M$ is trivial, i.e. $\harmonic \Lambda^1(M)= 0$.
    \item The compact Lie group $G$ is abelian and one-dimensional.
    \item There is a fixed triangulation $\mathcal T$ and, for each $T_i\in\mathcal T$, a fixed section $\sigma_i\colon U_i\to P$ on a neighborhood $U_i\supset T_i$.
\end{itemize}
Connectedness is a technical simplification: if $M$ had multiple connected components, we could treat each separately. The assumption $\harmonic \Lambda^1(M)=0$ likewise avoids introducing an additional harmonic variable; filtering out harmonic forms is standard \parencite{arnold2018finite} but is not pursued here.

With appropriate boundary conditions, it would also be possible to handle manifolds with boundary.

The generalization to higher-dimensional abelian Lie groups is straightforward. Non-abelian Lie groups require new methods, which we discuss at the end of the paper.
\todo[inline,disable]{Somewhere in this section, we should write something about what the forms are like in the abelian case, and that the jumps are simple tuples since Ad(g) is the identity matrix, and we can therefore show it only for the case where the lie algebra is U(1)}
We assume that the prescribed jumps $G_\psi$ and $F_\psi$ from \eqref{eq: prescribed jumps} extend to bounded maps $G_\psi \colon \mathcal{K}^\star\Lambda^2(\mathcal{T}) \to \mathbb R $ and $F_\psi \colon \mathcal{K}\Lambda^0(\mathcal{T}) \to \mathbb R$. Concretely, for $\varphi \in C^\infty\Lambda^2(M)$ and $q\in C^\infty\Lambda^0(M) $, we assume that
\[
\begin{aligned}
    |G_\psi(\varphi)| = & 
    \Big | \sum_{\varepsilon_{ji} = 1} \int _f  i_f^\ast(g_{ij}^{-1}dg_{ij}) \wedge i_f^\ast(\star \varphi)  \Big | \leq C\|\varphi\|_{H^\star}\\
     |F_\psi(q)| =& 
     \Big | \sum_{\varepsilon_{ji} = 1} \int _f   i_f^\ast q\wedge i_f^\ast(\star g_{ij}^{-1}dg_{ij})   \Big | \leq C\|q\|_{H}
\end{aligned}
\]
\todo[inline, disable]{Some of this can be moved, maybe even to section 2}

\subsection{Formulation of the variational problem.}
Since we are now working with an abelian Lie group, the Yang--Mills variational problem \eqref{eq: YMgeneral} simplifies to
\begin{equation}\label{eq: YMabelian}
    \begin{aligned}
    &\text{minimize} \frac 1 2  \|d^\mathcal{T}\omega\|^2_{L^2}= \frac 1 2 \sum _{T_i \in \mathcal T}\| d\omega _i \|_{L^2} ^ 2 \\
    &\text{subject to: } \ljump \omega^\tangent \rjump= G_\psi,  \quad \ljump \omega^\normal \rjump= F_\psi
\end{aligned}\end{equation}
By the Hodge decomposition and the assumption $\harmonic\Lambda^1(M)=0$, we can write $\omega = df + \delta g $. Notice that
\[
    \mathcal S(\omega) = \frac 1 2 \| d^\mathcal T\omega\| ^2_{L^2} = \frac 1 2 \| d^\mathcal T \delta g\| ^ 2_{L^2}
\]
and 
\[
    \ljump \omega^\tangent \rjump = \ljump \delta g^\tangent \rjump
\]
only depend on the $\delta g$-component of $\omega$. 

Thus, $\omega$ is not uniquely defined by \eqref{eq: YMabelian}. The remaining gauge freedom in $df$ is fixed by minimizing $\|\delta^\mathcal T df \| ^2_{L^2}$.
We arrive at the following constrained problem:
\begin{equation}\label{eq: YMabelian min delta omega}
    \begin{aligned}
    &\text{minimize} \frac 1 2\|d^\mathcal{T}\omega\|^2_{L^2} + \frac 1 2 \| \delta^\mathcal T\omega\|_{L^2}^2\\
    &\text{subject to: } \ljump \omega^\tangent \rjump= G_\psi,  \quad \ljump \omega^\normal \rjump=  F_\psi.
\end{aligned}\end{equation}
The corresponding Lagrangian is
\begin{equation}\label{eq: Lagrangian min delta omega}
\mathcal L(\omega,p,\lambda) = \frac 1 2  \| d^\mathcal T\omega \| ^2 _{L^2}+ \frac 1 2 \| \delta ^\mathcal T\omega \|_{L^2}^2  + \ljump \omega - \gamma^\tangent, \lambda \rjump + \ljump p , \omega - \phi^\normal \rjump
\end{equation}
where we have picked some representatives $ \phi$ for $ F_\psi$ and $\gamma$ for $G_\psi$.

Currently, $p$ is a Lagrange multiplier enforcing the normal jump constraint. We have some freedom in defining an ambient space for $p$. By \Cref{def: Kdef}, letting $p\in \mathcal K\Lambda^0(\mathcal T)$ would result in a unique $p$. For a simpler numerical implementation, however, we choose $p\in H\Lambda^0(M)$. The normal-jump constraint alone does not determine the components of $p$ outside $\mathcal K\Lambda^0(\mathcal T)$.

The next subsection adds an equation that determines $p\in H\Lambda^0(M)$.

\subsection{Mixed formulation of normal jumps.}

The following two lemmas concern the problem of enforcing normal jump constraints, and simultaneously minimizing $\delta^\mathcal T\omega$, in a mixed formulation. 

Recall that any bounded linear map $F \colon \mathcal K\Lambda ^0(\mathcal T) \to \mathbb R$ can be represented as 
\[\ljump \phi^\normal\rjump\from f\mapsto \ljump f, \phi\rjump\] for a (non-unique) $\phi \in H^\star\Lambda^1(\mathcal{T})$.
\begin{lemma}\label{lem: Jump minimize without p}
    Let $F \colon \mathcal K\Lambda ^0(\mathcal T) \to \mathbb R$ be a bounded linear map, where $F = \ljump \phi^\normal\rjump$ for some $\phi \in H^\star\Lambda^{1}(\mathcal{T})$. The  codifferential of the representative $\phi$ has a Hodge decomposition $\delta^\mathcal T \phi = \delta \beta + h$. If $\omega \in H\Lambda^1(\mathcal T)$ satisfies
    \begin{equation}\label{eq: Weak normal jump without p}
        \langle \omega, dq \rangle _{L^2} - \langle h,q \rangle_{L^2} = \ljump q,\phi^\normal \rjump 
    \end{equation}
    for all $q \in H\Lambda ^0(M)$, then $\ljump v,\omega\rjump = F(v)$ for all $v \in \mathcal K\Lambda^0(\mathcal T)$. Also, among the forms that satisfy $ \ljump \omega^\normal \rjump = F$, $\omega$ is the one which minimizes $\|\delta^\mathcal T\omega\|^2_{L^2}$.
We also have $\delta^\mathcal T \omega = h$.
\end{lemma}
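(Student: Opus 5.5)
The plan is to rewrite \eqref{eq: Weak normal jump without p} in terms of the jump form and then test it against two kinds of $q$. I will assume, as the statement implicitly requires for $\ljump q,\omega\rjump$ and $\delta^\mathcal T\omega$ to make sense, that $\omega$ also lies in $H^\star\Lambda^1(\mathcal T)$. Then the definition of the jump form gives, for every $q\in H\Lambda^0(M)$,
\[
\langle \omega, dq\rangle_{L^2} = \ljump q,\omega\rjump + \langle q,\delta^\mathcal T\omega\rangle_{L^2}.
\]
Substituting this into \eqref{eq: Weak normal jump without p} turns the hypothesis into
\[
\ljump q,\omega-\phi\rjump + \langle q,\delta^\mathcal T\omega - h\rangle_{L^2} = 0 \qquad \forall q\in H\Lambda^0(M). \tag{$\ast$}
\]

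\emph{Interior test functions.} I first choose $q\in C^\infty_0\Lambda^0(\mathring T)$ for a single cell $T$, extended by zero. Such $q$ lies in $H\Lambda^0(M)$. Since $q$ vanishes near $\partial T$ and off $T$, integration by parts on each cell gives $\ljump q,\psi\rjump=0$ for every $\psi\in H^\star\Lambda^1(\mathcal T)$. This is just the weak definition of $\delta$ on each cell. So $(\ast)$ reduces to $\langle q,\delta^\mathcal T\omega-h\rangle_{L^2(T)}=0$. By density of $C^\infty_0(\mathring T)$ in $L^2(T)$, applied cell by cell, this yields $\delta^\mathcal T\omega = h$.

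\emph{Jump conditions.} With $\delta^\mathcal T\omega=h$, $(\ast)$ becomes $\ljump q,\omega-\phi\rjump=0$ for all $q\in H\Lambda^0(M)$. In particular this holds for all $v\in\mathcal K\Lambda^0(\mathcal T)\subset H\Lambda^0(M)$, so $\ljump v,\omega\rjump=\ljump v,\phi\rjump=F(v)$. A consistency check is the constant function $q=1$: then $dq=0$, and $\langle 1,\delta\beta\rangle=0$ by orthogonality in the Hodge decomposition. Both sides of \eqref{eq: Weak normal jump without p} then equal $-\langle h,1\rangle$, so the equation imposes no contradiction.

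\emph{Minimality.} Let $\omega'\in H^\star\Lambda^1(\mathcal T)$ satisfy $\ljump\omega'^\normal\rjump=F$. Then $\ljump v,\omega'-\omega\rjump=0$ for all $v\in\mathcal K\Lambda^0(\mathcal T)$, so $\omega'-\omega\in H^\star\Lambda^1(M)$ by \Cref{lem: nojump K}. Hence
\[
\delta^\mathcal T\omega' = h + \delta(\omega'-\omega),
\]
and $\delta(\omega'-\omega)\in\im\delta$ is $L^2$-orthogonal to the harmonic form $h$. By Pythagoras, $\|\delta^\mathcal T\omega'\|^2=\|h\|^2+\|\delta(\omega'-\omega)\|^2\ge\|h\|^2=\|\delta^\mathcal T\omega\|^2$. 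This is the same mechanism as in \Cref{Lem: Harmonic part jump}, which also shows that $h$ depends only on $F$ and not on the chosen representative $\phi$.

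\emph{Main obstacle.} The only delicate step is the first one. It needs the facts that functions compactly supported in a cell interior lie in $H\Lambda^0(M)$, which is clear after extension by zero, and that they pair trivially with every broken $H^\star$ form in the jump form. I will verify the latter directly from the definition of the weak codifferential on each $T$. I also need to state explicitly the regularity assumption $\omega\in H^\star\Lambda^1(\mathcal T)$.
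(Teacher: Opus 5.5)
Your argument is correct once $\omega\in H^\star\Lambda^1(\mathcal T)$ is known, but you never prove this; you add it as a hypothesis on the grounds that the statement implicitly requires it. It is not an extra assumption. It is a consequence of \eqref{eq: Weak normal jump without p}, and deriving it is the actual content of the paper's proof.

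This matters downstream. In \Cref{thm: Variational problem T} the unknown $\omega$ is only known to lie in $H\Lambda^1(\mathcal T)$. There, through \Cref{lem: Jump minimize with p}, this lemma is what makes $\delta^{\mathcal T}\omega$ meaningful and gives $\delta^{\mathcal T}\omega=p$. As written, your very first step is not available: it rewrites $\langle\omega,dq\rangle_{L^2}$ via $\ljump q,\omega\rjump$, and the jump form requires its second argument to lie in $H^\star\Lambda^1(\mathcal T)$.

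The fix is a reordering of your own steps: apply the interior test functions to \eqref{eq: Weak normal jump without p} itself rather than to $(\ast)$. For $q\in C^\infty_0(\mathring T)$ extended by zero, your observation that $\ljump q,\phi\rjump=0$ uses only $\phi\in H^\star\Lambda^1(\mathcal T)$. The hypothesis therefore reads
\[
\langle \omega, dq\rangle_{L^2(T)} = \langle h, q\rangle_{L^2(T)} \qquad \forall q\in C^\infty_0(\mathring T),
\]
which is precisely the weak definition of $\omega|_T\in H^\star\Lambda^1(T)$ with $\delta(\omega|_T)=h$. This gives $\omega\in H^\star\Lambda^1(\mathcal T)$ and $\delta^{\mathcal T}\omega=h$ at once. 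After that, $(\ast)$ is legitimate and your jump and minimality steps go through unchanged; the minimality step is the same Hodge-orthogonality argument as the paper's.

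The paper argues globally instead. It expands only $\ljump q,\phi\rjump=\langle dq,\phi\rangle_{L^2}-\langle q,\delta\beta+h\rangle_{L^2}$ and obtains $\langle\omega-\phi,dq\rangle_{L^2}=-\langle\delta\beta,q\rangle_{L^2}$ for all $q\in H\Lambda^0(M)$. That is, $\omega-\phi\in H^\star\Lambda^1(M)$ with $\delta(\omega-\phi)=-\delta\beta$. The jump statement then follows from \Cref{lem: nojump}, and $\delta^{\mathcal T}\omega=\delta^{\mathcal T}\phi-\delta\beta=h$ by subtraction. Either order works, but some version of this derivation has to be part of the proof.
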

\begin{proof}
Expanding \eqref{eq: Weak normal jump without p} gives
\begin{equation*}
         \langle \omega, dq \rangle _{L^2} - \langle h,q \rangle_{L^2} = \ljump q,\phi^\normal \rjump  = 
         \langle \phi,dq \rangle _{L^2} - \langle q , \delta \beta \rangle _{L^2} - \langle h,q \rangle _{L^2}
\end{equation*}
and simplifying gives
\begin{equation*}\begin{aligned}
         \langle \omega - \phi, dq \rangle _{L^2}  =\langle q ,- \delta \beta \rangle _{L^2}
\end{aligned}\end{equation*}
for all $q \in H\Lambda ^0(M)$, which is equivalent to $\delta(\omega - \phi ) = - \delta \beta$. This means that $\omega - \phi \in H^\star\Lambda^1(M)$, which by \Cref{lem: nojump} gives $\ljump (\omega  - \phi)^\normal\rjump = 0$ and proves the first part. We could instead choose any $\alpha\in\im\delta$ on the right-hand side of $\delta(\omega-\phi)=\alpha$. Hodge orthogonality then gives
\begin{equation*}
    \| \delta ^\mathcal T\omega \| ^2_{L^2} = \|  \alpha + \delta^\mathcal T\phi \| ^2_{L^2} = \| \alpha + \delta \beta + h \| ^2 _{L^2} = \| \alpha + \delta \beta \| ^2_{L^2} + \| h \| ^2 _{L^2}
\end{equation*}
which is minimized at $\alpha = -\delta \beta$, confirming the second part. Lastly moving the terms around in $\delta(\omega - \phi) = -\delta \beta $ and using $\delta^{\mathcal T}\phi = \delta\beta +h$ we get $\delta^\mathcal T\omega = h$. 
\end{proof}
By \Cref{Lem: Harmonic part jump}, $h$ depends only on $F_\psi$, not on the representative $\phi$. Instead of taking the harmonic part of $\delta^\mathcal T \phi$ directly, we use the minimizer $p$ of $\| p - \delta^\mathcal T  \phi\|^2_{L^2}$ subject to $\Delta p = \delta dp = 0$. This gives a new version of \Cref{lem: Jump minimize without p} in which $p = h$.
\begin{lemma}[Weak mixed normal jump]\label{lem: Jump minimize with p}
     Let $F \colon \mathcal K\Lambda ^0(\mathcal T) \to \mathbb R$ be a bounded linear map, and $F = \ljump \phi^\normal \rjump$ for some $\phi \in H^\star\Lambda^{1}(\mathcal{T})$. The  codifferential of the representative $\phi$ has Hodge decomposition $\delta^\mathcal T \phi= \delta \beta + h$. If $\omega \in H\Lambda^1(\mathcal T)$ and $p \in H\Lambda^0(M)$ satisfy
    \begin{align}\label{eq: Weak normal jump with p}
        \langle \omega, dq \rangle _{L^2} - \langle p,q \rangle_{L^2} = \ljump q,\phi^\normal \rjump &,&  \langle dp , d\varphi \rangle_{L^2} = 0
    \end{align}
    for all $q \in H\Lambda ^0(M)$ and $\varphi \in H\Lambda^0(M)$, then $\ljump q,\omega\rjump = F(q)$ for all $q \in \mathcal K\Lambda^0(\mathcal T)$. Also, among the forms that satisfy $ \ljump \omega^\normal \rjump = F$, $\omega$ is the one which minimizes $\|\delta^\mathcal T\omega\|^2_{L^2}$.
We also have $\delta^\mathcal T \omega = h = p$.
\end{lemma}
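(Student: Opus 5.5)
The plan is to reduce the statement to \Cref{lem: Jump minimize without p}. The only difference between the two lemmas is that the harmonic part $h$ of $\delta^\mathcal T\phi$ is replaced by the unknown $p$. So it suffices to show $p = h$, and then all conclusions transfer verbatim.

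\textbf{Step 1: $p$ is constant.} Test the second equation of \eqref{eq: Weak normal jump with p} with $\varphi = p$. This gives $\|dp\|_{L^2}^2 = 0$, so $dp = 0$ and $p \in H\Lambda^0(M)$ is closed. A closed $0$-form has $\delta p = 0$ trivially, so $p \in \harmonic\Lambda^0(M)$. Since $M$ is connected, this space consists of the constant functions.

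\textbf{Step 2: identify $p$ with $h$.} Test the first equation with $q = c$, a constant; $c$ lies in $H\Lambda^0(M)$ and has $dc=0$. This gives
\[
-\langle p, c\rangle_{L^2} = \ljump c, \phi^\normal\rjump = \langle dc,\phi\rangle_{L^2} - \langle c,\delta^\mathcal T\phi\rangle_{L^2} = -\langle c, \delta\beta + h\rangle_{L^2} = -\langle c,h\rangle_{L^2},
\]
where the last equality uses Hodge orthogonality: $c$ is harmonic and $\delta\beta\in\im\delta$, so $\langle c,\delta\beta\rangle_{L^2}=0$. Both $p$ and $h$ lie in $\harmonic\Lambda^0(M)$, and they have the same $L^2$ pairing with every element of that space. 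Hence $p = h$.

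\textbf{Step 3: conclude.} Substituting $p = h$ into the first equation of \eqref{eq: Weak normal jump with p} gives exactly \eqref{eq: Weak normal jump without p} for all $q\in H\Lambda^0(M)$. \Cref{lem: Jump minimize without p} then yields $\ljump q,\omega\rjump = F(q)$ for $q\in\mathcal K\Lambda^0(\mathcal T)$, the minimality of $\|\delta^\mathcal T\omega\|_{L^2}^2$, and $\delta^\mathcal T\omega = h = p$.

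The only delicate point is Step 2. It relies on the Hodge decomposition of $\delta^\mathcal T\phi \in L^2\Lambda^0(M)$ having no exact part in degree $0$, so that $\delta^\mathcal T\phi = \delta\beta + h$ is the full decomposition. It also relies on harmonic $0$-forms being exactly the constants, which requires connectedness of $M$. Both are available from the standing assumptions and the Hodge decomposition theorem stated earlier, so no real obstacle is expected.
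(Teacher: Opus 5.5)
Your proposal is correct and is essentially the paper's proof: both reduce to \Cref{lem: Jump minimize without p} by showing $p = h$. First, $p$ is harmonic by the second equation. Then the first equation is tested with a harmonic $q$, and Hodge orthogonality gives $\langle q, p\rangle_{L^2} = \langle q, h\rangle_{L^2}$. You are only slightly more explicit than the paper, testing with $\varphi = p$ to get $dp = 0$ and using connectedness to identify harmonic $0$-forms with constants.
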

\begin{proof}
    We need to show that $p=h$, so that the result follows from \Cref{lem: Jump minimize without p}. The second equation is equivalent to $\Delta p=0$, so $p$ is harmonic. If $q = \harmonic q$ is harmonic, the orthogonality of the Hodge decomposition gives
    \[
        -\langle \harmonic q , p \rangle _{L^2} = \langle d\harmonic q, \phi \rangle _{L^2} - \langle\harmonic q, \delta^\mathcal T\phi \rangle _{L^2}  = -\langle\harmonic q, h \rangle _{L^2}
    \]
    which implies that $p = h$. 
\end{proof}
\subsection{Infinite-dimensional saddle-point problem.}
We now use \Cref{lem: Jump minimize with p} to derive variational equations for the stationary points of the Lagrangian $\mathcal L$ in \eqref{eq: Lagrangian min delta omega}.
\setlength{\forallsep}{0cm}
\begin{thm}\label{thm: Variational problem T}
    Let $\phi$ and $\gamma$ be any representatives for $F_\psi$ and $G_\psi$. Solutions $(\omega,p,\lambda)\in H\Lambda^1(\mathcal{T})\oplus H\Lambda^{0}(M)\oplus \mathcal{K}^\star \Lambda^2(\mathcal{T})$
    to the variational problem
    \begin{subequations}\label{eq: variational problem}
    \begin{align}
    \label{eq: variational problem1}
    \langle \omega , dq \rangle_{L^2}  - \langle p,q \rangle_{L^2}&= \ljump q, \phi^\normal\rjump 
    & \forallspace &\forall q\in H\Lambda^{0}(M)\\ \label{eq: variational problem2}
    \langle d^\mathcal T \omega , d^\mathcal T \tau \rangle _{L^2} + \langle dp ,  \tau  \rangle_{L^2} + \ljump \tau, \lambda \rjump&=0
    & \forallspace&\forall \tau \in H\Lambda^1(\mathcal{T})\\ \label{eq: variational problem3}
    \ljump \omega , \mu \rjump &= \ljump \gamma^\tangent, \mu\rjump
    & \forallspace&\forall \mu \in \mathcal{K}^\star \Lambda^2(\mathcal{T})
    \end{align}
    \end{subequations}
    are solutions to the Yang--Mills variational problem \eqref{eq: YMabelian min delta omega}.
\end{thm}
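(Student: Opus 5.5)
The plan is to verify three things in turn: that the two jump constraints of \eqref{eq: YMabelian min delta omega} hold, that $\delta^\mathcal T\omega$ is identified correctly, and that $\omega$ minimizes the convex quadratic functional on the affine feasible set. Minimality will come from showing that the first variation vanishes in every admissible direction.

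\emph{Step 1: normal jump and $\delta^\mathcal T\omega$.} I want to apply \Cref{lem: Jump minimize with p}. Equation \eqref{eq: variational problem1} is exactly its first hypothesis, so it remains to obtain $\langle dp, d\varphi\rangle_{L^2}=0$ for all $\varphi\in H\Lambda^0(M)$.
\begin{itemize}
\item Test \eqref{eq: variational problem2} with $\tau = d\varphi$ for $\varphi\in H\Lambda^0(M)$.
\item Then $d^\mathcal T d\varphi = dd\varphi = 0$.
\item Since $d\varphi\in H\Lambda^1(M)$ and $\lambda\in \mathcal K^\star\Lambda^2(\mathcal T)\subset H^\star\Lambda^2(M)$, we get $\ljump d\varphi,\lambda\rjump=0$ by \Cref{lem: nojump}.
\item Hence $\langle dp, d\varphi\rangle_{L^2}=0$.
\end{itemize}
\Cref{lem: Jump minimize with p} then gives $\ljump\omega^\normal\rjump = F_\psi$ on $\mathcal K\Lambda^0(\mathcal T)$, and also $\delta^\mathcal T\omega = p = h$. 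Since $\langle dp,dp\rangle=0$ and $M$ is connected, $p$ is constant and $dp=0$, which simplifies the next steps.

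\emph{Step 2: tangential jump.} Equation \eqref{eq: variational problem3} says $\ljump\omega,\mu\rjump = \ljump\gamma,\mu\rjump$ for all $\mu\in\mathcal K^\star\Lambda^2(\mathcal T)$. So $\ljump\omega^\tangent\rjump = G_\psi$ as functionals on $\mathcal K^\star\Lambda^2(\mathcal T)$, which is the space on which the constraint is posed. By \Cref{lem: nojump K} this also means $\omega-\gamma$ has no tangential jump.

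\emph{Step 3: minimality.} Let $\omega'$ be any other feasible form and set $e=\omega'-\omega$. Then $e$ has zero tangential and zero normal jumps, so \Cref{lem: nojump K} gives $e\in H\Lambda^1(M)\cap H^\star\Lambda^1(M)$. Expanding the quadratic,
\[
\mathcal S'(\omega')-\mathcal S'(\omega)=\langle d^\mathcal T\omega,de\rangle+\langle\delta^\mathcal T\omega,\delta e\rangle+\tfrac12\big(\|de\|^2+\|\delta e\|^2\big),
\]
where $\mathcal S'$ denotes the objective in \eqref{eq: YMabelian min delta omega}. It suffices to show the linear part vanishes.
\begin{itemize}
\item Testing \eqref{eq: variational problem2} with $\tau=e$ gives $\langle d^\mathcal T\omega,de\rangle=-\langle dp,e\rangle-\ljump e,\lambda\rjump$.
\item The jump term vanishes, because $e\in H\Lambda^1(M)$ and $\lambda\in H^\star\Lambda^2(M)$.
\item Using $\delta^\mathcal T\omega=p$, the linear part becomes $-\langle dp,e\rangle+\langle p,\delta e\rangle$.
\item This is zero, either by the weak definition of $\delta e$ on the closed manifold $M$, or simply because $p$ is constant: then $dp=0$ and $\langle p,\delta e\rangle = p\langle 1,\delta e\rangle = p\langle d1,e\rangle=0$.
\end{itemize}
Hence $\mathcal S'(\omega')\ge\mathcal S'(\omega)$.

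\emph{Main obstacle.} I expect the delicate point to be the jump-form bookkeeping rather than Step 3 itself. Specifically, one must check that $\ljump\cdot,\cdot\rjump$ vanishes whenever both arguments lie in the unbroken spaces, and that "$\omega'$ feasible" legitimately yields $e$ unbroken via \Cref{lem: nojump K}. Both facts are needed in Steps 1 and 3, and both follow from the definitions and earlier propositions.
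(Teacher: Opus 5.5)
Your proof is correct. Step 1 matches the paper's argument: you test \eqref{eq: variational problem2} with $d\varphi$ to get $\langle dp,d\varphi\rangle_{L^2}=0$, then invoke \Cref{lem: Jump minimize with p} to obtain $\delta^\mathcal T\omega=p$ and the normal-jump constraint.

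The two routes diverge only at the end.
\begin{itemize}
\item The paper concludes by showing that $(\omega,p,\lambda)$ is a stationary point of the Lagrangian \eqref{eq: Lagrangian min delta omega}. It computes the first variation in arbitrary directions $(\tau,q,\mu)$ and cancels $\langle\delta^\mathcal T\omega,\delta^\mathcal T\tau\rangle$ against the $-\langle p,\delta^\mathcal T\tau\rangle$ part of $\ljump p,\tau\rjump$.
\item You first verify both constraints explicitly. You then compare $\omega$ with an arbitrary feasible $\omega'$, using \Cref{lem: nojump K} to see that the admissible directions are $e\in H\Lambda^1(M)\cap H^\star\Lambda^1(M)$. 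Along these directions both jump terms drop out, and the remaining linear term $-\langle dp,e\rangle+\langle p,\delta e\rangle$ vanishes by integration by parts on the closed manifold.
\end{itemize}

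Your version proves exactly what the theorem asserts, namely global minimality for \eqref{eq: YMabelian min delta omega}. It also makes explicit the convexity step that the paper leaves implicit when it passes from stationarity of $\mathcal L$ to a solution of the constrained problem. The paper's version is shorter and keeps the multipliers $p$ and $\lambda$ visible in the role that motivated the saddle-point formulation.
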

\begin{proof}
    We use the same representatives $\phi$ and $\gamma$ in the Lagrangian. Let $(\omega, p, \lambda)$ be a solution. First, \eqref{eq: variational problem2} implies that
    \[
        \langle dp, d\varphi \rangle  = 0
    \]
    for all $\varphi \in H\Lambda ^0(M)$. This and \eqref{eq: variational problem1} allow us to apply \Cref{lem: Jump minimize with p}, which tells us that $\delta^\mathcal T\omega = p$ and $\ljump q, \omega - \phi^\normal\rjump =0$. These two identities combined with \eqref{eq: variational problem2} and \eqref{eq: variational problem3} show that $(\omega, p, \lambda)$ is a stationary point of the Lagrangian $\mathcal L$ in \eqref{eq: Lagrangian min delta omega}:
    \begin{equation*}\begin{aligned}
        \delta \mathcal L(\omega, p , \lambda \, ;  \tau, q, \mu) = &  \langle d^\mathcal T\omega,d^\mathcal T \tau \rangle + \langle \delta^\mathcal T\omega, \delta^\mathcal T\tau\rangle \\ & + \ljump \omega - \gamma^\tangent , \mu \rjump  + \ljump q, \omega - \phi^\normal \rjump + \ljump \tau , \lambda \rjump + \ljump p, \tau \rjump  \\ = &
     \langle d^\mathcal T\omega,d^\mathcal T\tau \rangle + \langle \delta^\mathcal T\omega, \delta^\mathcal T\tau\rangle \\ & + \ljump \omega - \gamma^\tangent , \mu \rjump  + \ljump q, \omega - \phi^\normal \rjump + \ljump \tau , \lambda \rjump + \langle dp, \tau \rangle - \langle p, \delta ^\mathcal T \tau \rangle  \\ = &
      \big ( \langle d^\mathcal T \omega,d^\mathcal T \tau \rangle + \langle dp, \tau \rangle + \ljump \tau , \lambda \rjump \big )  + \ljump \omega - \gamma^\tangent , \mu \rjump  + \ljump q, \omega - \phi^\normal \rjump\\ =&  0
    \end{aligned}\end{equation*}
\end{proof}

\subsection{Well-posedness of the variational problem.}
We now proceed to show that the variational problem \eqref{eq: variational problem} is well-posed.
To simplify notation, define
$A\from \left(H\Lambda^1(\mathcal{T})\oplus H\Lambda^0(M)\right) \times \left(H\Lambda^1(\mathcal{T})\oplus H\Lambda^0(M)\right)\to \RR$
by
\begin{equation*}\label{Matrix A}
    A(\omega, p \mid \tau, q )  = \langle d^\mathcal{T} \omega , d^\mathcal{T} \tau \rangle  + \langle dq , \omega \rangle + \langle dp, \tau \rangle - \langle p, q \rangle 
\end{equation*}
and 
$B\from H\Lambda^1(\mathcal{T}) \times \mathcal{K}^\star \Lambda^2(\mathcal{T})\to \RR$ by
\[
    B(\omega\mid \mu) = \ljump \omega , \mu \rjump 
\]
\setlength{\forallsep}{1cm}
\begin{thm}\label{thm: Well posed A + B}
    Let $F\colon H\Lambda^1(\mathcal T) \oplus H\Lambda^0(M) \to \mathbb R $ and $G\colon \mathcal K ^\star \Lambda^2(\mathcal T)\to \mathbb R$ be any bounded linear operators, and $A$ and $B$ as above. Then the saddle-point problem
    \begin{equation}\label{eq: Saddle-point problem}
        \begin{aligned}
        A(\omega, p \mid \tau , q) +B(\tau \mid   \lambda) = &
        F(\tau,q ) &\forallspace &\forall (\tau, q) \in  H\Lambda^1(\mathcal T)\mathrlap{\oplus H\Lambda^0(M)} \\
        B(\omega \mid \mu ) =&  G(\mu) &\forallspace& \forall \mu \in  \mathcal K^\star\Lambda^2(\mathcal T)
        \end{aligned}
    \end{equation}
    is well-posed. 
\end{thm}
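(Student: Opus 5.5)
The plan is to apply the standard Brezzi theory for saddle-point problems \parencite{boffi2013mixed}. Because $A$ is indefinite (note the $-\langle p,q\rangle$ term), I will use the version that requires an inf-sup condition for $A$ on the kernel of $B$, not coercivity. Four ingredients are needed.

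\emph{Continuity.} Boundedness of $A$ follows from Cauchy--Schwarz, since every term is an $L^2$ pairing controlled by $\|\omega\|_{H}+\|p\|_{H}$. Boundedness of $B$ is the continuity lemma for the jump form proved above.

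\emph{Inf-sup for $B$.} For $\mu\in\mathcal K^\star\Lambda^2(\mathcal T)$, I will take $\tau=-\delta\mu$. By \Cref{prop: Isometries}, $\tau\in\mathcal J\Lambda^1(\mathcal T)\subset H\Lambda^1(\mathcal T)$ with $\|\tau\|_H=\|\mu\|_{H^\star}$. The identity $\ljump\delta\lambda,\mu\rjump=-\langle\lambda,\mu\rangle_{H^\star}$ then gives $B(\tau\mid\mu)=\|\mu\|_{H^\star}^2$. Hence the inf-sup constant for $B$ equals $1$.

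\emph{The kernel of $B$.} Let $Z=\{(\tau,q): B(\tau\mid\mu)=0\ \forall\mu\in\mathcal K^\star\Lambda^2(\mathcal T)\}$. By \Cref{lem: nojump K}, $Z=H\Lambda^1(M)\oplus H\Lambda^0(M)$. Restricted to $Z$, the form $A$ is exactly the mixed Hodge--Laplacian form for $1$-forms on $M$, with $p$ playing the role of the auxiliary $0$-form variable.

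\emph{Inf-sup for $A$ on $Z$ (main step).} Since $A$ is symmetric, one inf-sup condition suffices. I will follow the argument of \parencite[Thm.~3.2]{arnold2018finite}. Given $(\omega,p)\in Z$, Hodge decompose $\omega=d\rho+\omega^\perp$, with $\rho\perp$ constants and $\omega^\perp\perp dH\Lambda^0(M)$. There is no harmonic part because $\harmonic\Lambda^1(M)=0$. Poincaré inequalities then give
\begin{equation*}
\|\rho\|_{L^2}\le c\|d\rho\|_{L^2}, \qquad \|\omega^\perp\|_{L^2}\le c\|d\omega\|_{L^2}.
\end{equation*}
The test function will be $(\tau,q)=(\omega+dp,\,-p+t\rho)$ for a small $t>0$. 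Direct computation gives
\begin{equation*}
A(\omega,p\mid\omega,-p)=\|d\omega\|^2+\|p\|^2,\qquad A(\omega,p\mid dp,0)=\|dp\|^2,
\end{equation*}
\begin{equation*}
A(\omega,p\mid 0,\rho)=\|d\rho\|^2-\langle p,\rho\rangle .
\end{equation*}
I will bound $|\langle p,\rho\rangle|\le c\|p\|\,\|d\rho\|$ and absorb it by Young's inequality, choosing $t$ small. This yields
\begin{equation*}
A(\omega,p\mid\tau,q)\gtrsim \|d\omega\|^2+\|p\|^2+\|dp\|^2+\|d\rho\|^2\gtrsim \|\omega\|_H^2+\|p\|_H^2 .
\end{equation*}
The last step uses $\|\omega\|^2=\|d\rho\|^2+\|\omega^\perp\|^2$. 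Finally, $\|(\tau,q)\|\lesssim\|(\omega,p)\|$ because $\|\rho\|_H\lesssim\|d\rho\|\le\|\omega\|$.

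The delicate point is this last inf-sup step. Constants in $H\Lambda^0(M)$ are controlled only through the $-\|p\|^2$ contribution, so the test function must keep $-p$ and not merely $dp$. The assumption $\harmonic\Lambda^1(M)=0$ is essential to control $\omega^\perp$ by $d\omega$. With continuity and the two inf-sup conditions established, Brezzi's theorem gives existence, uniqueness and stability of $(\omega,p,\lambda)$ for any bounded $F$ and $G$.
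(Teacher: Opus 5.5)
Your proposal is correct and follows essentially the same route as the paper: Brezzi's theorem, with the kernel of $B$ identified as $H\Lambda^1(M)\oplus H\Lambda^0(M)$ via \Cref{lem: nojump K}, and the inf-sup for $B$ obtained from the isometry $\delta\colon\mathcal K^\star\Lambda^2(\mathcal T)\to\mathcal J\Lambda^1(\mathcal T)$. Your explicit choice $\tau=-\delta\mu$ is exactly what underlies the paper's appeal to \Cref{cor: isometry}. The only difference is that the paper cites the mixed Hodge--Laplacian stability result from FEEC for the inf-sup of $A$ on the kernel, whereas you reprove it by hand using $\harmonic\Lambda^1(M)=0$, Poincaré inequalities and the test pair $(\omega+dp,\,-p+t\rho)$; that computation is sound.
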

\begin{proof}
    A standard FEEC result shows that the bilinear form $A$ satisfies the inf-sup condition when $\omega , \tau \in H\Lambda^1(M) = \ker {\ljump \cdot {}^\tangent\rjump}$ \parencite[Theorem~4.9]{arnold2018finite}. This space is $\ker B$ in standard saddle-point notation \parencite{boffi2013mixed}.
    This verifies one of the two conditions of the relevant saddle-point theorem \parencite[Theorem~4.2.3]{boffi2013mixed}. We will show that the second one
    \begin{equation*}\label{infsup B}
        \inf_{\substack{\lambda \in \mathcal K^\star \Lambda^2(\mathcal T) \\ \| \lambda\|_{H^\star} = 1}} \sup_{\substack{\omega\in H\Lambda^1(\mathcal T)\\ \|\omega\|_H = 1}} B(\omega\mid \lambda) > 0
    \end{equation*}
    is also satisfied. Using \Cref{cor: isometry} we have
    \[
        \sup_{\substack{\omega\in H\Lambda^1(\mathcal T)\\ \|\omega\|_H = 1}} \ljump \omega, \lambda \rjump \geq     \sup_{\substack{\omega\in \mathcal J\Lambda^1(\mathcal T)\\ \|\omega\|_H = 1}} \ljump \omega, \lambda \rjump = \| \lambda\|_{H^\star}
    \]
    and then since $B(\omega\mid \lambda) = \ljump \omega, \lambda\rjump $ we have
    \[
       \inf_{\substack{\lambda \in \mathcal K^\star \Lambda^2(\mathcal T) \\ \| \lambda\|_{H^\star} = 1}}\sup_{\substack{\omega\in H\Lambda^1(\mathcal T)\\ \|\omega\|_H = 1}}  B(\omega\mid \lambda) \geq \inf_{\substack{\lambda \in \mathcal K^\star \Lambda^2(\mathcal T) \\ \| \lambda\|_{H^\star} = 1}} \| \lambda \|_{H^\star} = 1 \, .
    \]
    Since the two conditions of the saddle-point theorem are satisfied, the problem is well-posed \parencite[Theorem~4.2.3]{boffi2013mixed}.
\end{proof}
\begin{remark}
    The theorem above also holds if we replace $\mathcal T$ with $\mathcal T_h$, and $\ljump \cdot, \cdot \rjump$ with $\ljump \cdot, \cdot \rjump_{\mathcal T_h}$.
\end{remark}
If $F(\tau, q)=F_\psi(q)$, and $G(\mu)=G_\psi(\mu)$, we recover \eqref{eq: variational problem}, and from now on let  $F = F_\psi$ and $G = G_\psi$.
\subsection{Refined triangulation variational problem.}
\todo[inline,disable]{This section is in some sense contrary to the the statement at the beginning that there is a fixed triangulation with fixed sections. The point is that the triangulation serves two roles: 1. Defining the connection form by local sections 2.Defining the finite element spaces.
Typically we want to refine the finite element spaces, without changing the local sections. I don't think we handle this very well at present. We just write 'if the triangulation does not introduce any new jumps' or similar. I think we should be clearer and more careful here - Charles}

\setlength{\forallsep}{0cm}
\begin{prop}\label{prop: saddle point T_h}
    Suppose we replace $d^{\mathcal T}$ with $d^{\mathcal T_h}$ and $\ljump \cdot , \cdot \rjump$ with $\ljump \cdot , \cdot \rjump_{\mathcal T_h}$ in \eqref{eq: variational problem}, and extend the jump functional to the refined multiplier space by setting $G(\mu)=\ljump\gamma,\mu\rjump_{\mathcal T_h}$. Then we have another saddle point problem
     \begin{subequations}\label{eq: Saddle-point problem T_h}
        \begin{align}
        A_h(\omega, p \mid \tau , q) +B_h(\tau \mid   \lambda) = &
        F(q) &\forallspace &\forall (\tau, q) \in  H\Lambda^1(\mathcal T_h)\mathrlap{\oplus H\Lambda^0(M)} \label{eq: Saddle-point problem T_h 1}\\
        B_h(\omega \mid \mu ) =&  G(\mu) &\forallspace& \forall \mu \in  \mathcal K^\star\Lambda^2(\mathcal T_h)\label{eq: Saddle-point problem T_h 2}
        \end{align}
    \end{subequations}
    If $(\omega, p, \lambda)$ is the solution of \eqref{eq: variational problem}, or equivalently of \eqref{eq: Saddle-point problem T_h} with $\mathcal T_h = \mathcal T$, and $(\omega^h, p^h, \lambda^h)$ is a solution of \eqref{eq: Saddle-point problem T_h} for a refinement $\mathcal T_h$ of $\mathcal T$, then $(\omega^h, p^h, (\lambda^h)^\normal) = (\omega, p, \lambda)$, where $\normal$ denotes the projection $\normal \colon H^\star\Lambda^2(M) \to \mathcal K^\star\Lambda^2(\mathcal T) $.
\end{prop}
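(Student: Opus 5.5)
The plan is to show that the refined solution, with its multiplier projected, satisfies the coarse problem \eqref{eq: variational problem}. Uniqueness, coming from the well-posedness in \Cref{thm: Well posed A + B}, then gives the equality. Let $(\omega^h,p^h,\lambda^h)$ solve \eqref{eq: Saddle-point problem T_h}. Note that $\lambda^h\in\mathcal K^\star\Lambda^2(\mathcal T_h)\subset H^\star\Lambda^2(M)$, so $(\lambda^h)^\normal\in\mathcal K^\star\Lambda^2(\mathcal T)$ makes sense. I will verify the three equations \eqref{eq: variational problem1}--\eqref{eq: variational problem3} for $(\omega^h,p^h,(\lambda^h)^\normal)$ in turn.

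\emph{Step 1: $\omega^h\in H\Lambda^1(\mathcal T)$.} From \eqref{eq: Saddle-point problem T_h 2} we have $\ljump \omega^h-\gamma,\mu\rjump_{\mathcal T_h}=0$ for all $\mu\in\mathcal K^\star\Lambda^2(\mathcal T_h)$. By \Cref{lem: nojump K} applied to $\mathcal T_h$, this gives $\omega^h-\gamma\in H\Lambda^1(M)$. Since $\gamma\in H\Lambda^1(\mathcal T)$, it follows that $\omega^h\in H\Lambda^1(\mathcal T)$; this is exactly \Cref{lem: No refined jump}.

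\emph{Step 2: the constraint equation.} Let $\mu\in\mathcal K^\star\Lambda^2(\mathcal T)\subset H^\star\Lambda^2(M)$. Because $\omega^h-\gamma\in H\Lambda^1(M)$, \Cref{lem: nojump} gives $\ljump \omega^h-\gamma,\mu\rjump=0$. This is \eqref{eq: variational problem3}.

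\emph{Step 3: the $q$-equation.} Equation \eqref{eq: variational problem1} involves only $\omega^h$, $p^h$ and test functions $q\in H\Lambda^0(M)$. It is therefore literally the $q$-part of \eqref{eq: Saddle-point problem T_h 1}, taken with $\tau=0$.

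\emph{Step 4: the $\tau$-equation, which is the main point.} Take $\tau\in H\Lambda^1(\mathcal T)\subset H\Lambda^1(\mathcal T_h)$ in \eqref{eq: Saddle-point problem T_h 1}. By \Cref{lem: Refining does not add jumps}, $d^{\mathcal T_h}\tau=d^{\mathcal T}\tau$ and $d^{\mathcal T_h}\omega^h=d^{\mathcal T}\omega^h$, the latter using Step 1. The same lemma gives $\ljump\tau,\lambda^h\rjump_{\mathcal T_h}=\ljump\tau,\lambda^h\rjump_{\mathcal T}$. It remains to replace $\lambda^h$ by $(\lambda^h)^\normal$. By \Cref{def: Kdef}, $\lambda^h-(\lambda^h)^\normal$ is $H^\star$-orthogonal to $\mathcal K^\star\Lambda^2(\mathcal T)$. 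Hence it lies in the (closed) right kernel of $\ljump\cdot,\cdot\rjump\colon H\Lambda^1(\mathcal T)\times H^\star\Lambda^2(M)\to\RR$, and so $\ljump\tau,\lambda^h\rjump=\ljump\tau,(\lambda^h)^\normal\rjump$ for every $\tau\in H\Lambda^1(\mathcal T)$. This is the identity $\ljump\tau,\varphi\rjump=\ljump\tau^\tangent,\varphi^\normal\rjump$ from the commuting diagram. Together these give \eqref{eq: variational problem2}. The right-hand side $F$ depends only on $q$, so it is unaffected by the restriction of test spaces.

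\emph{Conclusion.} $(\omega^h,p^h,(\lambda^h)^\normal)$ solves \eqref{eq: variational problem}, and the uniqueness part of \Cref{thm: Well posed A + B} gives $(\omega^h,p^h,(\lambda^h)^\normal)=(\omega,p,\lambda)$. The main obstacle I expect is the bookkeeping in Step 4. One must check that the right kernel is taken over the coarse broken space $H\Lambda^1(\mathcal T)$, not $H\Lambda^1(\mathcal T_h)$, and that it is closed, so that the orthogonal decomposition of $H^\star\Lambda^2(M)$ is legitimate. One must also check that $G(\mu)=\ljump\gamma,\mu\rjump_{\mathcal T_h}$ agrees with $G_\psi$ on $\mathcal K^\star\Lambda^2(\mathcal T)$, which follows from \Cref{lem: Refining does not add jumps} since $\gamma\in H\Lambda^1(\mathcal T)$.
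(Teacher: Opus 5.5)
Your proof is correct and follows the paper's route. You use \eqref{eq: Saddle-point problem T_h 2} to get $\omega^h\in H\Lambda^1(\mathcal T)$, check that $(\omega^h,p^h,(\lambda^h)^\normal)$ solves \eqref{eq: variational problem}, and conclude by uniqueness from \Cref{thm: Well posed A + B}. Two of your steps are more careful than the paper's: Step 4 replaces $\lambda^h$ by $(\lambda^h)^\normal$ using the orthogonal splitting of $H^\star\Lambda^2(M)$ into the closed right kernel and $\mathcal K^\star\Lambda^2(\mathcal T)$, which avoids the paper's ``difference of the two solutions'' argument that tacitly presupposes $\omega^h=\omega$ and $p^h=p$, and Step 1 uses \Cref{lem: nojump K} to supply the passage from testing against $\mathcal K^\star\Lambda^2(\mathcal T_h)$ to testing against all of $H^\star\Lambda^2(M)$, which \Cref{lem: No refined jump} requires.
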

\begin{proof}
    Suppose $(\omega^h, p^h,\lambda^h)$ is a solution to \eqref{eq: Saddle-point problem T_h} with $\mathcal T_h$ a refinement of $\mathcal T$. Then \eqref{eq: Saddle-point problem T_h 2} implies by \Cref{lem: No refined jump}, that $\omega^h \in H\Lambda^1(\mathcal T)$. It is then clear that $(\omega^h, p^h, \lambda^h)$ solves \eqref{eq: Saddle-point problem T_h 1} with $\mathcal T_h = \mathcal T$. Taking the difference of \eqref{eq: variational problem2} for the two solutions, we have
    \[
        \ljump \tau, \lambda^h - \lambda \rjump = 0
    \]
    for all $\tau \in H\Lambda^1(\mathcal T)$, which is equivalent to $(\lambda^h)^\normal = \lambda$. It is now clear that $(\omega^h,p^h,(\lambda^h)^\normal)$ is a solution to \eqref{eq: Saddle-point problem T_h} with $\mathcal T_h = \mathcal T$, and uniqueness gives $(\omega^h, p^h, (\lambda^h)^\normal) = (\omega, p, \lambda)$.
\end{proof}

\section{Finite element subspaces}

For finite element approximations of the Yang--Mills variational problem, it is useful to employ spaces from finite element exterior calculus \parencite{arnold2018finite}. For a thorough introduction, see the standard references \parencite{arnold2006finite,arnold2018finite}.

We will use the ``trimmed'' polynomial spaces over a simplex $T$,  $\mathcal{P}_r^- \Lambda^{k}(T)$, where $k$ is the order of the differential forms, and $r$ is a polynomial degree. 

To illustrate these spaces, \Cref{tab: FE} shows bases when $T=\Delta_2$ is the standard simplex in $\RR^2.$

\begin{table}[ht]
\begin{tabular}{c|p{8em}p{11em}p{11em}}
  & $k=0$ & $k=1$ & $k=2$\\
  \hline
  $r=1$ &  $1,x,y$& $dx,dy,xdy-ydx$ & $dx\wedge dy$\\
  $r=2$ & $1,x,y,x^2,xy,y^2$ & $dx,dy,xdx,ydx$,$ydy$,$xdy$, $x(xdy-ydx),y(xdy-ydx)$ & $dx\wedge dy,xdx\wedge dy,$ $ydx\wedge dy$\\
\end{tabular}
\caption{Bases for $\mathcal{P}_r^- \Lambda^{k}(\Delta_2)$}
\label{tab: FE}
\end{table}
When $T$ is a two-dimensional simplex on a manifold $M$, or is embedded in $\RR^3$, a diffeomorphism $\phi\from \Delta_2\to T$ can be used to push-forward the space from $\Delta_2$ to $T$.
The advantage of these spaces is that they interact naturally with the exterior derivative. More precisely, they form the chain complex
\[
0\rightarrow \RR \rightarrow \mathcal{P}^-_r\Lambda^0(T) \xrightarrow{d} \mathcal{P}^-_r\Lambda^1(T) \xrightarrow{d} \mathcal{P}^-_r\Lambda^2(T) \rightarrow 0
\]
\subsection{Traces of polynomial forms in two dimensions.}
Let $T$ be the standard reference triangle in $\mathbb R^2$, and let $i\colon \partial T \to T$ be the inclusion. Let $\partial T = f_0\cup f_1 \cup f_2$, where the $f_j$ are facets. The basis
\begin{align*}
    \phi_0  =(1-y)dx +xdy && \phi_1 = ydx + (1-x)dy && \phi_2 = -ydx + xdy
\end{align*}
of $\mathcal P _1^-\Lambda^1(T)$ is such that
\begin{align*}
    i_{f_0}^\ast\phi_0 = \vol_{f_0} && i_{f_1}^\ast\phi_1 = \vol_{f_1} &&  i_{f_2}^\ast\phi_2 = \vol_{f_2} &&
\end{align*}
and in general $i^\ast_{f_i} \phi_j = \delta_{ij}\vol_{f_j}$.
This allows us to prove the following.
\begin{lemma}\label{Polynomial trace}
  Let $T$ and $i$ be as above. Then for each $r\geq 1$, the map $i^\ast\colon  \mathcal P_{r+1}^-\Lambda^1(T) \to \bigoplus_j \mathcal P_{r}\Lambda^1(f_j)$ is surjective. 
\end{lemma}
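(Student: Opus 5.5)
The plan is to build a preimage of each edge component separately, using the Whitney basis $\phi_0,\phi_1,\phi_2$ introduced just above, whose traces satisfy $i^\ast_{f_i}\phi_j=\delta_{ij}\vol_{f_j}$.

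Fix a target $(\alpha_0,\alpha_1,\alpha_2)\in\bigoplus_j\mathcal P_r\Lambda^1(f_j)$. Since each $f_j$ is one-dimensional, every polynomial $1$-form on it can be written as $\alpha_j=g_j\vol_{f_j}$ with $g_j\in\mathcal P_r(f_j)$. Extend each $g_j$ to a polynomial $G_j\in\mathcal P_r(T)$ with $G_j|_{f_j}=g_j$. Concretely, parametrize $f_j$ affinely by a barycentric (or Cartesian) coordinate $s$ of $T$ that is affine along $f_j$. Writing $g_j$ as a polynomial in $s$ then defines $G_j$ on all of $T$, with the same degree. The candidate preimage is
\[
\omega=\sum_{j=0}^{2}G_j\phi_j .
\]
Because pullback commutes with multiplication by functions, $i^\ast_{f_i}\omega=\sum_j (G_j|_{f_i})\,\delta_{ij}\vol_{f_j}=g_i\vol_{f_i}=\alpha_i$. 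Hence $i^\ast\omega=(\alpha_0,\alpha_1,\alpha_2)$. This step needs no compatibility condition at the vertices: the traces of $1$-forms on distinct edges are independent, and each $\phi_j$ already kills the other two edges.

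The main obstacle is to show that $\omega\in\mathcal P^-_{r+1}\Lambda^1(T)$, that is, that $G\phi_j$ lies in the trimmed space whenever $G\in\mathcal P_r(T)$. I would use the characterization
\[
\mathcal P^-_{r+1}\Lambda^1(T)=\{\eta\in\mathcal P_{r+1}\Lambda^1(T):\kappa\eta\in\mathcal P_{r+1}\Lambda^0(T)\},
\]
where $\kappa$ is the Koszul operator (contraction with $x\,\partial_x+y\,\partial_y$). This is equivalent to $\mathcal P_r\Lambda^1+\kappa\mathcal P_r\Lambda^2$. The coefficients of $\phi_j$ have degree at most $1$, so $G\phi_j\in\mathcal P_{r+1}\Lambda^1$. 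Moreover $\kappa$ is $C^\infty$-linear, so $\kappa(G\phi_j)=G\,\kappa\phi_j$. A direct computation gives $\kappa\phi_0=x(1-y)+yx=x$, $\kappa\phi_1=y$, and $\kappa\phi_2=0$, each of degree at most $1$. Therefore $\kappa(G\phi_j)$ has degree at most $r+1$, and the criterion is satisfied. Alternatively, one can cite the standard wedge property $\mathcal P_r\Lambda^0\wedge\mathcal P^-_1\Lambda^1\subset\mathcal P^-_{r+1}\Lambda^1$ from finite element exterior calculus. Linearity then gives $\omega\in\mathcal P^-_{r+1}\Lambda^1(T)$, which proves surjectivity.
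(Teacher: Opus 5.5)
Your proof is correct and follows the paper's argument: both extend each edge polynomial to $T$ and take $\omega=\sum_j G_j\phi_j$, with the traces handled by $i^\ast_{f_i}\phi_j=\delta_{ij}\vol_{f_j}$. The only difference is how membership in the trimmed space is checked. The paper writes $\phi_0=dx+\eta$, $\phi_1=dy-\eta$, $\phi_2=\eta$ with $\eta=\kappa(dx\wedge dy)$, so each $G_j\phi_j$ lies directly in $\mathcal P_r\Lambda^1+\kappa\mathcal P_r\Lambda^2$; you instead use the equivalent criterion $\kappa\omega\in\mathcal P_{r+1}\Lambda^0$, via $\kappa\phi_0=x$, $\kappa\phi_1=y$, $\kappa\phi_2=0$.
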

\begin{proof}
Let $\varphi_j=p_j\vol_{f_j}\in\mathcal P_r\Lambda^1(f_j)$, and choose an extension $\widetilde p_j\in\mathcal P_r(T)$ such that $i_{f_j}^\ast\widetilde p_j=p_j$. Set $\eta=\kappa(dx\wedge dy)$. Since
\[
    \phi_0=dx+\eta, \qquad \phi_1=dy-\eta, \qquad \phi_2=\eta,
\]
we have
\[
    \widetilde p_j\phi_j\in
    \mathcal P_r\Lambda^1(T)+\kappa\bigl(\mathcal P_r\Lambda^2(T)\bigr)
    =\mathcal P_{r+1}^-\Lambda^1(T)
\]
for each $j$. Therefore $\omega=\sum_{j=0}^2\widetilde p_j\phi_j$ belongs to $\mathcal P_{r+1}^-\Lambda^1(T)$ and satisfies
\[
    i^\ast\omega=(\varphi_0,\varphi_1,\varphi_2).
\]
\end{proof}
\subsection{Jumps for finite element forms.}
This subsection is the finite element counterpart of \Cref{sec: Function spaces on the base manifold}, where we defined analogous spaces and obtained analogous results. Given a triangulation $\mathcal T_h$ of $M$, we construct the discontinuous finite element space
\[
\begin{aligned}
D\mathcal{P}^-_r\Lambda^k(\mathcal T_h)&= \bigoplus_{T \in \mathcal T_h} \mathcal{P}^-_r\Lambda^k(T) \subset H\Lambda^k(\mathcal T_h)
\end{aligned}
\]
A natural question is when a finite element form $\tau_h \in D\mathcal{P}^-_r\Lambda^1(\mathcal T_h)$ is contained in the continuous finite element space $\mathcal P_r^-\Lambda^1(\mathcal T_h)$. Define the skeleton space
\[
    V\coloneqq\bigoplus_{f\in\mathcal F_h}L^2\Lambda^1(f)
\]
with its natural $L^2$-inner product, and define $Q_h\colon D\mathcal P_r^-\Lambda^1(\mathcal T_h)\to V$ by
\[
    Q_h(\tau_h) = \bigoplus_{\varepsilon _{ji}= 1}\left(i_f^\ast(\tau_h|_{T_j}) - i_f^\ast(\tau_h|_{T_i})\right).
\]
Also, let $P_{\mathcal J_h}\colon H\Lambda^1(\mathcal T_h) \to \mathcal J\Lambda^1(\mathcal T_h)$ be the orthogonal projection. We have the following characterization.
\begin{prop}\label{prop: jumps polynomial}
    Let $\tau_h \in D\mathcal{P}^-_r\Lambda^1(\mathcal T_h)$. Then the following are equivalent:
    \begin{enumerate}[label=\textup{\roman*.}, ref=\textup{\roman*}]
        \item $\tau_h \in \mathcal P^-_r\Lambda^1(\mathcal T_h)$ \label{item: P}
        \item $\tau_h \in H\Lambda^1(M)$ \label{item: H}
        \item For each $ f = T_i\cap T_j  \in \mathcal F_h$ we have $i^\ast_f (\tau_h|_{T_j})- i_f^\ast (\tau_h|_{T_i}) = 0$ \label{item: i*}
        \item For each $f = T_i\cap T_j\in \mathcal F_h, d \geq 1, g \in \Delta_d(f), \varphi \in \mathcal P_{r-d}\Lambda^{d-1}( g)$ we have \[\int_g \big( i^\ast_g(\tau_h|_{T_j})- i_g^\ast (\tau_h|_{T_i})\big)\wedge \varphi =0 \] \label{item: DOF jump}
        \item $Q_h(\tau_h) =0$ \label{item: Q}
        \item For each $\mu \in \mathcal K^\star \Lambda^2(\mathcal T_h) $ we have $\ljump \tau_h , \mu \rjump_{\mathcal T_h} =0 $.\label{item: cont jump point}
        \item $P_{\mathcal J_h}\tau_h = 0 $ \label{item: Projection}
    \end{enumerate}
\end{prop}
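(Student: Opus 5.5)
I would establish the seven conditions through a cycle of implications. Most of them reduce to standard facts from finite element exterior calculus together with the characterization of $H\Lambda^1(M)$ proved above.

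\textbf{The chain (\ref{item: P})$\Leftrightarrow$(\ref{item: H})$\Leftrightarrow$(\ref{item: i*})$\Leftrightarrow$(\ref{item: DOF jump})$\Leftrightarrow$(\ref{item: Q}).}
\begin{itemize}
\item The equivalence (\ref{item: P})$\Leftrightarrow$(\ref{item: H}) is the standard FEEC fact that $\mathcal P_r^-\Lambda^1(\mathcal T_h)=D\mathcal P_r^-\Lambda^1(\mathcal T_h)\cap H\Lambda^1(M)$; we cite \parencite{arnold2006finite}.
\item For (\ref{item: H})$\Leftrightarrow$(\ref{item: i*}), we use Stokes' identity \eqref{eq: Stokes tangent jump }. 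It is valid for piecewise smooth $\tau_h$ tested against smooth $\varphi\in C^\infty\Lambda^2(M)$. By \Cref{lem: nojump}, $\tau_h\in H\Lambda^1(M)$ if and only if
\[\sum_{\varepsilon_{ji}=1}\int_f\bigl(i_f^\ast\tau_j-i_f^\ast\tau_i\bigr)\wedge i_f^\ast(\star\varphi)=0\]
for all smooth $\varphi$; here we use density of smooth forms in $H^\star\Lambda^2(M)$ and continuity of the jump form. If the traces agree, the sum vanishes. Conversely, the traces $i_f^\ast(\star\varphi)$ of smooth $\varphi$ can be localized to a single facet and can approximate any smooth $0$-form on that facet. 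Hence the polynomial trace difference, which is continuous, must vanish on each $f$.
\item (\ref{item: i*})$\Leftrightarrow$(\ref{item: Q}) is immediate from the definition of $Q_h$.
\item For (\ref{item: i*})$\Leftrightarrow$(\ref{item: DOF jump}), one direction is trivial, since traces to subsimplices $g\subset f$ factor through $i_f^\ast$. For the other, note that $i^\ast_f(\tau_h|_{T_j})-i^\ast_f(\tau_h|_{T_i})\in\mathcal P_r^-\Lambda^1(f)$. The moments in (\ref{item: DOF jump}) over the subsimplices $g$ of $f$ are exactly the unisolvent degrees of freedom of $\mathcal P_r^-\Lambda^1(f)$ \parencite{arnold2006finite}, so vanishing moments force the trace difference to vanish.
\end{itemize}

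\textbf{The remaining conditions.}
\begin{itemize}
\item (\ref{item: H})$\Leftrightarrow$(\ref{item: cont jump point}) is exactly \Cref{lem: nojump K}, applied with $\mathcal T_h$ in place of $\mathcal T$; all constructions carry over to the refinement.
\item (\ref{item: H})$\Leftrightarrow$(\ref{item: Projection}) follows from the orthogonal decomposition $H\Lambda^1(\mathcal T_h)=\mathcal J\Lambda^1(\mathcal T_h)\oplus H\Lambda^1(M)$ of \Cref{thm: Decomposition J Lambda}, again on $\mathcal T_h$. The kernel of the orthogonal projection onto $\mathcal J\Lambda^1(\mathcal T_h)$ is precisely $H\Lambda^1(M)$.
\end{itemize}

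\textbf{Main obstacle.} The delicate step is (\ref{item: H})$\Rightarrow$(\ref{item: i*}): recovering a pointwise trace identity from a weak condition. Two things need care. First, the boundary integrals must be justified for the pieces, which are smooth up to the boundary on each curved simplex via the chart, so the Stokes identity \eqref{eq: Stokes tangent jump } applies. Second, the test traces $i_f^\ast(\star\varphi)$ must be rich enough. I would handle the second point by taking $\varphi$ supported in a small neighbourhood of an interior point of a single facet $f$, away from all other facets. Choosing $\star\varphi=\psi\,\nu^\flat$-type extensions of arbitrary compactly supported smooth functions on $f$ shows that the trace difference is $L^2$-orthogonal to a dense set, and hence zero.
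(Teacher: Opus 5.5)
Your proposal is correct and follows essentially the paper's route. The paper cites \parencite[Theorem~5.1]{arnold2006finite} for (i)--(iii), unisolvence of the $\mathcal P_r^-\Lambda^1(f)$ degrees of freedom for (iii)$\Leftrightarrow$(iv), the definition of $Q_h$ for (v), and \Cref{lem: nojump K} for (vi); you differ only in proving (ii)$\Leftrightarrow$(iii) by hand and in getting (vii) from \Cref{thm: Decomposition J Lambda} rather than \Cref{cor: isometry}. In that hand argument, let $\star\varphi$ be a cut-off extension of an arbitrary compactly supported $(n-2)$-form on $f$ (for $n=2$, simply $\varphi=\tilde\psi\,\vol$), because a factor $\nu^\flat$ in $\star\varphi$ has vanishing trace on $f$ and would test nothing.
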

\newcommand{\jumpref}[1]{\hyperref[#1]{\Cref*{prop: jumps polynomial}.\textit{\labelcref*{#1}}}}
\begin{proof}
    A standard FEEC result \parencite[Theorem~5.1]{arnold2006finite} gives equivalences \labelcref*{item: P}.--\labelcref*{item: i*}., and  \labelcref*{item: i*}.$\Longleftrightarrow$\labelcref*{item: DOF jump}. comes from the fact that the DOFs of each $\mathcal P_r^-\Lambda^1(f)$ are unisolvent. Next, \labelcref*{item: Q}. is just  \labelcref*{item: i*}. where we look at all $f \in \mathcal F_h$ together. Lastly, since $D\mathcal P_r^- \Lambda^1(\mathcal T_h) \subset H\Lambda^1(\mathcal T_h)$ we get equivalences \labelcref*{item: H}.$\Longleftrightarrow$\labelcref*{item: cont jump point}. $\Longleftrightarrow$ \labelcref*{item: Projection}. by \Cref{lem: nojump K} and \Cref{cor: isometry}
\end{proof}

The characterizations serve different purposes. For the analysis of the finite element solution, \jumpref{item: cont jump point} is the most natural; for implementation, \jumpref{item: Q} is the most practical; and for jumps of interpolants $\Pi_{\mathcal T}^h\tau$, \jumpref{item: DOF jump} is the most useful.

Notice that $P_{\mathcal J_h}|_{D\mathcal P_r^-\Lambda^1(\mathcal T_h)}\colon D\mathcal P_r^-\Lambda^1(\mathcal T_h) \to \mathcal J\Lambda^1(\mathcal T_h)$ and $Q_h\colon D\mathcal P_r^-\Lambda^1(\mathcal T_h)\to V$ have the same kernel, namely $\mathcal P_r^-\Lambda^1(\mathcal T_h)$. Taking its orthogonal complement in $D\mathcal P_r^-\Lambda^1(\mathcal T_h)$ defines the space $J_h$. Let $\mathcal B_h$ and $V_h$ be the images of $P_{\mathcal J_h}$ and $Q_h$, respectively. Then we have induced isomorphisms
\begin{equation}\label{eq: isomorphisms V_h J_h B_h}
\begin{tikzcd}
        V_h & \ar[l, "Q_h",swap, "\sim"'] J_h \ar[r, "P_{\mathcal J_h}", "\sim"'] & \mathcal B_h
\end{tikzcd}
\end{equation}
To enforce the equivalent constraints $Q_h(\tau_h)=0$ or $P_{\mathcal J_h}(\tau_h)=0$, we could use Lagrange multipliers in $V_h$ and $\mathcal B_h$, respectively. Instead of using $\mathcal B_h$ as the second multiplier space, we use its isometric image $K_h^\star \coloneq d^{\mathcal T_h} \mathcal B_h \subset \mathcal K^\star\Lambda^2(\mathcal T_h)$ from \Cref{prop: Isometries}. For $Q_h(\tau_h)$, we use $\langle Q_h(\tau_h),v_h\rangle_V$.

We construct an isomorphism $R_h\colon K_h^\star\to V_h$. Let $\mu_h\in K_h^\star$. The map $\ljump Q_h^{-1}(\,\cdot\,),\mu_h\rjump_{\mathcal T_h}$ is an element of $(V_h)'$. The Riesz representation theorem therefore gives $R_h\mu_h\in V_h$ satisfying
\[
    \langle v_h, R_h \mu_h \rangle _V = \ljump Q_h^{-1}(v_h) , \mu_h \rjump_{\mathcal T_h}
\]
for all $v_h \in V_h$. This is linear because both Riesz and the jump form are linear. We automatically have
\[
 \langle Q_h\tau_h, R_h \mu_h\rangle_V= \ljump \tau_h , \mu_h \rjump_{\mathcal T_h} 
\]
for all $\tau_h \in J_h$. The properties of $R_h$ imply that $R_h$ is an isomorphism.  Since we have both sides equal to zero for $\tau_h \in \mathcal P_r^-\Lambda^1(\mathcal T_h)$, the identity above also holds for each $\tau_h \in D\mathcal P_r^-\Lambda^1(\mathcal T_h)$.
\subsection{Jump projections.}
\begin{lemma}\label{lem: Projection right}
    There exists a map $P_{K_h^\star}\colon  \mathcal K^\star \Lambda^2(\mathcal T_h) \to K_h^\star$ such that
    \[
        \ljump \tau_h  , P_{K_h^\star}\mu  - \mu \rjump_{\mathcal T_h} = 0
    \]
    for all $\tau_h \in D\mathcal P_r^-\Lambda^1(\mathcal T_h)$. 
\end{lemma}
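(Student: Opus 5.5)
The plan is to define $P_{K_h^\star}$ as an orthogonal projection in the Hilbert space $\mathcal K^\star\Lambda^2(\mathcal T_h)$ with the $H^\star$-inner product, and to convert the jump pairing against $D\mathcal P_r^-\Lambda^1(\mathcal T_h)$ into that inner product using the isometries of \Cref{prop: Isometries}. Since $K_h^\star = d^{\mathcal T_h}\mathcal B_h$ is finite dimensional, it is closed in $\mathcal K^\star\Lambda^2(\mathcal T_h)$. I would therefore let $P_{K_h^\star}$ be the $H^\star$-orthogonal projection onto $K_h^\star$, so that $\mu - P_{K_h^\star}\mu \perp_{H^\star} K_h^\star$. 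It then remains to show $\ljump \tau_h, \nu\rjump_{\mathcal T_h}=0$ for every $\tau_h\in D\mathcal P_r^-\Lambda^1(\mathcal T_h)$ and every $\nu\in\mathcal K^\star\Lambda^2(\mathcal T_h)$ that is $H^\star$-orthogonal to $K_h^\star$.

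First I reduce to the jump part of $\tau_h$. Write $\tau_h = \tau_h^c + (\tau_h - \tau_h^c)$ with $\tau_h^c \in H\Lambda^1(M)$ the component killed by $P_{\mathcal J_h}$; abstractly, $\tau_h = P_{\mathcal J_h}\tau_h + (I-P_{\mathcal J_h})\tau_h$ with $(I-P_{\mathcal J_h})\tau_h\in H\Lambda^1(M)$ by \Cref{thm: Decomposition J Lambda}. By \Cref{lem: nojump}, elements of $H\Lambda^1(M)$ have vanishing jump against every $\nu\in H^\star\Lambda^2(M)$, so
\[
\ljump \tau_h,\nu\rjump_{\mathcal T_h}=\ljump P_{\mathcal J_h}\tau_h,\nu\rjump_{\mathcal T_h},
\]
and $b_h := P_{\mathcal J_h}\tau_h \in \mathcal B_h$ by definition of $\mathcal B_h$.

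Next, for $b_h\in\mathcal B_h\subset \mathcal J\Lambda^1(\mathcal T_h)$ we have $d^{\mathcal T_h}b_h\in K_h^\star$. By \Cref{prop: Isometries}, $-d^{\mathcal T_h}$ is inverse to $\delta$, so $b_h = \delta\lambda_h$ with $\lambda_h := -d^{\mathcal T_h}b_h\in K_h^\star$. The identity $\ljump\delta\lambda,\mu\rjump=-\langle\lambda,\mu\rangle_{H^\star}$ then gives
\[
\ljump b_h,\nu\rjump_{\mathcal T_h} = -\langle \lambda_h,\nu\rangle_{H^\star} = 0,
\]
because $\nu\perp_{H^\star}K_h^\star$. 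Applying this with $\nu=P_{K_h^\star}\mu-\mu$ yields the claim.

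The main obstacle is the bookkeeping that $\mathcal B_h$, defined as the image of $P_{\mathcal J_h}$ restricted to the discrete space, really lies in $\mathcal J\Lambda^1(\mathcal T_h)$ so that \Cref{prop: Isometries} applies. This holds by construction. The other point to check is that the decomposition in \Cref{thm: Decomposition J Lambda} is orthogonal with $(I-P_{\mathcal J_h})$ landing in $H\Lambda^1(M)$, and this is exactly the first equality there.
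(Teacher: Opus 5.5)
Your proposal is correct and is essentially the paper's argument. The paper obtains $P_{K_h^\star}\mu$ by Riesz-representing $\ljump\cdot,\mu\rjump_{\mathcal T_h}$ on $\mathcal B_h$ through the isometry $\mathcal B_h\cong K_h^\star$; by the identity $\ljump\delta\lambda,\nu\rjump=-\langle\lambda,\nu\rangle_{H^\star}$ this is the same map as your $H^\star$-orthogonal projection onto $K_h^\star$, and the paper then extends from $\mathcal B_h$ to $D\mathcal P_r^-\Lambda^1(\mathcal T_h)$ via $\tau_h-P_{\mathcal J_h}\tau_h\in H\Lambda^1(M)$, exactly as you do.
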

\begin{proof}
    The element $\mu \in \mathcal K^\star\Lambda^2(\mathcal T_h)$ defines a linear map $ \ljump \cdot , \mu \rjump \colon \mathcal B_h \to \mathbb R$. By Riesz and \Cref{cor: isometry}, there exists some $P_{K_h^\star}\mu \in K_h^\star$ such that $\ljump \tau_h, P_{K_h^\star}\mu \rjump_{\mathcal T_h} = \ljump \tau_h , \mu \rjump_{\mathcal T_h}$ for all $\tau_h \in \mathcal B_h$. Since $\tau_h-P_{\mathcal J_h}\tau_h\in H\Lambda^1(M)$, we have
    \[
        \ljump \tau_h, P_{K_h^\star}\mu \rjump_{\mathcal T_h} =\ljump P_{\mathcal J_h}\tau_h, P_{K_h^\star}\mu \rjump_{\mathcal T_h} =\ljump P_{\mathcal J_h}\tau_h, \mu \rjump_{\mathcal T_h}= \ljump \tau_h , \mu \rjump_{\mathcal T_h}
    \]
    for all $\tau_h \in D\mathcal P_r^-\Lambda^1(\mathcal T_h)$.
\end{proof}
\begin{cor}\label{cor: discrete kernel}
    Let $\tau_h \in D\mathcal P_r^-\Lambda^1(\mathcal T_h)$. If 
    \[
        \ljump \tau_h , \mu_h \rjump_{\mathcal T_h} = 0 
    \]
    for all $\mu_h \in K_h^\star$, then $\tau_h \in \mathcal P_r^-\Lambda^1(\mathcal T_h)$
\end{cor}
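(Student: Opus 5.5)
The plan is to reduce the statement to \Cref{lem: Projection right} together with the characterization in \Cref{prop: jumps polynomial}, specifically the equivalence of \jumpref{item: P} and \jumpref{item: cont jump point}. Fix $\tau_h \in D\mathcal P_r^-\Lambda^1(\mathcal T_h)$ with $\ljump \tau_h,\mu_h\rjump_{\mathcal T_h}=0$ for every $\mu_h\in K_h^\star$. It suffices to show that $\ljump \tau_h,\mu\rjump_{\mathcal T_h}=0$ for every $\mu$ in the full space $\mathcal K^\star\Lambda^2(\mathcal T_h)$. \Cref{prop: jumps polynomial} then gives $\tau_h\in\mathcal P_r^-\Lambda^1(\mathcal T_h)$.

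The key step is a single application of \Cref{lem: Projection right}. Given arbitrary $\mu\in\mathcal K^\star\Lambda^2(\mathcal T_h)$, we have $P_{K_h^\star}\mu\in K_h^\star$ and
\[
\ljump \tau_h,\mu\rjump_{\mathcal T_h} = \ljump \tau_h, P_{K_h^\star}\mu\rjump_{\mathcal T_h}.
\]
This identity is valid because $\tau_h$ lies in $D\mathcal P_r^-\Lambda^1(\mathcal T_h)$, which is exactly the class of test forms for which the lemma holds. The right-hand side vanishes by hypothesis, so $\ljump\tau_h,\mu\rjump_{\mathcal T_h}=0$ for all such $\mu$, and the conclusion follows.

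The only point to check, which I expect to be the sole (mild) obstacle, is that \Cref{lem: Projection right} really applies to all of $D\mathcal P_r^-\Lambda^1(\mathcal T_h)$ and not only to $\mathcal B_h$ or $J_h$. Its proof extends the identity from $\mathcal B_h$ by writing $\tau_h = P_{\mathcal J_h}\tau_h + (\tau_h - P_{\mathcal J_h}\tau_h)$, where the second summand lies in $H\Lambda^1(M)$ and so has zero jump against every element of $\mathcal K^\star\Lambda^2(\mathcal T_h)\subset H^\star\Lambda^2(M)$. That the first summand lies in $\mathcal B_h$ holds by definition, since $\mathcal B_h$ is the image of $P_{\mathcal J_h}$ restricted to the discrete space.

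As an alternative route, one can use that $P_{\mathcal J_h}\tau_h\in\mathcal B_h$ and that $\mathcal B_h$ pairs nondegenerately with $K_h^\star=d^{\mathcal T_h}\mathcal B_h$. By \Cref{prop: Isometries}, taking $\mu_h=d^{\mathcal T_h}P_{\mathcal J_h}\tau_h$ gives
\[
0=\ljump \tau_h,\mu_h\rjump_{\mathcal T_h}=\|P_{\mathcal J_h}\tau_h\|_H^2 .
\]
Hence $P_{\mathcal J_h}\tau_h=0$, which is \jumpref{item: Projection}.
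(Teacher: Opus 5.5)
Your main argument is correct and is exactly the paper's proof: you replace an arbitrary $\mu\in\mathcal K^\star\Lambda^2(\mathcal T_h)$ by $P_{K_h^\star}\mu$ via \Cref{lem: Projection right}, conclude that $\tau_h$ pairs to zero with all of $\mathcal K^\star\Lambda^2(\mathcal T_h)$, and invoke \Cref{prop: jumps polynomial}. Your alternative route is also valid and bypasses the projection lemma: test with $\mu_h=d^{\mathcal T_h}P_{\mathcal J_h}\tau_h\in K_h^\star$, use $\tau_h-P_{\mathcal J_h}\tau_h\in H\Lambda^1(M)$ to pass to $P_{\mathcal J_h}\tau_h$, and then \Cref{prop: Isometries} gives $\|P_{\mathcal J_h}\tau_h\|_H^2=0$.
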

\begin{proof}
    Suppose
    \[
        \ljump \tau_h , \mu_h \rjump_{\mathcal T_h} = 0 
    \]
    for all $\mu_h \in K_h^\star$. Then for $\mu \in \mathcal K^\star\Lambda^2(\mathcal T_h)$ we have
     \[
        \ljump \tau_h , \mu\rjump_{\mathcal T_h}  = \ljump \tau_h , P_{K_h^\star}\mu\rjump_{\mathcal T_h}  = 0 
    \]
    and therefore $\tau_h \in \mathcal P_r^-\Lambda^1(\mathcal T_h)$ by \Cref{prop: jumps polynomial}. 
\end{proof}

\begin{lemma}\label{lem: Projection left}
    There exists a map $P_{J_h}\colon H\Lambda^1(\mathcal T_h) \to J_h$ such that
    \[
 \ljump P_{J_h} \tau - \tau, \mu_h \rjump_{\mathcal T_h} =0
    \]
    for all $\tau \in H\Lambda^1(\mathcal T_h)$ and $\mu_h \in K_h^\star$.
\end{lemma}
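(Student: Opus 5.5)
The idea is to mirror \Cref{lem: Projection right} on the other side of the jump pairing, using the finite-dimensional isomorphisms \eqref{eq: isomorphisms V_h J_h B_h} in place of \Cref{cor: isometry}. The key object is the bilinear form
\[
\ljump \cdot , \cdot \rjump_{\mathcal T_h} \colon J_h \times K_h^\star \to \mathbb R .
\]
I will first show that it is non-degenerate. Since $J_h$ and $K_h^\star$ are finite-dimensional, every linear functional on $K_h^\star$ is then represented by a unique element of $J_h$.

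\textbf{Non-degeneracy in the $J_h$ slot.} Suppose $\tau_h\in J_h$ and $\ljump \tau_h,\mu_h\rjump_{\mathcal T_h}=0$ for all $\mu_h\in K_h^\star$. Then \Cref{cor: discrete kernel} gives $\tau_h\in\mathcal P_r^-\Lambda^1(\mathcal T_h)$. But $J_h$ is by definition the orthogonal complement of this space in $D\mathcal P_r^-\Lambda^1(\mathcal T_h)$, so $\tau_h=0$.

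\textbf{Dimension count.} We have $\dim J_h=\dim\mathcal B_h$ by \eqref{eq: isomorphisms V_h J_h B_h}. Also $K_h^\star=d^{\mathcal T_h}\mathcal B_h$ is an isometric image of $\mathcal B_h$ by \Cref{prop: Isometries}, so $\dim K_h^\star=\dim \mathcal B_h$. Injectivity of $J_h\to (K_h^\star)'$ together with equal dimensions gives an isomorphism.

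Alternatively, one can use the identity $\langle Q_h\tau_h,R_h\mu_h\rangle_V=\ljump\tau_h,\mu_h\rjump_{\mathcal T_h}$ and the fact that $Q_h\colon J_h\to V_h$ and $R_h\colon K_h^\star\to V_h$ are isomorphisms. This turns the pairing into the nondegenerate $V$-inner product on $V_h$.

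\textbf{Definition of $P_{J_h}$.} For $\tau\in H\Lambda^1(\mathcal T_h)$, the map $\mu_h\mapsto\ljump\tau,\mu_h\rjump_{\mathcal T_h}$ is a linear functional on $K_h^\star$; it is bounded because the jump form is continuous. By the isomorphism above there is a unique $P_{J_h}\tau\in J_h$ with
\[
\ljump P_{J_h}\tau,\mu_h\rjump_{\mathcal T_h}=\ljump \tau,\mu_h\rjump_{\mathcal T_h}\quad \forall \mu_h\in K_h^\star .
\]
Concretely, $P_{J_h}\tau=Q_h^{-1}(w)$, where $w\in V_h$ is the Riesz representative of $v_h\mapsto \ljump\tau,R_h^{-1}v_h\rjump_{\mathcal T_h}$. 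Linearity of $P_{J_h}$ follows from uniqueness.

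The main obstacle is the non-degeneracy step. Specifically, one must check that \Cref{cor: discrete kernel} really applies: it needs $\tau_h$ in the discrete space, which holds since $J_h\subset D\mathcal P_r^-\Lambda^1(\mathcal T_h)$. One must also check that the dimension of $K_h^\star$ matches that of $J_h$. If the dimension argument via $\mathcal B_h$ turns out to be awkward, I would fall back on the $R_h$/$Q_h$ route, which avoids the dimension count entirely.
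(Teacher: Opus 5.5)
Your proof is correct, but it takes a different route from the paper. The paper constructs $P_{J_h}$ explicitly. It takes the $H$-orthogonal projection $P_{\mathcal B_h}\tau$ of $\tau$ onto $\mathcal B_h$ and transports it to $J_h$ through the isomorphism $P_{\mathcal J_h}|_{J_h}\colon J_h\to\mathcal B_h$ of \eqref{eq: isomorphisms V_h J_h B_h}. The verification is then a one-line computation. It uses the identity $\ljump \tau,\mu_h\rjump_{\mathcal T_h}=-\langle\tau,\delta\mu_h\rangle_H$, which follows from \Cref{prop: Isometries} and the $H$-orthogonality of $\mathcal J\Lambda^1(\mathcal T_h)$ and $H\Lambda^1(M)$. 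It also uses that $\delta\mu_h\in\mathcal B_h$ for $\mu_h=d^{\mathcal T_h}b\in K_h^\star$, since $\delta d^{\mathcal T_h}b=-b$. Together these let $\tau$ be replaced by $P_{\mathcal B_h}\tau$ in the pairing.

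You instead argue by finite-dimensional duality: injectivity of $J_h\to(K_h^\star)'$ via \Cref{cor: discrete kernel}, plus $\dim J_h=\dim\mathcal B_h=\dim K_h^\star$. Both steps are sound, and there is no circularity, since \Cref{cor: discrete kernel} rests only on \Cref{lem: Projection right}.

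What each approach buys:
\begin{itemize}
\item Your version gives uniqueness of $P_{J_h}\tau$, and hence linearity, for free. It also shows that any map with the stated property coincides with the paper's.
\item The paper's version needs no dimension count. It also makes the operator norm visible: $\|P_{J_h}\|\le\|(P_{\mathcal J_h}|_{J_h})^{-1}\|$ because $\|P_{\mathcal B_h}\|\le 1$. This is exactly the quantity that controls $\beta_h\ge 1/\|P_{J_h}\|$ in \Cref{lem: inf-sup finite element}.
\end{itemize}

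One small correction to your fallback. The paper only asserts that $R_h$ is an isomorphism. Justifying that requires non-degeneracy in the $K_h^\star$ slot plus the same dimension count, so the $R_h$/$Q_h$ route does not actually avoid the count. Your primary argument does not depend on it.
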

\begin{proof}
    Let $P_{\mathcal B_h}\tau$ denote the orthogonal projection of $\tau$ onto $\mathcal B_h$. Then we can use the isomorphism \eqref{eq: isomorphisms V_h J_h B_h} to find the element $P_{J_h}\tau$ corresponding to $P_{\mathcal B_h}\tau$. By orthogonality, we have
    \[
        \ljump \tau, \mu_h \rjump_{\mathcal T_h} = -\langle \tau , \delta \mu_h \rangle_H  = -\langle P_{\mathcal B_h}\tau , \delta \mu_h \rangle _H =\ljump P_{J_h} \tau , \mu_h \rjump_{\mathcal T_h}
    \]
\end{proof}
\subsection{Inf-sup inequality.}
\begin{lemma}\label{lem: inf-sup finite element}
    We have the inf-sup inequality
    \[
     \beta_h=  \inf_{\substack{\lambda_h \neq 0\\ \lambda_h \in K_h^\star }}\sup_{\substack{\tau_h \neq 0 \\ \tau_h \in D \mathcal  P_r^-\Lambda^1(\mathcal T_h)}}\frac{|\ljump \tau_h,\lambda_h \rjump_{\mathcal T_h}| }{\| \lambda_h\|_{H^\star} \|\tau_h\|_H} >0
    \]
\end{lemma}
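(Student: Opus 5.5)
Since all spaces involved are finite-dimensional, the plan is to show that for every nonzero $\lambda_h\in K_h^\star$ there is a $\tau_h\in D\mathcal P_r^-\Lambda^1(\mathcal T_h)$ with $\ljump \tau_h,\lambda_h\rjump_{\mathcal T_h}\neq 0$. A compactness argument then gives a strictly positive infimum. It is cleaner to produce an explicit lower bound in terms of the isomorphism $P_{\mathcal J_h}|_{J_h}\colon J_h\to\mathcal B_h$ from \eqref{eq: isomorphisms V_h J_h B_h}, so I will do that. No uniformity in $h$ is claimed, and I would not try to obtain one here.

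\textbf{Step 1: choose the test form.} Let $\lambda_h\in K_h^\star$ be nonzero. By definition $K_h^\star=d^{\mathcal T_h}\mathcal B_h$, so $\lambda_h=d^{\mathcal T_h}b$ for some $b\in\mathcal B_h\subset\mathcal J\Lambda^1(\mathcal T_h)$. By \eqref{eq: isomorphisms V_h J_h B_h} there is a unique $\tau_h\in J_h$ with $P_{\mathcal J_h}\tau_h=b$. Since $d^{\mathcal T_h}\colon\mathcal J\Lambda^1(\mathcal T_h)\to\mathcal K^\star\Lambda^2(\mathcal T_h)$ is an isometry (\Cref{prop: Isometries}), we have $\|\lambda_h\|_{H^\star}=\|b\|_H$.

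\textbf{Step 2: evaluate the jump.} The difference $\tau_h-P_{\mathcal J_h}\tau_h$ lies in $H\Lambda^1(M)$. By \Cref{lem: nojump} (applied on $\mathcal T_h$), its jump against any $\mu\in H^\star\Lambda^2(M)$ vanishes. Hence
\[
\ljump\tau_h,\lambda_h\rjump_{\mathcal T_h}=\ljump b,d^{\mathcal T_h}b\rjump_{\mathcal T_h}=\|b\|_H^2=\|\lambda_h\|_{H^\star}^2,
\]
where the middle equality is the identity $\ljump \lambda,d^{\mathcal T}\mu\rjump=\langle\lambda,\mu\rangle_H$ for $\lambda,\mu\in\mathcal J\Lambda^1$ from \Cref{prop: Isometries}, applied with $\mathcal T_h$ in place of $\mathcal T$.

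\textbf{Step 3: bound the quotient.} Using Step 2,
\[
\frac{|\ljump\tau_h,\lambda_h\rjump_{\mathcal T_h}|}{\|\lambda_h\|_{H^\star}\|\tau_h\|_H}=\frac{\|P_{\mathcal J_h}\tau_h\|_H}{\|\tau_h\|_H}\ \ge\ c_h:=\min_{\substack{\sigma\in J_h\\ \|\sigma\|_H=1}}\|P_{\mathcal J_h}\sigma\|_H .
\]
The constant $c_h$ is positive because $P_{\mathcal J_h}|_{J_h}$ is injective: its kernel in $D\mathcal P_r^-\Lambda^1(\mathcal T_h)$ is $\mathcal P_r^-\Lambda^1(\mathcal T_h)$, which meets $J_h$ only in $0$ since $J_h$ is its orthogonal complement. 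Moreover $J_h$ is finite-dimensional, so the unit sphere is compact and the minimum of the continuous positive function is attained. Taking the supremum over $\tau_h$ and then the infimum over $\lambda_h$ gives $\beta_h\ge c_h>0$.

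The main point requiring care is Step 2. One must make sure that the identities of \Cref{prop: Isometries} and the kernel characterization of \Cref{lem: nojump} are used for the refined triangulation $\mathcal T_h$, and that $b$ really lies in $\mathcal J\Lambda^1(\mathcal T_h)$. This holds because $\mathcal B_h$ is by definition the image of the orthogonal projection $P_{\mathcal J_h}$.
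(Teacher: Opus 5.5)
Your proof is correct, and it reaches the same constant as the paper by a somewhat more direct route.

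The paper proceeds in three steps:
\begin{enumerate}
\item It reduces the supremum over $D\mathcal P_r^-\Lambda^1(\mathcal T_h)$ to one over $J_h$ by splitting $\tau_h=\widetilde\tau_h+\sigma_h$.
\item It uses the Fortin-type map $P_{J_h}\colon H\Lambda^1(\mathcal T_h)\to J_h$ of \Cref{lem: Projection left} to compare with the supremum over all of $H\Lambda^1(\mathcal T_h)$.
\item That supremum equals $\|\lambda_h\|_{H^\star}$ by \Cref{cor: isometry}, which yields $\beta_h\ge 1/\|P_{J_h}\|$.
\end{enumerate}

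You bypass both the splitting and the infinite-dimensional supremum. You take the explicit continuous supremizer $b=-\delta\lambda_h\in\mathcal B_h$ and lift it by $(P_{\mathcal J_h}|_{J_h})^{-1}$, which is exactly $P_{J_h}b$. You then evaluate the jump in closed form via $\ljump b,d^{\mathcal T_h}b\rjump_{\mathcal T_h}=\|b\|_H^2$ from \Cref{prop: Isometries}.

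The two lower bounds actually coincide. We have $P_{J_h}=(P_{\mathcal J_h}|_{J_h})^{-1}\circ P_{\mathcal B_h}$, where $P_{\mathcal B_h}$ is an $H$-orthogonal projection, so your $c_h$ equals $1/\|P_{J_h}\|$.

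Your version is shorter and self-contained, since it does not need \Cref{lem: Projection left}. The paper's Fortin-style formulation is the standard template for mixed methods: there, $h$-uniform stability would follow directly from an $h$-uniform bound on the Fortin operator. Neither argument supplies such a bound, which is consistent with your remark that no uniformity is claimed.
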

\begin{proof}
    $J_h \subset D\mathcal P_r^-\Lambda^1(\mathcal T_h)$ is the orthogonal complement of the left kernel of $\ljump\cdot, \cdot \rjump$.
    
    Thus, for $\lambda_h \in K_h^\star$ and $\tau_h\in D\mathcal P_r^-\Lambda^1(\mathcal T_h)$, we can write $\tau_h= \widetilde{\tau}_h + \sigma_h$, where $\widetilde{\tau}_h\in J_h$ and $\sigma_h\in \mathcal P_r^-\Lambda^1(\mathcal T_h)$.
    Then 
    \begin{equation}
        \label{eq: lemma57ineq}
    \frac{|\ljump \tau_h,\lambda_h \rjump_{\mathcal T_h}| }{ \|\tau_h\|_H}= \frac{|\ljump \widetilde{\tau}_h, \lambda_h\rjump_{\mathcal T_h}|}{\sqrt{\|\widetilde{\tau}_h\|_H^2+ \|\sigma_h\|_H^2}}\le \frac{|\ljump \widetilde{\tau}_h,\lambda_h \rjump_{\mathcal T_h}| }{ \|\widetilde{\tau}_h\|_H}
    \end{equation}

    For every $\lambda_h \in K_h^\star$ we have that
    \begin{equation}
    \begin{aligned}
     \sup_{\substack{\tau_h \neq 0 \\ \tau_h \in D\mathcal P_r^-\Lambda^1(\mathcal T_h)}}\frac{|\ljump \tau_h,\lambda_h \rjump_{\mathcal T_h} |}{\| \lambda_h\|_{H^\star} \|\tau_h\|_H}
      = &\sup_{\substack{\tau_h \neq 0 \\ \tau_h \in J_h}}\frac{|\ljump \tau_h,\lambda_h \rjump_{\mathcal T_h} |}{\| \lambda_h\|_{H^\star} \|\tau_h\|_H}
      \\ 
     = &\sup_{\substack{P_{J_h}\tau \neq 0 \\ \tau \in H\Lambda^1(\mathcal T_h)}}\frac{|\ljump P_{J_h}\tau,\lambda_h \rjump_{\mathcal T_h}| }{\|\lambda_h\|_{H^\star} \|P_{J_h}\tau\|_H} \\  \geq   & 
      \sup_{\substack{P_{J_h}\tau \neq 0 \\ \tau \in H\Lambda^1(\mathcal T_h)}}\frac{|\ljump P_{J_h}\tau,\lambda_h \rjump_{\mathcal T_h}| }{\|\lambda_h \|_{H^\star} \| P_{J_h}\|\|\tau\|_H} 
      \\ = & 
     \sup_{\substack{\tau \neq 0 \\ \tau \in H\Lambda^1(\mathcal T_h)}}\frac{|\ljump \tau,\lambda_h \rjump_{\mathcal T_h}| }{\|\lambda_h \|_{H^\star} \| P_{J_h}\|\|\tau\|_H} 
      \\ =& \frac 1 {\|P_{J_h}\|}
    \end{aligned}
    \end{equation}
    where the first equality follows from \eqref{eq: lemma57ineq}, the equality after the inequality from \Cref{lem: Projection left}, and the final equality from \Cref{cor: isometry}.
    The inf-sup inequality is therefore satisfied, with $\beta_h \geq \frac 1 {\| P_{J_h}\|}>0$.
\end{proof}

\section{Well-posedness of the finite element problem}

For the analysis of the finite element method, it is convenient to always consider possible refinements to a triangulation. By \Cref{prop: saddle point T_h}, solutions $(\omega, p , \lambda)$ of \eqref{eq: variational problem} can always be obtained from the solution $(\omega^h, p^h,\lambda^h)$ of \eqref{eq: Saddle-point problem T_h} with a refinement $\mathcal T_h$ of $\mathcal T$. We will therefore approximate \eqref{eq: Saddle-point problem T_h} with finite elements.
\subsection{Finite element problem.}
For the finite element approximation of our problem, we use the subspaces
\begin{align*}
    D\mathcal P_r ^-\Lambda^1(\mathcal T_h) \subset H\Lambda^1(\mathcal T_h) && \mathcal P_r^-\Lambda^0(\mathcal T_h) \subset H\Lambda ^0 (M) && K_h^\star \subset \mathcal  K^\star\Lambda^2(\mathcal T_h)
\end{align*}
where ``$D$'' means the discontinuous/broken version.

\begin{thm}
    \label{thm: altformulation}
    For every pair of bounded functionals $F\colon H\Lambda^0(M)\to\mathbb R$ and $G_h\colon K_h^\star\to\mathbb R$, the problem of finding $(\omega_h,p_h,\lambda_h)\in D\mathcal P_r^-\Lambda^1(\mathcal T_h)\times\mathcal P_r^-\Lambda^0(\mathcal T_h)\times K_h^\star$ such that
    \begin{equation*}
    \begin{aligned}
        A_h(\omega_h, p_h \mid \tau_h , q_h) +  B_h(\tau_h \mid   \lambda_h) = & 
        F(q_h)  & &\forall (\tau_h ,q_h ) \in D\mathcal P_r^-\Lambda^1(\mathcal T_h) \oplus   \mathrlap{ \mathcal P_r^-\Lambda^0(\mathcal T_h)}\\
        B_h(\omega_h \mid \mu_h ) =&  G_h(\mu_h) &&\forall \mu_h \in \mathrlap{ K_h^\star } 
    \end{aligned}
    \label{eq: altformulation}
    \end{equation*}
    is well-posed. 
\end{thm}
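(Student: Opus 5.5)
We apply the discrete version of the saddle-point theorem \parencite[Theorem~4.2.3]{boffi2013mixed}. The discrete problem is a finite-dimensional saddle-point problem with $A_h$ on $X_h\coloneqq D\mathcal P_r^-\Lambda^1(\mathcal T_h)\oplus\mathcal P_r^-\Lambda^0(\mathcal T_h)$ and $B_h$ on $D\mathcal P_r^-\Lambda^1(\mathcal T_h)\times K_h^\star$. We therefore verify two conditions: an inf-sup condition for $A_h$ on the discrete kernel of $B_h$, and the inf-sup condition for $B_h$. The second condition is exactly \Cref{lem: inf-sup finite element}, which gives $\beta_h\ge 1/\|P_{J_h}\|>0$. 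Boundedness of $A_h$, $B_h$, $F$ and $G_h$ is immediate from Cauchy--Schwarz and the continuity of the jump form.

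For the first condition, we identify the discrete kernel. By \Cref{cor: discrete kernel}, if $\tau_h\in D\mathcal P_r^-\Lambda^1(\mathcal T_h)$ satisfies $\ljump\tau_h,\mu_h\rjump_{\mathcal T_h}=0$ for all $\mu_h\in K_h^\star$, then $\tau_h\in\mathcal P_r^-\Lambda^1(\mathcal T_h)$. The converse holds because conforming forms have no jumps. Hence
\[
\ker B_h=\mathcal P_r^-\Lambda^1(\mathcal T_h)\oplus\mathcal P_r^-\Lambda^0(\mathcal T_h).
\]
On this space $d^{\mathcal T_h}=d$, so $A_h$ restricted to $\ker B_h$ is precisely the standard FEEC mixed Hodge Laplacian bilinear form for $1$-forms, in the pair $\mathcal P_r^-\Lambda^0(\mathcal T_h)\xrightarrow{d}\mathcal P_r^-\Lambda^1(\mathcal T_h)$, on the closed manifold $M$. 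Because $\harmonic\Lambda^1(M)=0$, no discrete harmonic variable is needed. This relies on the discrete harmonic forms $\harmonic_h^1$ also vanishing, which holds because the $\mathcal P_r^-$ complex admits bounded cochain projections, so $\dim\harmonic^1_h=\dim\harmonic\Lambda^1(M)=0$. The discrete stability result of \parencite[Theorem~5.5 / Section~5]{arnold2018finite} for subcomplexes with bounded cochain projections then gives an inf-sup constant for $A_h$ on $\ker B_h$. This constant depends only on the continuous Poincaré constant and the norm of the cochain projections.

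Combining the two conditions via Brezzi's theorem yields existence, uniqueness and a stability bound, so the discrete problem is well-posed. The main obstacle is the kernel step. We must make sure the discrete kernel is exactly the conforming FEEC space rather than something larger, which is exactly what \Cref{cor: discrete kernel} delivers via the projection $P_{K_h^\star}$. We must also justify applying the FEEC discrete Hodge-Laplacian stability on a triangulated manifold, using the pushed-forward $\mathcal P_r^-$ spaces and their bounded cochain projections. If uniformity in $h$ is desired, $\|P_{J_h}\|$ must also be bounded independently of $h$, but for mere well-posedness positivity for each fixed $h$ suffices. In finite dimensions, positivity of $\beta_h$ together with injectivity of $A_h$ on $\ker B_h$ already suffices.
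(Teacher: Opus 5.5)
Your proposal is correct and follows the same route as the paper. You obtain an inf-sup constant for $A_h$ on the conforming discrete kernel from the FEEC discrete Hodge Laplacian stability, with the discrete harmonic $1$-forms vanishing thanks to the bounded cochain projections; you obtain the inf-sup for $B_h$ from \Cref{lem: inf-sup finite element}; and you combine the two in Brezzi's theorem. You are more explicit than the paper in identifying $\ker B_h$ via \Cref{cor: discrete kernel}, and note that $A_h$ coincides with the standard mixed form only after flipping the sign of the $0$-form test function, which leaves the inf-sup constant unchanged.
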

\begin{proof}
    The standard stability result for the discrete Hodge Laplacian \parencite[Theorem~7.3]{arnold2006finite}, specialized to $k=1$, gives an inf-sup constant $\alpha>0$ for $A_h$ that is independent of $h$. Indeed, the assumption $\harmonic\Lambda^1(M)=0$ from \Cref{sec: Mixed formulation}, together with the bounded commuting projections for the FEEC subcomplex, implies that the discrete harmonic space is trivial. The harmonic variable in the cited theorem therefore disappears, and a change of signs in the $0$-form variable and test function gives $A_h$.

    We now argue as in the proof of \Cref{thm: Well posed A + B}, using \Cref{lem: inf-sup finite element} for the inf-sup condition on $B_h$. The two inf-sup conditions give the claimed well-posedness.
\end{proof}

\subsection{Basic error estimate.}
We fix $\mathcal T$ as the coarsest triangulation and consider refinements $\mathcal T_h$. The right-hand side $F$ is unchanged under refinement, whereas the jump functional used by the finite element problem is obtained by interpolating $\gamma$. More precisely, let
\[
    \Pi_T^h \colon C^\infty\Lambda^k(T) \longrightarrow \mathcal P_r^-\Lambda^k(T)
\]
denote the canonical interpolator on a cell $T\in\mathcal T_h$. Applying it cellwise defines the broken interpolator
\[
    \Pi_{\mathcal T}^h \colon C^\infty\Lambda^k(\mathcal T)
    \longrightarrow D\mathcal P_r^-\Lambda^k(\mathcal T_h).
\]
If $\tau$ is globally conforming, its canonical degrees of freedom agree across facets, and hence $\Pi_{\mathcal T}^h\tau\in\mathcal P_r^-\Lambda^k(\mathcal T_h)$. In particular, this applies to the scalar interpolant $\Pi_{\mathcal T}^h p$ used below.
For sufficiently regular $\tau$, the standard interpolation estimate is
\[
    \|\Pi_{\mathcal T}^h\tau-\tau\|_{H\Lambda^k(\mathcal T)}
    \leq Ch^r\left(
    |\tau|_{H^r\Lambda^k(\mathcal T)}
    +|d^{\mathcal T}\tau|_{H^r\Lambda^{k+1}(\mathcal T)}
    \right).
\]
With this notation, let
\[
    G_h,G\colon\mathcal K^\star\Lambda^2(\mathcal T_h)\longrightarrow\mathbb R
\]
be given by
\[
    G_h(\mu)=\ljump \Pi_{\mathcal T}^h\gamma,\mu\rjump_{\mathcal T_h},
    \qquad
    G(\mu)=\ljump \gamma,\mu\rjump_{\mathcal T_h}.
\]
Let $(\omega^h,p^h,\lambda^h)$ be the solution of the refined infinite-dimensional problem whose jump functional is $G_h$. By \Cref{prop: saddle point T_h}, the refined problem with jump functional $G$ has the same primal variables $(\omega,p)$ as the original problem; we denote its multiplier by $\widetilde\lambda^h$. We assume that $\gamma$ is regular enough for the canonical interpolator and the estimates below.

\begin{lemma}\label{lem: stability projection}
    Let $(\omega^h,p^h,\lambda^h)$ and $(\omega,p,\widetilde\lambda^h)$ be the refined infinite-dimensional solutions described above. Then
    \[
        \|\omega^h-\omega\|_H+\|p^h-p\|_H
        \leq C\|\Pi_{\mathcal T}^h\gamma-\gamma\|_H,
    \]
    where $C$ is independent of $h$.
\end{lemma}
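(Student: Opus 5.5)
Subtract the two refined infinite-dimensional problems and use the uniform stability of the refined saddle-point problem. Both $(\omega^h,p^h,\lambda^h)$ and $(\omega,p,\widetilde\lambda^h)$ solve \eqref{eq: Saddle-point problem T_h} on $H\Lambda^1(\mathcal T_h)\oplus H\Lambda^0(M)\oplus\mathcal K^\star\Lambda^2(\mathcal T_h)$. They share the first right-hand side $F(q)$ and differ only in the jump functional, $G_h$ versus $G$. The difference $(e_\omega,e_p,e_\lambda)=(\omega^h-\omega,\,p^h-p,\,\lambda^h-\widetilde\lambda^h)$ therefore solves
\[
A_h(e_\omega,e_p\mid\tau,q)+B_h(\tau\mid e_\lambda)=0,\qquad B_h(e_\omega\mid\mu)=\ljump \Pi_{\mathcal T}^h\gamma-\gamma,\mu\rjump_{\mathcal T_h}
\]
for all test functions. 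By the Remark after \Cref{thm: Well posed A + B} this problem is well-posed, so the error is bounded by a constant times the dual norm of the data $\mu\mapsto\ljump \Pi_{\mathcal T}^h\gamma-\gamma,\mu\rjump_{\mathcal T_h}$. The continuity lemma for the jump form bounds this dual norm by $\|\Pi_{\mathcal T}^h\gamma-\gamma\|_{H\Lambda(\mathcal T_h)}$, and this norm equals the $H$-norm on $\mathcal T$ because $d^{\mathcal T_h}=d^{\mathcal T}$ on $H\Lambda^1(\mathcal T)$ (\Cref{lem: Refining does not add jumps}).

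The main obstacle is that the stability constant must not depend on $h$. Boffi--Brezzi--Fortin bounds it in terms of the inf-sup constant of $A_h$ on $\ker B_h$, the inf-sup constant of $B_h$, and the continuity constants, so I will check each for $\mathcal T_h$.

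\emph{Continuity.} $A_h$ and $B_h$ have constant $1$ up to absolute factors, by Cauchy--Schwarz and the jump continuity lemma applied on $\mathcal T_h$.

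\emph{Inf-sup for $B_h$.} Exactly as in the proof of \Cref{thm: Well posed A + B}, \Cref{cor: isometry} applied to $\mathcal T_h$ gives constant $\ge 1$, independently of $h$.

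\emph{Inf-sup for $A_h$ on the kernel.} By \Cref{lem: nojump K} applied on $\mathcal T_h$, $\ker B_h=H\Lambda^1(M)\oplus H\Lambda^0(M)$. On this space $A_h$ coincides with $A$, because $d^{\mathcal T_h}=d$ on $H\Lambda^1(M)$. Its inf-sup constant is therefore the mesh-independent one from \parencite[Theorem~4.9]{arnold2018finite}, which uses $\harmonic\Lambda^1(M)=0$.

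Together these give a constant $C$ independent of $h$.

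Finally, I keep only the primal part of the stability estimate, $\|e_\omega\|_H+\|e_p\|_H\le C\|\Pi_{\mathcal T}^h\gamma-\gamma\|_H$, which is the claim. I must also make sure that $(\omega,p)$ really is the primal part of the $G$-problem on $\mathcal T_h$; this is the content of \Cref{prop: saddle point T_h}, which I take as given.
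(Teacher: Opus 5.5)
Your proposal is correct and follows the paper's proof. Like the paper, you subtract the two refined problems, bound the dual norm of $\mu\mapsto\ljump \Pi_{\mathcal T}^h\gamma-\gamma,\mu\rjump_{\mathcal T_h}$ by $\|\Pi_{\mathcal T}^h\gamma-\gamma\|_H$, and apply the abstract saddle-point stability theorem on $\mathcal T_h$. Your explicit check that the constants are mesh-independent — the continuity constants, the inf-sup constant of $B_h$ (at least $1$ by \Cref{cor: isometry}), and the inf-sup constant of $A_h$ on $\ker B_h=H\Lambda^1(M)\oplus H\Lambda^0(M)$ — is exactly what the paper's one-line appeal to that theorem leaves implicit.
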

\begin{proof}
    Subtracting the two saddle-point problems shows that the differences solve
     \begin{equation*}
        \begin{aligned}
        A_h(\omega^h - \omega, p^h - p \mid \tau , q) +B_h(\tau \mid \lambda^h - \widetilde\lambda^h) = &
        0 &\forallspace &\forall (\tau, q) \in H\Lambda^1(\mathcal T_h)\mathrlap{\oplus H\Lambda^0(M)} \\
        B_h(\omega^h - \omega \mid \mu ) =& \ljump \Pi_{\mathcal T}^h \gamma - \gamma, \mu \rjump_{\mathcal T_h} &\forallspace& \forall \mu \in \mathcal K^\star\Lambda^2(\mathcal T_h)
        \end{aligned}
    \end{equation*}
    Moreover,
    \[
        \sup_{\substack{\mu\in\mathcal K^\star\Lambda^2(\mathcal T_h)\\
        \|\mu\|_{H^\star}=1}}
        \ljump \Pi_{\mathcal T}^h\gamma-\gamma,\mu\rjump_{\mathcal T_h}
        \leq \|\Pi_{\mathcal T}^h\gamma-\gamma\|_H.
    \]
    The stability estimate now follows by applying the abstract saddle-point theorem on $\mathcal T_h$ \parencite[Theorem~4.2.3]{boffi2013mixed}.
\end{proof}
The preceding stability result and standard mixed-method theory \parencite{boffi2013mixed} yield the following estimate.

\begin{thm}\label{thm: basic error estimate}
Let $(\omega^h,p^h,\lambda^h)\in H\Lambda^1(\mathcal T_h)\times H\Lambda^0(M)\times\mathcal K^\star\Lambda^2(\mathcal T_h)$ and
$(\omega_h,p_h,\lambda_h)\in D\mathcal P_r^-\Lambda^1(\mathcal T_h)\times\mathcal P_r^-\Lambda^0(\mathcal T_h)\times K_h^\star$ be the solutions of the infinite-dimensional and finite-dimensional saddle-point problems with the same jump functional $G_h$, respectively. Then
\begin{equation*}
\begin{aligned}
\| \omega^h - \omega_h \|_H + \| p^h - p_h \|_H \leq   &  C \left \{
\inf_{{\omega_I \in Z_h(\omega^h)}}\| \omega^h - \omega_I\|_H + \inf_{p_I\in \mathcal P_r^- \Lambda^0(\mathcal T_h)}\| p^h - p_I\| _H \right \} 
\end{aligned}
\end{equation*}
where
\[
    Z_h(\omega^h)=\left\{\omega_I\in D\mathcal P_r^-\Lambda^1(\mathcal T_h):
    \ljump\omega_I-\omega^h,\mu_h\rjump_{\mathcal T_h}=0
    \text{ for all }\mu_h\in K_h^\star\right\},
\]
and $C$ is independent of $h$.
\end{thm}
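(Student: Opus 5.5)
This is the standard quasi-optimality estimate for conforming Galerkin approximation of a saddle-point problem. I would follow the Strang-type argument in \parencite[Theorem~5.2.5]{boffi2013mixed}. The main tool is the discrete stability from \Cref{thm: altformulation}: the inf-sup constant $\alpha$ of $A_h$ is independent of $h$, and $B_h$ satisfies the inf-sup condition of \Cref{lem: inf-sup finite element}.

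\textbf{Step 1: Galerkin orthogonality.} The discrete spaces $D\mathcal P_r^-\Lambda^1(\mathcal T_h)$, $\mathcal P_r^-\Lambda^0(\mathcal T_h)$ and $K_h^\star$ are subspaces of the continuous ones. Subtracting the discrete equations from the infinite-dimensional ones, tested only against discrete functions, gives
\[
A_h(\omega^h-\omega_h,p^h-p_h\mid\tau_h,q_h)+B_h(\tau_h\mid\lambda^h-\lambda_h)=0,
\qquad
B_h(\omega^h-\omega_h\mid\mu_h)=0 .
\]
This holds for all discrete $(\tau_h,q_h,\mu_h)$, because both problems use the same $F$ and the same $G_h$.

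\textbf{Step 2: splitting the error.} Fix any $\omega_I\in Z_h(\omega^h)$ and any $p_I\in\mathcal P_r^-\Lambda^0(\mathcal T_h)$. Then $\omega_I-\omega_h$ lies in the discrete kernel $\{\tau_h:\ljump\tau_h,\mu_h\rjump_{\mathcal T_h}=0\ \forall\mu_h\in K_h^\star\}$. By \Cref{cor: discrete kernel} this kernel equals $\mathcal P_r^-\Lambda^1(\mathcal T_h)$. On this kernel, paired with $\mathcal P_r^-\Lambda^0(\mathcal T_h)$, the form $A_h$ is the discrete Hodge-Laplacian form. Its inf-sup constant is $\alpha$, independent of $h$, by the argument in \Cref{thm: altformulation}.

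\textbf{Step 3: bounding the discrete part.} Take test functions $(\tau_h,q_h)$ in the discrete kernel. Then $B_h(\tau_h\mid\lambda^h-\lambda_h)$ vanishes, provided $\lambda^h$ can be replaced by its projection $P_{K_h^\star}\lambda^h$. This replacement is allowed by \Cref{lem: Projection right}: for all $\tau_h\in D\mathcal P_r^-\Lambda^1(\mathcal T_h)$,
\[
\ljump\tau_h,\lambda^h\rjump_{\mathcal T_h}=\ljump\tau_h,P_{K_h^\star}\lambda^h\rjump_{\mathcal T_h},
\]
so the multiplier error is invisible on discrete test functions. Since $\tau_h$ is in the discrete kernel, $\ljump\tau_h,P_{K_h^\star}\lambda^h-\lambda_h\rjump_{\mathcal T_h}=0$. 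We then obtain
\[
\alpha\,\|(\omega_I-\omega_h,p_I-p_h)\|\le\sup\frac{A_h(\omega_I-\omega^h,p_I-p^h\mid\tau_h,q_h)}{\|(\tau_h,q_h)\|}\le\|A_h\|\,\|(\omega_I-\omega^h,p_I-p^h)\| .
\]
Because of this projection trick, the multiplier best-approximation error does not appear in the bound, unlike the general estimate.

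\textbf{Step 4: conclusion.} The triangle inequality $\|\omega^h-\omega_h\|_H\le\|\omega^h-\omega_I\|_H+\|\omega_I-\omega_h\|_H$, and the analogous one for $p$, gives the claim with $C=1+\|A_h\|/\alpha$. Here $\|A_h\|\le 2$ uniformly, since $A_h$ is continuous in the $H$-norms.

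The main obstacle is the uniformity of $\alpha$ in $h$ on the discrete kernel. This requires identifying that kernel exactly with the conforming space $\mathcal P_r^-\Lambda^1(\mathcal T_h)$, which is \Cref{cor: discrete kernel}. It also requires bounded commuting projections, so that the discrete harmonic 1-forms are trivial. The constant $\beta_h$ enters only to guarantee existence and does not appear in $C$, because $Z_h(\omega^h)$ is used directly rather than being bounded by the full best approximation.
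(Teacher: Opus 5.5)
Your proof is correct and follows the same route as the paper. The paper invokes the abstract quasi-optimality theorem of Boffi--Brezzi--Fortin with exactly your three ingredients: the $h$-uniform kernel inf-sup for $A_h$ from \Cref{thm: altformulation}, \Cref{lem: inf-sup finite element} for $B_h$, and \Cref{lem: Projection right} to remove the multiplier term. You unpack that theorem by hand, and the only difference is the constant: the paper uses the sharper bound $\|A_h\|/\alpha$ with $\|A_h\|\le 1$ to get $C=\sqrt2/\alpha$, while your $1+\|A_h\|/\alpha$ (with the looser $\|A_h\|\le 2$) is likewise independent of $h$.
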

\begin{proof}
    \todo[inline,disable]{It does not matter much, but we might have to be explicit whether we use $\| (\omega, p)\| = \|\omega \| + \|p\|$ or $\sqrt {\|\omega\|^2 + \|p\|^2}$ to get the sharper constants in the end of the proof}
    We use a standard mixed-method error theorem \parencite[Theorem~5.2.4]{boffi2013mixed}. The requirements to use the theorem are in essence already proved. We use \Cref{thm: altformulation} for the inf-sup condition on $A_h$, \Cref{lem: inf-sup finite element} for the inf-sup condition on $B_h$, and \Cref{lem: Projection right} for the required compatibility of the discrete multiplier space. In terms of the constants $\alpha$ and $\|A_h\|$ that theorem gives
    \begin{equation}
        \big(\|\omega^h-\omega_h\|_H^2+\|p^h-p_h\|_H^2\big)^{1/2}
        \leq\frac{\|A_h\|}{\alpha}\Big\{
        \inf_{\omega_I\in Z_h(\omega^h)}\|\omega^h-\omega_I\|_H^2 + 
        \inf_{p_I\in\mathcal P_r^-\Lambda^0(\mathcal T_h)}\|p^h-p_I\|_H^2
        \Big\}^{1/2}
    \end{equation}
    And since $\| A_h \| \leq 1$, we may therefore take $C=\sqrt 2/\alpha$.\end{proof}
Analogous estimates are available for $\|\lambda^h - \lambda_h\|_{H^\star}$, but the multiplier is not of primary interest here.
\subsection{Constraints on the discrete connection form.}
\begin{lemma}\label{lem: interpolant constraint}
Assume that the solution $\omega$ has the regularity to apply $\Pi_{\mathcal T}^h$. Then $\Pi_{\mathcal T}^h\omega$ belongs to $Z_h(\omega^h)$. Moreover, for every facet $f=T_i\cap T_j\in\mathcal F_h$,
\[
    i_f^\ast\omega_h|_{T_j}-i_f^\ast\omega_h|_{T_i}
    =i_f^\ast\Pi_{\mathcal T}^h\omega|_{T_j}
    -i_f^\ast\Pi_{\mathcal T}^h\omega|_{T_i}.
\]
\end{lemma}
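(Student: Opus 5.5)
The plan is to route everything through the representative $\gamma$ of the prescribed jump and its interpolant $\Pi_{\mathcal T}^h\gamma$. Both claims then reduce to showing that certain differences are conforming, i.e.\ lie in $H\Lambda^1(M)$ or in $\mathcal P_r^-\Lambda^1(\mathcal T_h)$. I assume $\gamma$ and $\omega$ have enough regularity for the canonical interpolator to apply. I use throughout that $\Pi_{\mathcal T}^h$ is linear and that it maps globally conforming forms into $\mathcal P_r^-\Lambda^1(\mathcal T_h)$.

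\emph{Step 1: $\omega-\gamma$ is conforming.} By \Cref{prop: saddle point T_h}, $\omega$ solves the refined problem with jump functional $G$. The second equation gives $\ljump \omega-\gamma,\mu\rjump_{\mathcal T_h}=0$ for all $\mu\in\mathcal K^\star\Lambda^2(\mathcal T_h)$, so \Cref{lem: nojump K} (applied on $\mathcal T_h$) yields $\omega-\gamma\in H\Lambda^1(M)$.

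\emph{Step 2: $\omega^h-\Pi_{\mathcal T}^h\gamma$ is conforming.} The function $\omega^h$ solves the refined problem with jump functional $G_h$. The same argument as in Step 1 gives $\omega^h-\Pi_{\mathcal T}^h\gamma\in H\Lambda^1(M)$.

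\emph{Step 3: $\Pi_{\mathcal T}^h(\omega-\gamma)$ is conforming.} This is the step I expect to need the most care. The form $\omega-\gamma$ is piecewise smooth and lies in $H\Lambda^1(M)$. I will argue that its tangential traces then agree on every facet, hence on every subsimplex $g$ of dimension $\geq 1$. For sufficiently smooth forms this follows from \Cref{lem: nojump} together with the Stokes computation \eqref{eq: Stokes tangent jump }, which lets one localize to a single facet and then pass to its subsimplices. The canonical degrees of freedom of $\mathcal P_r^-\Lambda^1$ are integrals $\int_g i_g^\ast(\cdot)\wedge\varphi$ over such subsimplices. They therefore coincide from both sides, exactly as in \jumpref{item: DOF jump}, so $\Pi_{\mathcal T}^h(\omega-\gamma)\in\mathcal P_r^-\Lambda^1(\mathcal T_h)\subset H\Lambda^1(M)$.

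\emph{Step 4: $\Pi_{\mathcal T}^h\omega\in Z_h(\omega^h)$.} By linearity,
\[
\Pi_{\mathcal T}^h\omega-\omega^h=\Pi_{\mathcal T}^h(\omega-\gamma)+(\Pi_{\mathcal T}^h\gamma-\omega^h).
\]
Both summands lie in $H\Lambda^1(M)$ by Steps 2 and 3. Hence the jump of $\Pi_{\mathcal T}^h\omega-\omega^h$ against every $\mu\in\mathcal K^\star\Lambda^2(\mathcal T_h)$ vanishes, and in particular against every $\mu_h\in K_h^\star\subset\mathcal K^\star\Lambda^2(\mathcal T_h)$. Since $\Pi_{\mathcal T}^h\omega\in D\mathcal P_r^-\Lambda^1(\mathcal T_h)$, this is exactly the statement $\Pi_{\mathcal T}^h\omega\in Z_h(\omega^h)$.

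\emph{Step 5: the trace identity.} The discrete solution satisfies $\ljump\omega_h,\mu_h\rjump_{\mathcal T_h}=G_h(\mu_h)=\ljump\Pi_{\mathcal T}^h\gamma,\mu_h\rjump_{\mathcal T_h}$ for all $\mu_h\in K_h^\star$. Since $\omega_h-\Pi_{\mathcal T}^h\gamma\in D\mathcal P_r^-\Lambda^1(\mathcal T_h)$, \Cref{cor: discrete kernel} gives $\omega_h-\Pi_{\mathcal T}^h\gamma\in\mathcal P_r^-\Lambda^1(\mathcal T_h)$. By \jumpref{item: i*}, its tangential trace jumps then vanish on every facet $f\in\mathcal F_h$. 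Step 3 gives the same conclusion for $\Pi_{\mathcal T}^h\omega-\Pi_{\mathcal T}^h\gamma$. Subtracting the two facetwise identities shows that $\omega_h$ and $\Pi_{\mathcal T}^h\omega$ have the same trace jump $i_f^\ast(\cdot)|_{T_j}-i_f^\ast(\cdot)|_{T_i}$ on each facet, which is the claimed identity.
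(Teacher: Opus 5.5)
Your proposal is correct and follows essentially the paper's argument. You write $\omega=(\omega-\gamma)+\gamma$ with $\omega-\gamma\in H\Lambda^1(M)$, use that $\Pi_{\mathcal T}^h(\omega-\gamma)$ is conforming and that $\omega^h$ (resp.\ $\omega_h$) carries the jump of $\Pi_{\mathcal T}^h\gamma$, and conclude the trace identity via \Cref{cor: discrete kernel} and \Cref{prop: jumps polynomial}. The only differences are cosmetic: you apply the discrete-kernel corollary to $\omega_h-\Pi_{\mathcal T}^h\gamma$ and then subtract, rather than to $\Pi_{\mathcal T}^h\omega-\omega_h$ directly, and you sketch the degree-of-freedom argument for conformity of $\Pi_{\mathcal T}^h(\omega-\gamma)$, which the paper simply asserts.
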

\begin{proof}
    Since $\ljump(\omega-\gamma)^\tangent\rjump=0$, we may write
    $\omega=\alpha+\gamma$ with $\alpha\in H\Lambda^1(M)$. The interpolant $\Pi_{\mathcal T}^h\alpha$ is conforming, so for every $\mu_h\in K_h^\star$,
    \[
        \ljump\Pi_{\mathcal T}^h\omega-\omega^h,\mu_h\rjump_{\mathcal T_h}
        =\ljump\Pi_{\mathcal T}^h(\alpha +\gamma)-\omega^h,\mu_h\rjump_{\mathcal T_h}
        =\ljump\Pi_{\mathcal T}^h\gamma-\omega^h,\mu_h\rjump_{\mathcal T_h}
        =0.
    \]
    Thus $\Pi_{\mathcal T}^h\omega\in Z_h(\omega^h)$. The same calculation with $\omega_h$ in place of $\omega^h$ gives
    \[
        \ljump\Pi_{\mathcal T}^h\omega-\omega_h,\mu_h\rjump_{\mathcal T_h}=0
        \qquad\forall\mu_h\in K_h^\star.
    \]
    The equality of the facet traces follows from \Cref{cor: discrete kernel} and \Cref{prop: jumps polynomial}.
\end{proof}

\subsection{Convergence rates.}
Recall that $(\omega, p, \lambda )$ satisfies \eqref{eq: variational problem}. Then, by considering only test forms with compact support in one simplex $T_i$, we have
\begin{align*}
    \langle \omega , dq \rangle_{L^2}  - \langle p,q \rangle_{L^2}&= 0, 
    & \forall q&\in C_0^\infty\Lambda^0(T_i)\\
    \langle d\omega , d\tau \rangle _{L^2} + \langle dp ,  \tau  \rangle_{L^2}&=0
    & \forall \tau &\in C_0^\infty \Lambda^1(T_i)
\end{align*}
The first equation implies that $\delta\omega_i=p_i\in H\Lambda^0(T_i)$, whereas the second implies that $\delta d\omega_i=-dp_i$. Hence
$d\delta\omega+\delta d\omega=\Delta\omega=0$ weakly in the interior of each $T_i$. Since the Hodge Laplacian is elliptic, interior elliptic regularity implies that $\omega_i$ is smooth, and consequently so is $p_i$. We assume enough piecewise regularity to apply $\Pi_{\mathcal T}^h$.

Combining the interpolation estimate above with standard estimates \parencite[p.~89]{arnold2018finite} gives
\begin{align*}
    \|\omega-\Pi_{\mathcal T}^h\omega\|_H
    &\leq Ch^r\left(
    |\omega|_{H^r\Lambda^1(\mathcal T)}
    +|d^{\mathcal T}\omega|_{H^r\Lambda^2(\mathcal T)}
    \right),\\
    \|p-\Pi_{\mathcal T}^h p\|_H
    &\leq Ch^r|p|_{H^{r+1}\Lambda^0(\mathcal T)}.
\end{align*}
By \Cref{lem: interpolant constraint}, $\Pi_{\mathcal T}^h\omega\in Z_h(\omega^h)$. Applying the triangle inequality, \Cref{thm: basic error estimate}, and then \Cref{lem: stability projection}, we obtain
\begin{align*}
    \|\omega-\omega_h\|_H+\|p-p_h\|_H
    \leq{}& \|\omega-\omega^h\|_H+\|p-p^h\|_H
    +\|\omega^h-\omega_h\|_H+\|p^h-p_h\|_H\\
    \leq{}& C\Bigl(
    \|\Pi_{\mathcal T}^h\gamma-\gamma\|_H
    +\|\omega-\Pi_{\mathcal T}^h\omega\|_H
    +\|p-\Pi_{\mathcal T}^h p\|_H
    \Bigr).
\end{align*}
This proves the following convergence estimate.
\begin{thm}\label{thm: Final convergence}
Let $(\omega,p,\lambda)$ and $(\omega_h,p_h,\lambda_h)$ be the continuous and discrete solutions described above, then
\begin{align*}
\|\omega-\omega_h\|_H+\|p-p_h\|_H
\leq{}& C\Bigl[
\|\Pi_{\mathcal T}^h\gamma-\gamma\|_H
+h^r\Bigl(
|\omega|_{H^r\Lambda^1(\mathcal T)}
+|d^{\mathcal T}\omega|_{H^r\Lambda^2(\mathcal T)}
\\[-0.2em]
&\hspace{11em}
+|p|_{H^{r+1}\Lambda^0(\mathcal T)}
\Bigr)\Bigr]\\
\leq{}& Ch^r\Bigl(
|\omega|_{H^r\Lambda^1(\mathcal T)}
+|d^{\mathcal T}\omega|_{H^r\Lambda^2(\mathcal T)}
\\[-0.2em]
&\hspace{6em}+|p|_{H^{r+1}\Lambda^0(\mathcal T)}
+|\gamma|_{H^r\Lambda^1(\mathcal T)}
+|d^{\mathcal T}\gamma|_{H^r\Lambda^2(\mathcal T)}
\Bigr),
\end{align*}
where the constant $C$ is independent of $h$, provided that $\omega, \gamma$ and $p$ are sufficiently regular. 
\end{thm}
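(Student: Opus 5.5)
The first inequality in \Cref{thm: Final convergence} is exactly the estimate displayed just before the statement, with the interpolation bounds inserted. So the plan is to assemble the pieces already proved and then handle the $\gamma$-term separately.

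\textbf{Step 1: split the error.} I split the total error through the refined infinite-dimensional solution $(\omega^h,p^h,\lambda^h)$, whose jump functional is $G_h$. By \Cref{prop: saddle point T_h}, the refined problem with the exact functional $G$ has primal part $(\omega,p)$. The triangle inequality then gives
\[
\|\omega-\omega_h\|_H+\|p-p_h\|_H \le \bigl(\|\omega-\omega^h\|_H+\|p-p^h\|_H\bigr)+\bigl(\|\omega^h-\omega_h\|_H+\|p^h-p_h\|_H\bigr).
\]

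\textbf{Step 2: bound the two pieces.}
\begin{itemize}
\item The first bracket is bounded by $C\|\Pi_{\mathcal T}^h\gamma-\gamma\|_H$ using \Cref{lem: stability projection}.
\item For the second bracket I apply \Cref{thm: basic error estimate}. In the infimum over $Z_h(\omega^h)$ I insert the admissible choice $\omega_I=\Pi_{\mathcal T}^h\omega$, which is admissible by \Cref{lem: interpolant constraint}. I take $p_I=\Pi_{\mathcal T}^hp$, which is conforming since $p\in H\Lambda^0(M)$.
\item The resulting terms $\|\omega^h-\Pi_{\mathcal T}^h\omega\|_H$ carry $\omega^h$, not $\omega$. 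I split them once more as $\|\omega^h-\omega\|_H+\|\omega-\Pi_{\mathcal T}^h\omega\|_H$, and likewise for $p$. The pieces $\|\omega^h-\omega\|_H$ and $\|p^h-p\|_H$ are again controlled by \Cref{lem: stability projection}.
\end{itemize}
All constants come from \Cref{thm: altformulation} and \Cref{lem: stability projection}. They are independent of $h$ provided the discrete inf-sup constant $\beta_h$ of \Cref{lem: inf-sup finite element} is bounded below uniformly, i.e.\ $\|P_{J_h}\|$ stays bounded.

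\textbf{Step 3: interpolation and the second inequality.} Next I insert the interpolation estimates for $\omega$ and $p$ quoted just before the statement, which yields the first inequality. For the second inequality I bound $\|\Pi_{\mathcal T}^h\gamma-\gamma\|_H$ by the standard broken interpolation estimate
\[
\|\Pi_{\mathcal T}^h\gamma-\gamma\|_H\le Ch^r\bigl(|\gamma|_{H^r\Lambda^1(\mathcal T)}+|d^{\mathcal T}\gamma|_{H^r\Lambda^2(\mathcal T)}\bigr).
\]
This estimate is applied cellwise on each coarse cell $T\in\mathcal T$. It is valid because $\mathcal T_h$ refines $\mathcal T$, so every fine cell lies inside a single coarse cell, where $\gamma$ is assumed smooth. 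The constant depends only on shape regularity of the refinements and on the fixed charts.

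\textbf{Main obstacle.} The delicate point is the $h$-independence of the constant. It requires a uniform lower bound on $\beta_h$, equivalently a uniform bound on $\|P_{J_h}\|$, which \Cref{lem: inf-sup finite element} does not supply by itself. I would first try to obtain it from bounded cochain projections. These should commute with the jump form, because jumps of interpolants are controlled by facet degrees of freedom (\jumpref{item: DOF jump}). If that argument fails, I would state the uniform bound as an explicit hypothesis on the mesh family, so that $C$ in \Cref{thm: basic error estimate} stays uniform.
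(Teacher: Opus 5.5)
Your proposal is correct and follows the paper's argument essentially step for step: the triangle inequality through $(\omega^h,p^h)$, \Cref{lem: stability projection} for the $\omega^h-\omega$ and $p^h-p$ pieces, \Cref{thm: basic error estimate} with $\omega_I=\Pi_{\mathcal T}^h\omega\in Z_h(\omega^h)$ and $p_I=\Pi_{\mathcal T}^h p$, and finally the interpolation estimates for $\omega$, $p$ and $\gamma$. The ``main obstacle'' you raise is not actually an obstacle, and you do not need an extra mesh hypothesis or a uniform bound on $\|P_{J_h}\|$: the discrete kernel is $\mathcal P_r^-\Lambda^1(\mathcal T_h)\subset H\Lambda^1(M)$ (\Cref{cor: discrete kernel}, via \Cref{lem: Projection right}), so the multiplier drops out of the primal error equation, $\beta_h>0$ is needed only for solvability, and the constant in \Cref{thm: basic error estimate} depends only on $\alpha$ and $\|A_h\|$.
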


The preceding estimate also controls the residual in the weak mixed formulation of the normal jump from \Cref{lem: Jump minimize with p}.
\begin{lemma}[Weak normal-jump estimate]
Let $(\omega,p,\lambda)$ and $(\omega_h,p_h,\lambda_h)$ denote the infinite- and finite-dimensional solution triples, respectively. With $F(q)=\ljump q,\phi^\normal\rjump$, we have
\begin{align}
&\sup_{\substack{q\in H\Lambda^0(M)\\ \|q\|_H=1}}
\left|
\langle\omega_h,dq\rangle_{L^2}
-\langle p_h,q\rangle_{L^2}-F(q)
\right| \notag\\
&\qquad={}
\sup_{\substack{q\in H\Lambda^0(M)\\ \|q\|_H=1}}
\left|
\langle\omega-\omega_h,dq\rangle_{L^2}
-\langle p-p_h,q\rangle_{L^2}
\right|
\leq \|\omega-\omega_h\|_{L^2}+\|p-p_h\|_{L^2}.
\end{align}
\end{lemma}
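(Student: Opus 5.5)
The plan is to establish the equality first and then the bound. The equality is simply the first equation \eqref{eq: variational problem1} of the continuous problem, read as an identity for the exact solution. Since $(\omega,p,\lambda)$ solves \eqref{eq: variational problem}, we have
\[
\langle\omega,dq\rangle_{L^2}-\langle p,q\rangle_{L^2}=\ljump q,\phi^\normal\rjump=F(q)
\qquad\forall q\in H\Lambda^0(M).
\]
Subtracting this from the discrete residual $\langle\omega_h,dq\rangle_{L^2}-\langle p_h,q\rangle_{L^2}-F(q)$ makes $F$ cancel. The residual equals, up to sign, $\langle\omega-\omega_h,dq\rangle_{L^2}-\langle p-p_h,q\rangle_{L^2}$. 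The absolute value removes the sign, and taking the supremum over $\|q\|_H=1$ gives the equality.

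A point to check here is that $F$ is the same functional in both problems. This holds because $F$ is unchanged under refinement, as stated before \Cref{lem: stability projection}. If one uses the refined formulation \eqref{eq: Saddle-point problem T_h}, \Cref{prop: saddle point T_h} shows that the primal variables $(\omega,p)$ still satisfy the same first equation. The test space is $H\Lambda^0(M)$, so $q$ is globally continuous, and no broken terms appear.

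For the inequality, apply Cauchy--Schwarz to each term:
\[
|\langle\omega-\omega_h,dq\rangle_{L^2}|\le\|\omega-\omega_h\|_{L^2}\|dq\|_{L^2},\qquad
|\langle p-p_h,q\rangle_{L^2}|\le\|p-p_h\|_{L^2}\|q\|_{L^2}.
\]
Both $\|dq\|_{L^2}$ and $\|q\|_{L^2}$ are at most $\|q\|_H=1$, since $\|q\|_H^2=\|q\|_{L^2}^2+\|dq\|_{L^2}^2$. Summing the two bounds gives the claim.

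There is no real obstacle in this argument. The only subtle step is ensuring the cancellation of $F$ is legitimate, which requires that the exact solution satisfies \eqref{eq: variational problem1} against all $q\in H\Lambda^0(M)$ and not only against discrete $q_h$. The discrete solution itself does not satisfy this identity, which is why the residual is generally nonzero. Combined with \Cref{thm: Final convergence}, the bound shows that the residual decays like $O(h^r)$.
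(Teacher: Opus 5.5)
Your proof is correct and follows the paper's route: the equality comes from subtracting \eqref{eq: variational problem1} for the exact solution, and the bound comes from Cauchy--Schwarz. The only cosmetic difference is that you apply Cauchy--Schwarz to each term separately, while the paper applies it once on the product space, obtaining the slightly sharper intermediate bound $\bigl(\|\omega-\omega_h\|_{L^2}^2+\|p-p_h\|_{L^2}^2\bigr)^{1/2}\|q\|_H$.
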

\begin{proof}
The equality follows from \eqref{eq: variational problem1}. By the Cauchy--Schwarz inequality, the expression inside the second supremum is bounded by
\[
\bigl(\|\omega-\omega_h\|_{L^2}^2+\|p-p_h\|_{L^2}^2\bigr)^{1/2}\|q\|_H,
\]
which proves the estimate.
\end{proof}
\section{Illustrative example and numerical test}
    \subsection{Example: Hopf fibration.}
    Let \begin{equation*}
        \begin{aligned}
            \mathbb S^3  = \{ (z_0, z_1) \in \mathbb C^2 : |z_0|^2 + |z_1|^2  = 1\} && \text{and} & &\mathbb S^2 = \{ (z, s) \in \mathbb C \times \mathbb R : |z|^2 + |s|^2  = 1 \}
        \end{aligned}
    \end{equation*}
    and consider the principal $\SSS^1$-bundle 
    \[ H \colon \SSS^3\to  \SSS^2
    \]
    which is given by 
    \[ 
    H\begin{pmatrix} z_0\\z_1\end{pmatrix} =\begin{pmatrix}2z_0^*z_1 \\|z_0|^2-|z_1|^2\end{pmatrix}
    \]
    restricted to the unit sphere $\SSS^3\subset \CC^2.$
    We use spherical coordinates $(\theta, \varphi)$ on $\SSS^2$, where $\theta$ is the polar angle and $\varphi$ is the azimuthal angle.
    
    We consider $\SSS^1= \{e^{i\xi} \colon 0\le \xi <2\pi\}$ as a Lie group with the standard product and Lie algebra
    \[\lie{s}_1 = T_1\SSS^1\simeq \RR,
    \]
    with exponential map 
    \begin{align*}
        \exp_{\SSS^1}\from \RR &\to \SSS^1\\
        \exp_{\SSS^1}(\xi)&= e^{i\xi}.
    \end{align*}
    Define $\sigma_N \from \SSS^2 \setminus\{(0,0,-1)\}\to \SSS^3$ by
    \[
        \sigma_N(\theta, \varphi) = 
        \begin{pmatrix}
        \cos\left(\frac{\theta}{2}\right)\\
        \sin\left(\frac{\theta}{2}\right)e^{i\varphi}
        \end{pmatrix}
    \]
    and 
    $\sigma_S\from \SSS^2 \setminus\{(0,0,1)\}\to \SSS^3$ by
    \[
        \sigma_S(\theta, \varphi) = 
        \begin{pmatrix}
        \cos\left(\frac{\theta}{2}\right)e^{-i\varphi}\\
        \sin\left(\frac{\theta}{2}\right)
        \end{pmatrix}
    \]
    Then $\sigma_{N}$ and $\sigma_S $ are smooth sections of the Hopf fibration on their respective domains. We see that \[\sigma_N(\theta, \varphi) =\sigma_S(\theta, \varphi)e^{i\varphi}\]
    Thus, the gauge transform is $g_{SN}(\theta, \varphi)=e^{i\varphi}$.

    Its trivialized derivative is
    \[ g_{SN}^{-1}\cdot dg_{SN} = i d\varphi
    \]
    Identifying the imaginary axis with the real axis, \eqref{eq: Ujumpcondition} becomes
    \begin{equation*}
        \omega_N - \omega_S =d\varphi
    \end{equation*}
    and the Yang--Mills variational problem can be formulated as 
    \[\begin{gathered}
    \text{minimize } \int_S \|d\omega_S\|^2\vol + \int_N \|d\omega_N\|^2\vol  \\
    \text{subject to }\omega_N - \omega_S =d\varphi \text{ on } N\cap S. 
    \end{gathered}
    \]
    where $N$ and $S$ are the north and south hemispheres, $N\cap S$ is the equator, and the constraint applies to both tangential and normal parts. In the language used in this paper, we have $\mathcal T = \{ N , S \} $.

    This problem can be solved analytically \parencite[see, e.g.,][Chap.~10.1]{Gockeler_Schucker_1987}, and the solution is
    \begin{equation*}\begin{aligned}
		\omega_N&= \frac{1}{2}\left(1-\cos \theta\right)d\varphi+df\\
		\omega_S&=-\frac{1}{2}\left(1+\cos \theta \right)d\varphi +df
		\end{aligned}
		\label{eq: gensol}
	\end{equation*}
	where $f\from \SSS^2\to \RR$ is an arbitrary smooth function representing the residual gauge freedom.
    
    If we fix the gauge using \eqref{eq: YMabelian min delta omega}, we get a unique solution
    \begin{equation*}	\begin{aligned}
			\omega_N&= \frac{1}{2}\left(1-\cos \theta\right)d\varphi\\
			\omega_S&=-\frac{1}{2}\left(1+\cos \theta \right)d\varphi
		\end{aligned}
		\label{eq: solution}
	\end{equation*}
    Instead of taking $N$ and $S$ to be the northern and southern hemispheres, respectively, we may use any subsets $N$ and $S$ such that $N \subset \mathbb S^2 \setminus \{(0,0,-1)\}$, $S \subset \mathbb S^2 \setminus \{ (0,0,1) \} $, $N \cup S = \mathbb S^2$, and $N\cap S$ is a 1-dimensional submanifold. In the numerical experiments, we also test $N\cap S = \{(x + iy,s)\in \mathbb S^2 \subset \mathbb C \times \mathbb R : x = s \}$. In this case, $d\varphi$ has both tangential and normal components on $N\cap S$.
    
\begin{figure}[!htbp]
\centering
\includegraphics[width=\linewidth]{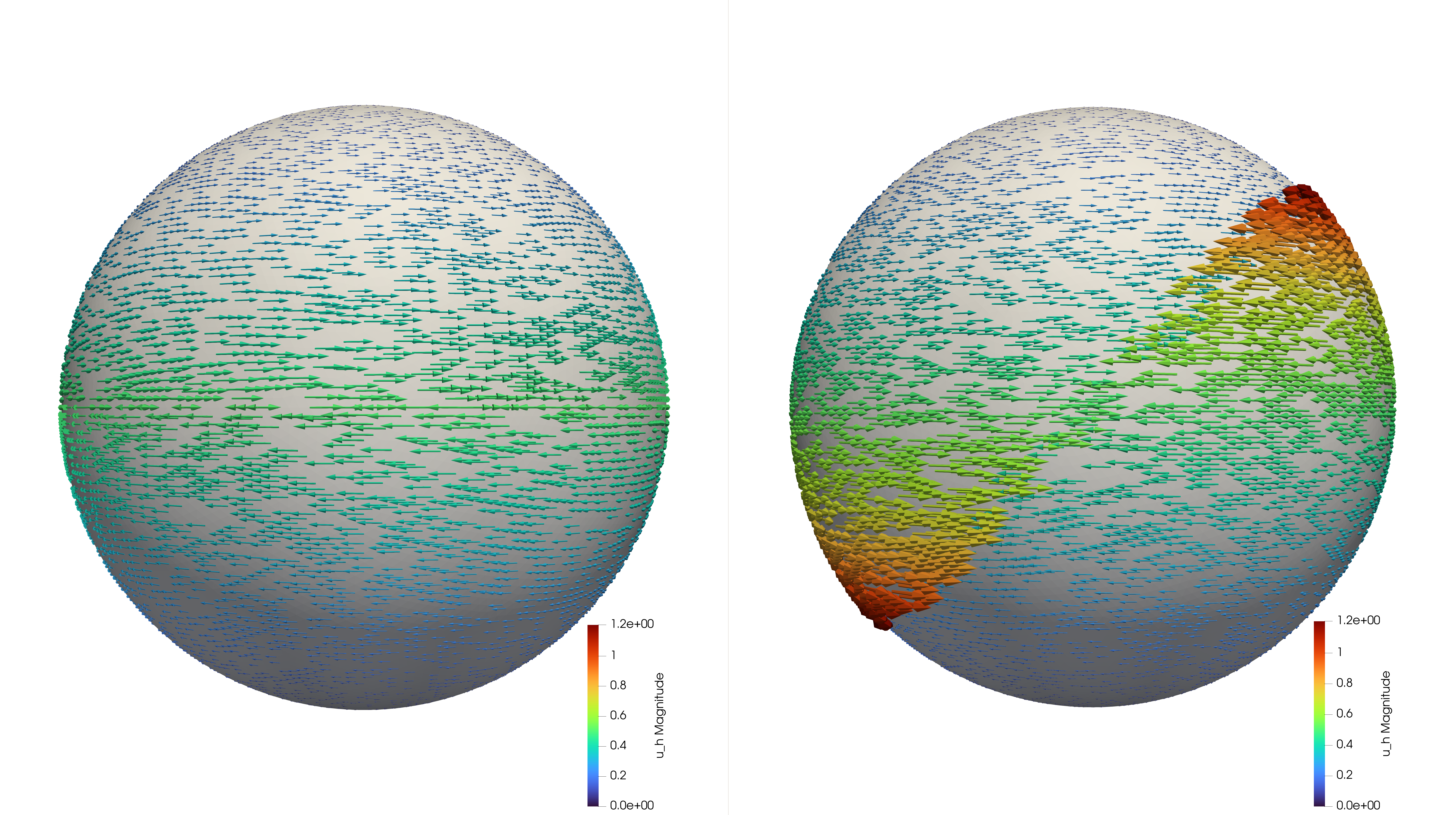} 
\caption{Visualization of $\omega_h$. Left: jump at $s=0$; right: jump at $s=x$.}
Alt text: Two spheres, one on the left side and one on the right side. On both sides, there is a vector field on the sphere, drawn with arrows. On the left side there is a jump discontinuity of the vector field at $s = 0$, and on the right side at $s = x$. 
\label{fig:hopf-vector}
\end{figure}

\begin{figure}[!htbp]
\centering
\includegraphics[width=\linewidth]{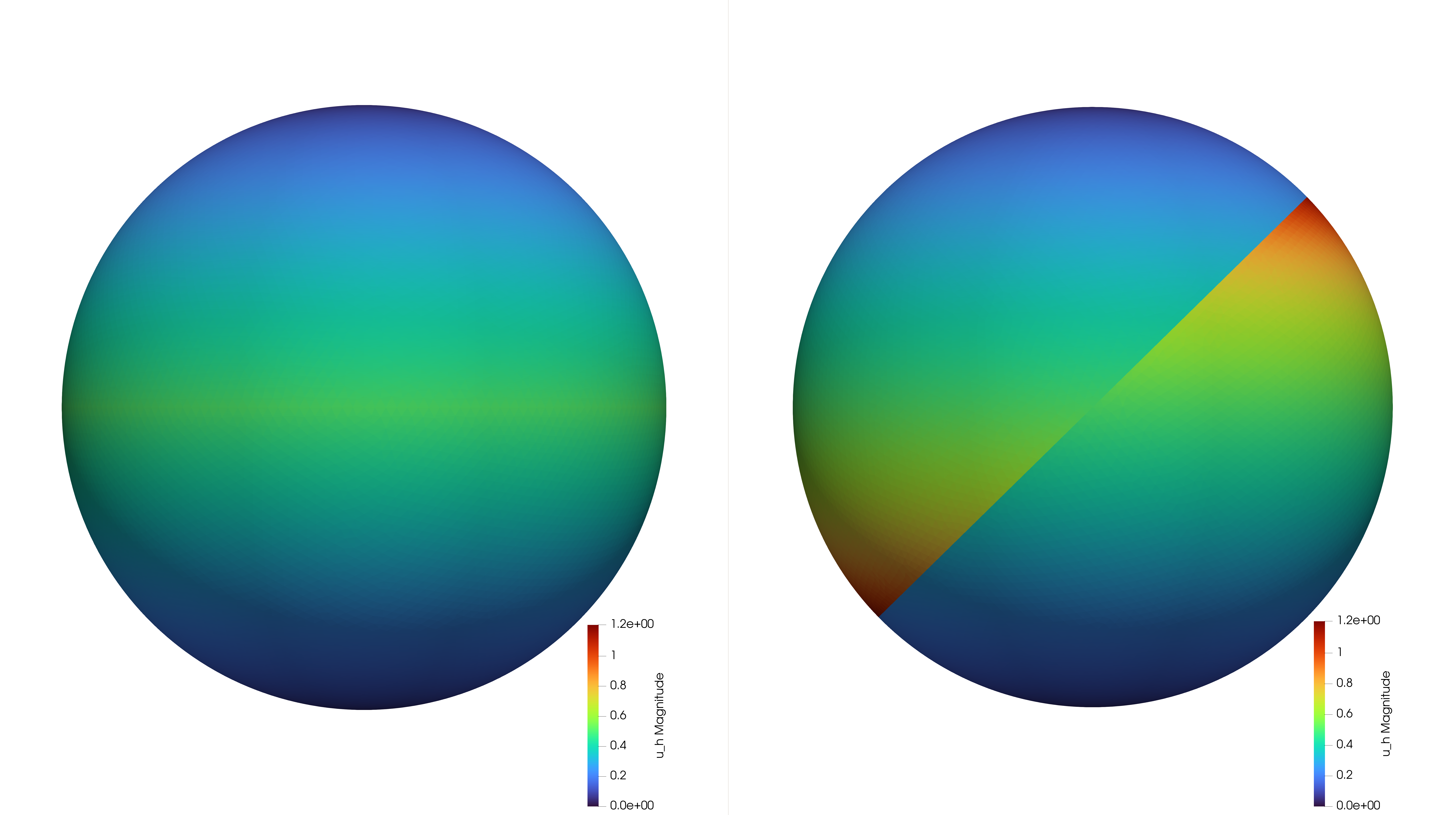} 
\caption{Magnitude of $\omega_h$. Left: jump at $s=0$; right: jump at $s=x$.}
Alt text: Two spheres, one on the left side and one on the right side. On both sides, the magnitude of the vector field from the previous figure is drawn with color. On the left side there is a jump discontinuity of the vector field at $s = 0$, and on the right side at $s = x$. 
\label{fig:hopf-magnitude}
\end{figure}
\subsection{Numerical experiment.}
We use the Yang--Mills connection on the Hopf bundle as our test problem. The implementation uses the Python library Firedrake \parencite{FiredrakeUserManual} and meshes generated with Gmsh \parencite{geuzaine2009gmsh}, which give a piecewise linear approximation of the sphere.

The spaces used are

\vspace{1ex}
\begin{tabular}{c|c}
\hline
$\mathcal P_r ^-\Lambda^0(\mathcal T_h)$  & Continuous Galerkin $(\mathtt{CG})$\\
$D\mathcal P_r ^-\Lambda^1(\mathcal T_h)$  & Broken Nédélec(first kind)  $(\mathtt{BrokenElement, N1curl})$\\
$V_h\simeq K_h^\star$ & Facet Nédélec (first kind) $(\mathtt{FacetElement, N1curl})$\\
\hline
\end{tabular}
\vspace{1ex}

Using a linear approximation of the sphere together with Nédélec spaces for 1-forms introduces a geometric variational crime \parencite{holst2012geometric}; we do not analyze the resulting error here.
The Facet Nédélec space for $K_h^\star$ requires an explanation, as the resulting spaces are the traces of
$D\mathcal P_r ^-\Lambda^1(\mathcal T_h)$, while $K_h^\star \subset H^\star \Lambda^2(\mathcal{T}_h)$.

By \Cref{prop: jumps polynomial}, we can write $\ljump \tau_h, \mu_h\rjump_{\mathcal T_h}= \langle Q_h\tau_h,R_h\mu_h\rangle_V$ for $\tau_h\in  D\mathcal P_r ^-\Lambda^1(\mathcal T_h)$, $\mu_h \in K_h^\star$, where $Q_h \tau_h$ and $R_h \mu_h$ are facet one-forms.

In the implementation, we use $R_h(K_h^\star)=V_h$ as the finite element space for the Lagrange multipliers and again denote the image multiplier by $\lambda_h$.
Finally, we assume that the jumps are specified by the vector proxy
$(g^{-1}dg)^\sharp$ on each facet.
That is, we solve for
$\omega_h\in D\mathcal P_r^-\Lambda^1(\mathcal T_h)$,
$p_h\in\mathcal P_r^-\Lambda^0(\mathcal T_h)$, and
$\lambda_h\in V_h$ such that
\begin{subequations}
\begin{align*}
\langle \omega_h,dq_h\rangle_{L^2}
-\langle p_h,q_h\rangle_{L^2}
&=\Phi_h^\normal(q_h)
&&\forall q_h\in\mathcal P_r^-\Lambda^0(\mathcal T_h),
\\
\langle d^{\mathcal T_h}\omega_h,d^{\mathcal T_h}\tau_h\rangle_{L^2}
+\langle dp_h,\tau_h\rangle_{L^2}
+\langle Q_h\tau_h,\lambda_h\rangle_{L^2(\mathcal F_h)}
&=0
&&\forall\tau_h\in D\mathcal P_r^-\Lambda^1(\mathcal T_h),
\\
\langle Q_h\omega_h,\mu_h\rangle_{L^2(\mathcal F_h)}
&=\Gamma_h^\tangent(\mu_h)
&&\forall\mu_h\in V_h.
\end{align*}
\end{subequations}
Here $Q_h\tau_h|_f=\ljump\tau_h\rjump_f$, and
$\langle\cdot,\cdot\rangle_{L^2(\mathcal F_h)}$ denotes the $L^2$-inner product
on the skeleton of the triangulation. Moreover,
\[
\begin{aligned}
\Phi_h^\normal(q_h)
&=
\left\langle
q_h|_{\mathcal F_h},\mathbf n\cdot(g^{-1}dg)^\sharp
\right\rangle_{L^2(\mathcal F_h)},\\
\Gamma_h^\tangent(\mu_h)
&=
\left\langle Q_h\Pi_{\mathcal T}^h\gamma,
\mu_h\right\rangle_{L^2(\mathcal F_h)}.
\end{aligned}
\]
In the implementation, we realize $\Gamma_h^\tangent$ directly from the vector proxy $(g^{-1}dg)^\sharp$ rather than constructing $\gamma$ explicitly. Since $\gamma$
has the smoothness required for $\Pi_{\mathcal T}^h$ and represents the prescribed tangential jump, the trace property of the canonical
interpolator shows that this gives the same discrete
tangential jump as $Q_h\Pi_{\mathcal T}^h\gamma$. For the Hopf example,
$g^{-1}dg=d\varphi$ after identifying the Lie algebra with $\mathbb R$,
so the prescribed jump field is known explicitly.
The numerical experiments exhibit linear convergence, including with higher-order polynomial spaces. The rate may be limited by the piecewise linear geometry, and is predicted by \parencite[Ex. 4.6]{holst2012geometric}. The function $p_h$ is zero up to numerical precision. The convergence data are summarized in \Cref{fig:hopf-convergence,tab:hopf-convergence-zero,tab:hopf-convergence-r}. In the convergence data, we have strictly speaking, interpolated the true solutions into $\mathcal P^-_{3}$-spaces(where we can handle discontinuity) before comparing with the numerical solution. 

\begin{figure}[!htbp]
\centering
\begin{tikzpicture}
\begin{axis}[
    width=0.82\linewidth,
    height=0.50\linewidth,
    x dir=reverse,
    xlabel={$\log_{10}(h)$},
    ylabel={$\log_{10}\left(\|\omega-\omega_h\|_{H}/\|\omega\|_{H}\right)$},
    grid=both,
    grid style={dashed, gray!50},
    legend pos=north west,
]
\addplot[
    blue,
    mark=*,
    mark size=2pt,
    line width=1.5pt,
] table[x=h, y=relative_error, col sep=comma] {error_convergence_data_w_0_radians.csv};
\addlegendentry{$s=0$}
\addplot[
    red,
    mark=x,
    mark size=2pt,
    line width=1.5pt,
] table[x=h, y=relative_error, col sep=comma] {error_convergence_data_w_0.7853981633974483_radians.csv};
\addlegendentry{$s=x$}
\end{axis}
\end{tikzpicture}
\caption{Relative $H$-norm error in $\omega_h$ for the equatorial interface $s=0$ and the rotated interface $s=x$.}
Alt text: Log-log plot of the relative error between the true solution, and the numerical solution. The x-axis shows the logarithm of h, and the y-axis the relative error. There are two graphs, one for the solution with s = 0, and one with s=x. The graphs are more or less straight, and parallel. 
\label{fig:hopf-convergence}
\end{figure}

\pgfplotstableread[col sep=comma]{"error_convergence_data_w_0_radians.csv"}\omegaconvergencedata
\pgfplotstableread[col sep=comma]{"error_convergence_data_p_0_radians.csv"}\pconvergencedata

\pgfplotstablegetrowsof{\pconvergencedata}
\pgfmathtruncatemacro{\numprows}{\pgfplotsretval}

\pgfplotstablecreatecol[
    create col/assign/.code={
        \pgfmathtruncatemacro{\currentrow}{\pgfplotstablerow}
        \ifnum\currentrow<\numprows
            \pgfplotstablegetelem{\currentrow}{relative_error}\of{\pconvergencedata}
            \pgfkeyslet{/pgfplots/table/create col/next content}\pgfplotsretval
        \else
            \pgfkeyssetvalue{/pgfplots/table/create col/next content}{--}
        \fi
    }
]{p_error}\omegaconvergencedata

\begin{table}[htbp]
\centering
\pgfplotstabletypeset[
    columns={h,relative_error,p_error},
    columns/h/.style={
        column name={$ \log_{10}(h) $},
        fixed,
        fixed zerofill,
        precision=1
    },
    columns/relative_error/.style={
        column name={$ \log_{10}(\|\omega-\omega_h\|_H/\|\omega\|_H) $},
        fixed,
        fixed zerofill,
        precision=4
    },
    columns/p_error/.style={
        column name={$ \log_{10}(\|p-p_h\|_H) $},
        fixed,
        fixed zerofill,
        precision=4,
    },
    every head row/.style={
        before row=\hline,
        after row=\hline
    },
    every last row/.style={
        after row=\hline
    }
]\omegaconvergencedata
\caption{Errors for the computed connection form and the auxiliary variable $p$ under mesh refinement, with jump at $s=0$}
\label{tab:hopf-convergence-zero}
\end{table}

\pgfplotstableread[col sep=comma]{"error_convergence_data_w_0.7853981633974483_radians.csv"}\omegaconvergencedata
\pgfplotstableread[col sep=comma]{"error_convergence_data_p_0.7853981633974483_radians.csv"}\pconvergencedata

\pgfplotstablegetrowsof{\pconvergencedata}
\pgfmathtruncatemacro{\numprows}{\pgfplotsretval}

\pgfplotstablecreatecol[
    create col/assign/.code={
        \pgfmathtruncatemacro{\currentrow}{\pgfplotstablerow}
        \ifnum\currentrow<\numprows
            \pgfplotstablegetelem{\currentrow}{relative_error}\of{\pconvergencedata}
            \pgfkeyslet{/pgfplots/table/create col/next content}\pgfplotsretval
        \else
            \pgfkeyssetvalue{/pgfplots/table/create col/next content}{--}
        \fi
    }
]{p_error}\omegaconvergencedata

\begin{table}[htbp]
\centering
\pgfplotstabletypeset[
    columns={h,relative_error,p_error},
    columns/h/.style={
        column name={$ \log_{10}(h) $},
        fixed,
        fixed zerofill,
        precision=1
    },
    columns/relative_error/.style={
        column name={$ \log_{10}(\|\omega-\omega_h\|_H/\|\omega\|_H) $},
        fixed,
        fixed zerofill,
        precision=4
    },
    columns/p_error/.style={
        column name={$ \log_{10}(\|p-p_h\|_H) $},
        fixed,
        fixed zerofill,
        precision=4,
    },
    every head row/.style={
        before row=\hline,
        after row=\hline
    },
    every last row/.style={
        after row=\hline
    }
]\omegaconvergencedata
\caption{Errors for the computed connection form and the auxiliary variable $p$ under mesh refinement, with jump at $s=x$.}
\label{tab:hopf-convergence-r}
\end{table}

\clearpage
\section{Outlook and further work}
    A natural next step is to extend the methodology to the non-abelian case. This poses two challenges:

    \begin{enumerate}
    \item The Yang--Mills action functional 
    \[
    \mathcal{S}(\omega)= \frac12\sum_{T_i} \int_{T_i} \left\|d\omega_i + \frac12\left[\omega_i \wedge \omega_i\right]\right\|^2\vol
    \]
    becomes quartic
    
    \item The gauge conditions
    \[
    \sigma_\beta^\ast \omega = \Ad_ {g_{\alpha \beta }^{-1} } \sigma_\alpha ^\ast \omega +g_{\alpha \beta }^\ast \theta \quad \text{ on } U_\alpha  \cap U_\beta
    \]
    are not simple differences. The operator $\Ad_{g_{\alpha\beta}^{-1}}$ needs to be treated carefully.
    \end{enumerate}
    The variational problem becomes a nonlinear saddle-point problem. One possible approach is to impose each jump condition symmetrically from both sides and then solve the nonlinear saddle-point problem iteratively.

    A further generalization would be to consider a manifold equipped with a Lorentzian metric instead of a Riemannian one. In this case, even the linear problem becomes more challenging, since the bilinear form $\int_\Omega \langle d\omega, d\omega\rangle\vol$ is indefinite.

\printbibliography

\end{document}